\documentclass[11pt]{amsart}
\usepackage{amssymb,mathrsfs,graphicx,enumerate}
\usepackage{amsmath,amsfonts,amssymb,amscd,amsthm,bbm}
\usepackage[retainorgcmds]{IEEEtrantools}
\usepackage{colortbl}
\usepackage{caption}
\usepackage{subcaption}
\usepackage{tikz}
\allowdisplaybreaks

\topmargin-0.1in \textwidth6.in \textheight8.5in \oddsidemargin0in
\evensidemargin0in
\title[Kuramoto models via mean field dynamics]{Mean field Kuramoto models on graphs}

\author[Li]{Wuchen Li}
\address[Wuchen Li]{\newline Department of Mathematics \newline University of South Carolina, Columbia, SC 29208, USA}
\email{wuchen@mailbox.sc.edu}

\author[Park]{Hansol Park}
\address[Hansol Park]{\newline Department of Mathematics \newline Simon Fraser University, 8888 University Dr, Burnaby, BC V5A 1S6, Canada}
\email{hansol\_park@sfu.ca}
\email{hansol960612@snu.ac.kr}

\newtheorem{theorem}{Theorem}[section]
\newtheorem{lemma}{Lemma}[section]

\newtheorem{proposition}{Proposition}[section]
\newtheorem{remark}{Remark}[section]
\newtheorem{example}{Example}[section]

\newcommand{\bbr}{\mathbb R}

\newcommand{\bbs}{\mathbb S}

\makeatletter
\@namedef{subjclassname@2020}{\textup{2020} Mathematics Subject Classification}
\makeatother

\begin{document}

\date{\today}

\subjclass[2020]{34D06, 49Q22} \keywords{ Kuramoto model; Emergent; Optimal transport; Schr{\"o}dinger bridge problem; Graph; Hopf-Cole transformation on graphs.}

\thanks{\textbf{Acknowledgment.} W. Li is supported by a start up funding in University of South Carolina and NSF RTG: 2038080. And H. Park is supported by Pacific Institute for the Mathematical Science(PIMS), Canada postdoctoral fellowship.}

\begin{abstract}
One of a classical synchronization model is the Kuramoto model. We propose both first and second order Kuramoto dynamical models on graphs using discrete optimal transport dynamics. We analyze the synchronization behaviors for some examples of Kuramoto models on graphs. We also provide a generalized Hopf-Cole transformation for discrete optimal transport systems. Focus on the two points graph, we derive analytical formulas of the Kuramoto dynamics with various potential induced from entropy functionals. Several numerical examples for the Kuramoto model on general graphs are presented.

\end{abstract}

\maketitle \centerline{\date}

%\tableofcontents

\section{Introduction}\label{sec:1}
\setcounter{equation}{0}
Synchronization phenomena of interacting particles plays essential roles in physics, biology, social systems, and artificial intelligence (AI). This field of research have been studied intensively by two pioneers: A. Winfree \cite{Winf} and Y. Kuramoto \cite{Ku1, Ku2}. One of the famous synchronization model is the Kuramoto model. This model is a synchronization model on the unit circle given as follows:
\begin{align}\label{A-1}
\begin{cases}
\displaystyle\dot{\varphi}_i=\nu_i+\frac{\kappa}{N}\sum_{k=1}^N\sin(\varphi_k-\varphi_i)\quad\forall~t>0,\\
\varphi_i(0)=\varphi_i^0\in\bbr\quad \forall~i=1, 2, \dots, N.
\end{cases}
\end{align}
The emergent behavior of system \eqref{A-1} has been studied in \cite{H-H-K, H-L-X}. For the identical case (i.e. $\nu_i=0$ for all $i$), we can prove that the difference between phases $\varphi_i-\varphi_j$ converges to $2\pi m$ exponentially for some $m\in \mathbb{Z}$ under suitable initial conditions. If the number of particles $N$ tends to infinity, we have the following mean field dynamical model of the identical case of \eqref{A-1}:
\begin{align}\label{A-2}
\begin{cases}
\partial_t \rho_t(\varphi)+\partial_\varphi(\rho_t(\varphi)L[\rho_t](\varphi))=0,\\
\displaystyle L[\rho_t](\varphi)=-\kappa\int_{\mathbb{S}^1}\sin(\varphi-\varphi_*)\rho_t(\varphi_*)d\varphi_*,\quad\varphi\in\bbr,
\end{cases}
\end{align}
where $\rho_0$ is the initial distribution. Similarly, the emergent behavior of equation \eqref{A-2} is given as 
\begin{align}\label{A-3}
\rho_t\to\delta_{\varphi^\infty}
\end{align}
in \cite{H-K-M-P}, where $\delta_{\varphi^\infty}$ is a Dirac delta function concentrated at a constant point $\varphi^\infty$. We can interpret it as the probability density function defined on the unit circle is concentrating into one point $\varphi^\infty$ as time goes to infinity. In practice, a large population of particles are often settled on a discrete spatial domain; e.g. a simple finite graph. A natural question arise: 

{\em (Q) What is the synchronization phenomena of a large population of interacting particles on a discrete domain, such as finite graphs?  }

This paper provides a proper form of a discretized system \eqref{A-2} on finite graphs using optimal transport dynamics \cite{Vi, chow2012, M}.  We show that if we discretize system \eqref{A-2} onto a discrete graph with $n$ vertices, then the corresponding emergent behavior to \eqref{A-3} can be expressed as follows (up to permutation):
\begin{align*}\label{A-4}
(\rho_1(t), \rho_2(t)\cdots, \rho_n(t))\to(1, 0, \cdots, 0)\quad\text{as}\quad t\to\infty.
\end{align*}
To study discrete dynamical system \eqref{A-2}, we use the tools of optimal transport dynamics on graphs; see
%Optimal transport dynamics on graphs have been studied in 
\cite{L-L, L-M, Mass, Li-entropy}. In Section \ref{sec:2.2}, we express system \eqref{A-2} as both a gradient flow system and a Hamiltonian system. From this fact, we have the following first order model on the discrete model as follows:
\[
\begin{cases}
\displaystyle\frac{d\rho_j}{dt}=\kappa\sum_{k=1}^n\theta_{jk}(\rho_j-\rho_k),\quad t>0,\\
\rho_j(0)=\rho_j^0, \quad j=1, 2, \cdots, N.
\end{cases}
\]
Also, we can follow the discretization introduced in \cite{C-L-Z, CLMZ} to obtain the discretized system of \eqref{A-2} below:
\begin{align*}\label{A-5}
\begin{cases}
\displaystyle \frac{d\rho_j}{dt}-\sum_{l\in N_j}\omega_{jl}(S_j-S_l)\theta_{jl}=0,\vspace{0.2cm}\\
\displaystyle \frac{dS_j}{dt}+\frac{1}{2}\sum_{l\in N_j}\omega_{jl}(S_j-S_l)^2\frac{\partial \theta_{jl}}{\partial \rho_j}=\frac{\partial}{\partial\rho_j}\left(\frac{1}{2}\sum_{(k, l)\in E}\frac{\omega_{kl}}{2}(\partial_k\mathcal{F}-\partial_l\mathcal{F})^2\theta_{kl}\right),\vspace{0.2cm}\\
\displaystyle\rho_j(0)=\rho_j^0,\quad S_j(0)=S_j^0,\quad S_j(0)=-\frac{\partial\mathcal{F}(\rho)}{\partial\rho_j}\Bigg|_{\rho=\rho(0)},
\end{cases}
\end{align*}
where $\mathcal{F}(\rho)$ is a potential functional defined on a graph, $\rho_j$ is the probability density on node $j$, and $S_j$ is the potential function on node $j$. Also, $\theta_{ij}$ represents the probability weight of the edge connecting $i^{th}$ and $j^{th}$ vertices. For example, $\theta_{ij}=\frac{1}{2}(\rho_i+\rho_j)$ can be picked, however, in this paper we consider the general probability weight given as $\theta_{ij}=\theta(\rho_i, \rho_j)$ for some function $\theta$. We study this discretized model in this paper intensively.\\

The main results of this paper are three-fold. First, we develop first and second order Kuramoto model on graphs following discrete optimal transport dynamics. We also provide emergent behaviors (synchronization behaviors) of the Kuramoto model on graphs. Second, we provide the Hopf-Cole transformation for discrete optimal transport system. Third, we derive analytical formulas of the Kuramoto dynamics on the two points graph with various entropy functionals.\\

This paper is organized as follows. In Section \ref{sec:2}, we review some aggregation models, optimal transport dynamics on discrete graph, and the generalized Hopf-Cole transformation. We construct first and second order Kuramoto model on finite graphs and study their emergent behaviors in Section \ref{sec:3}. We also apply the generalized Hopf-Cole transformation on discrete graph in Subsection \ref{sec:3.4}.  In Section \ref{sec:4}, we provide analytical formulas of the Kuramoto dynamics on the two points graph. Several numerical examples are presented in Section \ref{sec:5}. 
%Finally, Section \ref{sec:6} is devoted for the brief summary on this paper.

\section{Review}\label{sec:2}
\setcounter{equation}{0}

In this section, we provide some reviews on concepts and results on aggregation models and optimal transport equations.

For the intuition, consider a case of classical mechanics on $\mathbb{R}^d$.
We first consider a gradient flow system
%Let assume that a classical system on $\bbr^d$ is given as
\[
\begin{cases}
\dot{x}=-\nabla U(x),\\
x(0)=x^0\in\bbr^d.
\end{cases}
\]
%Taking a derivative in time, we have
%\begin{equation*}
%\ddot x=-\frac{d}{dt}\nabla U=-\nabla^2 U\dot x=-\frac{}{2}    
%\end{equation*}
We second study a Hamiltonian system. Suppose the Hamiltonian is 
\[
H(x, p)=\frac{1}{2}p^2-\frac{1}{2}|\nabla U(x)|^2.
\]
The Hamiltonian system satisfies
\begin{align}\label{D-6}
\begin{cases}
\displaystyle\dot{x}=p,\quad \dot{p}=-\frac{1}{2}\nabla|\nabla U(x)|^2,\\
x(0)=x^0,\quad p(0)=p^0.
\end{cases}
\end{align}
If we impose the following assumption:
\[
p^0=-\nabla U(x^0),
\]
then this Hamiltonian system can be reduced to $\dot{x}=-\nabla U(x)$. See Lemma \ref{LA.1} for detail proof. In this sense, any gradient flow system can be expressed as a particular case of Hamiltonian system. 
%%We refer the gradient flow system the first order model and the 
\subsection{Review on aggregation models}\label{sec:2.1}
In this subsection, we review some gradient flow type aggregation models. The Kuramoto model is the simplest synchronization model defined on $\bbs^1$. Consider 
\begin{align}\label{B-1}
\begin{cases}
\displaystyle\dot{\varphi}_i=\nu_i+\frac{\kappa}{N}\sum_{k=1}^N\sin(\varphi_k-\varphi_i),\\
\varphi_i(0)=\varphi_i^0,\quad \forall~ i\in\mathcal{N}:=\{1, 2, \cdots, N\},
\end{cases}
\end{align}
where $N$ is the number of particles, $\nu_i$ is the natural frequency of the $i^{th}$ particle, $\kappa$ is the coupling strength, and $\varphi_i$ is the phase of the $i^{th}$ particle. If the natural frequency is identical, i.e., $\nu_i\equiv\nu$ for all $i\in \mathcal{N}$, then solution of \eqref{B-1} $\Phi:=\{\varphi_i\}_{i=1}^N$ exhibits the complete synchronization:
\[
\lim_{t\to\infty}(e^{\mathrm{i}\varphi_i}-e^{\mathrm{i}\varphi_j})=0.
\]
Also, if the coupling strength $\kappa$ is large enough, then 
\[
\lim_{t\to\infty}(\dot{\varphi}_i-\dot{\varphi}_j)=0.
\]
The Kuramoto model can be expressed as a gradient flow system:
\begin{align*}
\begin{cases}
\dot{\Phi}=-\nabla_{\Phi}\mathcal{V},\quad t>0,\\
\displaystyle\mathcal{V}(\Phi)=-\sum_{i=1}^N\nu_i\varphi_i-\frac{\kappa}{2N}\sum_{i, j=1}^N\cos(\varphi_i-\varphi_j).
\end{cases}
\end{align*}

When $N$ goes to infinity, we can obtain the mean field version of system \eqref{B-1}. The mean field version of the identical Kuramoto model($\nu_i\equiv0$) is the Kuramoto-Sakaguchi model \cite{Sa} given as follows:
\begin{align}\label{B-2}
\begin{cases}
\partial_t \rho_t(\varphi)+\partial_\theta(\rho_t(\varphi)L[\rho_t](\varphi))=0,\\
\displaystyle L[\rho_t](\varphi)=-\kappa\int_{\mathbb{S}^1}\sin(\varphi-\varphi_*)\rho_t(\varphi_*)d\varphi_*\quad\forall~\varphi\in\bbr,
\end{cases}
\end{align}
where $\rho_0$ is an initial probability distribution function defined on $\mathbb{S}^1$.\\

A generalization of system \eqref{B-1} to general sphere $\bbs^{d-1}$ with $d\geq2$ is introduced in \cite{Lo1}:
\[
\begin{cases}
\displaystyle\dot{x}_i=\Omega_i x_i+\frac{\kappa}{N}\sum_{k=1}^N\left(x_k-\langle x_i, x_k\rangle x_i\right),\\
x_i(0)=x_i(0)\in\bbs^{d-1}\subset \bbr^d,\quad\forall~i\in\mathcal{N},
\end{cases}
\]
where $\Omega_i$ is a skew-symmetric matrix of size $d\times d$. We call this system {\it the swarm sphere model}. The mean field version of the identical swarm sphere model ($\Omega_i\equiv0$) is given below:
\begin{align}\label{B-3}
\begin{cases}
\partial_t\rho_t(x)+\nabla_{\bbs^d}\cdot(\rho_t(x)L[\rho_t](x))=0,\\
\displaystyle L[\rho](x)=\kappa\int_{\bbs^d} (y-\langle x, y\rangle x)\rho(y)dy\quad\forall~x\in\bbs^d,
\end{cases}
\end{align}
where $\rho_0$ is the initial distribution on $\bbs^d$. We will use this system \eqref{B-3} to construct the consensus model on a regular $n$-simplex.

\subsection{Optimal transport dynamics.} \label{sec:2.2}
In this subsection, we present some previous results on optimal transport dynamics.  We first consider a gradient flow system on Wasserstein space. We then demonstrate that any Wasserstein gradient flows can be expressed as Hamiltonian flows.

Consider the following Wasserstein gradient flow system:  
\begin{align}\label{D-1}
\partial_t\rho_t=-\mathrm{grad}_W\mathcal{F}(\rho_t)=\mathrm{div}\left(\rho_t\nabla\left(\frac{\delta\mathcal{F}(\rho_t)}{\delta\rho_t}\right)\right),
\end{align}
where the initial distribution is $\rho_0$, $\mathcal{F}$ is a given 
functional, $\frac{\delta}{\delta\rho}$ is the $L^2$ first variation operator w.r.t. $\rho$, and $\mathrm{grad}_W$ is a gradient operator in Wasserstein space:
\[
\mathrm{grad}_W\mathcal{F}[\rho]=-\mathrm{div}\left(\rho \nabla\frac{\delta\mathcal{F}[\rho]}{\delta\rho}\right).
\]

Consider the Wasserstein Hamiltonian system
\begin{align}\label{D-1-2}
\begin{cases}
\partial_t\rho_t=\partial_S\mathcal{H}(\rho_t, S_t),\\
\partial_t S_t=-\partial_\rho\mathcal{H}(\rho_t, S_t),
\end{cases}
\end{align}
where the Hamiltonian is defined as
\[
\mathcal{H}(\rho, S)=\frac{1}{2}\int\Big(|\nabla S(x)|^2-|\nabla \delta \mathcal{F}(\rho)|^2\Big)\rho(x)dx.
\]
In other words, 
\begin{align}\label{D-2}
\begin{cases}
\partial_t\rho_t+\mathrm{div}(\rho_t\nabla S_t)=0,\\
\displaystyle\partial_t S_t+\frac{1}{2}|\nabla S_t|^2=\frac{\delta}{\delta\rho_t}\left(\frac{1}{2}\int\left|\nabla \left(\frac{\delta\mathcal{F}(\rho_t)}{\delta\rho_t}\right)\right|^2\rho_t(x)dx\right).
\end{cases}
\end{align}
If the initial data $(\rho_0, S_0)$ satisfies
\begin{align}\label{D-2-1}
S_0=-\frac{\delta\mathcal{F}(\rho_0)}{\delta\rho_0},
\end{align}
then system \eqref{D-2} can be reduced to \eqref{D-1}; see Lemma \ref{LA.2} for the detail proof. So, we conclude that any Wasserstein gradient flows can also be expressed as particular Hamiltonian systems.\\

\subsection{Generalized Hopf-Cole transformation}\label{sec:2.3}
In the remained part of this section, we review generalized Hopf-Cole transformations \cite{C-C-T,L-L}. It is to rewrite the Hamiltonian system in term of entropy functions. 
%Hopf-Cole transform \cite{L-L} of system \eqref{D-2} given as 
Define a pair of functions $(\eta, \eta^*)$, such that 
\begin{align*}\label{D-3}
\begin{cases}
\delta\mathcal{F}(\rho)=\delta\mathcal{F}(\eta)+\delta\mathcal{F}(\eta^*),\\
S=\delta\mathcal{F}(\eta)-\delta\mathcal{F}(\eta^*).
\end{cases}
\end{align*}
%From the previous research on the generalized Hopf-Cole transform, we know that
The Hamiltonian system \eqref{D-1-2} in term of $(\eta, \hat\eta)$ satisfies 
\[
\begin{cases}
\partial_t\eta_t=\sigma(\eta_t, \eta_t^*)\partial_{\eta^*}\mathcal{K}(\eta_t, \eta_t^*),\\
\partial_t\eta_t^*=-\sigma(\eta_t^*, \eta)\partial_{\eta}\mathcal{K}(\eta_t, \eta_t^*),
\end{cases}
\]
where $\mathcal{K}(\eta, \eta^*)=\mathcal{H}(\rho, S)$ and 
\[
\sigma(\eta, \eta^*)(x, w)=-\frac{1}{2}\iint[\delta^2\mathcal{F}(\eta)]^{-1}(x, y)\delta^2\mathcal{F}(\rho)(y, z)[\delta^2\mathcal{F}(\eta^*)]^{-1}(z, w)dydz.
\]
This transformation can be not well-defined when the inverse of $\delta\mathcal{F}$ is not well-defined. One can also set new variables $\xi$ and $\xi^*$, such that
\begin{align}\label{D-3-0}
\xi=\delta\mathcal{F}(\eta),\quad \xi^*=\delta\mathcal{F}(\eta^*).
\end{align}
Then, the Hamiltonian system \eqref{D-2-1} in term of $(\xi,\xi^*)$ satisfies 
\begin{align}\label{D-3-1}
\begin{cases}
\partial_t\xi(x)\displaystyle=\nabla\xi(x)\cdot\nabla\xi^*(x)-\int[\delta^2\mathcal{F}](x, u)\nabla\cdot(\rho(u)\nabla \xi(u))du,\\
\partial_t\xi^*(x)\displaystyle=-\nabla\xi(x)\cdot\nabla\xi^*(x)+\int[\delta^2\mathcal{F}](x, u)\nabla\cdot(\rho(u)\nabla \xi^*(u))du.
\end{cases}
\end{align}
For the detail proof, we refer \cite{C-C-T}. In this paper, we perform the discretized analog of the change of variable \eqref{D-3-1} in Section \ref{sec:3.4}.

\subsection{Aggregation models via optimal transport dynamics}
Actually, many kinetic aggregation models can be expressed as Wasserstein gradient flow system \eqref{D-1}. In detail, the kinetic Kuramoto model \eqref{B-2} satisfies
\begin{align*}
\begin{cases}
\displaystyle\partial_t\rho_t(\theta)=\partial_\theta\left(\rho_t(\theta)\partial_{\theta}\left(\frac{\delta}{\delta\rho_t(\theta)}\mathcal{V}(\rho_t)\right)\right)=-\mathrm{grad}_W(\mathcal{V}(\rho_t)),\\
\displaystyle\mathcal{V}(\rho)=-\frac{\kappa}{2}\iint \cos(\theta_*-\theta_{**})\rho(\theta_*)\rho(\theta_{**})d\theta_*d\theta_{**}.
\end{cases}
\end{align*}
Similarly, the kinetic swarm sphere model \eqref{B-3} forms
\begin{align}\label{D-2-2}
\begin{cases}
\partial_t\rho_t(x)=\nabla_{\bbs^d}\cdot\Big(\rho_t(x)\nabla_{\bbs^d}\frac{\delta}{\delta\rho_t}\mathcal{V}[\rho_t]\Big)=-\mathrm{grad}_W \mathcal{V}[\rho_t](x),\\
\displaystyle\mathcal{V}[\rho]=\frac{\kappa}{2}\iint_{(\bbs^d)^2} \|x-y\|^2\rho(x)\rho(y)dxdy.
\end{cases}
\end{align}
%This is a key between kinetic aggregation models and the optimal transport equation.\\

 Recall the kinetic swarm sphere model is given as \eqref{B-3} and its gradient flow formulation can be expressed as \eqref{D-2-2}. Since system \eqref{D-2-2} is a special case of \eqref{D-1}, we can express this system as a Hamiltonian flow formulation using \eqref{D-2} as follows:
\begin{align}\label{F-2-1}
\begin{cases}
\displaystyle \partial_t\rho_t+\mathrm{div}(\rho_t\nabla S_t)=0,\\
\displaystyle \partial_t S_t+\frac{1}{2}|\nabla S_t|^2=\frac{\delta}{\delta\rho_t}\left(\frac{1}{2}\int \left|\nabla\left(\frac{\delta\mathcal{V}(\rho_t)}{\delta\rho_t}\right)\right|^2\rho_t(x)dx\right),
\end{cases}
\end{align}
where $\mathcal{V}$ is defined in \eqref{D-2-2}.

\section{Main results: Kuramoto models on graphs}\label{sec:3}
\setcounter{equation}{0}

In section \ref{sec:3}, we present the main result of this paper.

\subsection{Modeling of synchronization model on discrete graph}\label{sec:3.1}
 We first connect aggregation models with optimal transport dynamics. %{\color{red} Here is a organization. 1. Define a discrete graph setting. 2. Define the first order model on graph. 3.Define the second order model.}

 Let the regular $n$-simplex be given on $\bbs^{n-2}\subset\bbr^{n-1}$($n\geq2$). Let $n$ points $x_1, x_2, \cdots, x_n$ satisfy 
\[
\{x_1, \cdots, x_n\}\subset \bbs^{n-2},\quad \|x_i-x_j\|=L_n\quad\forall~i\neq j
\]
for some positive constant $L_n$. Actually, this constant can be calculated for all $n\geq2$. e.g., $n=3$, it is an equilateral triangle with side length $L_3=\sqrt{3}$. We consider a graph $G_{n}$, which consists $\{x_1, \cdots, x_{n}\}$ as the set of vertices and $\{(x_i, x_j):1\leq i<j\leq n\}$ as the set of edges. Since \eqref{F-2-1} is a special case of \eqref{D-2}, we have the following discretization on $G_n$ of system \eqref{F-2-1}:
\begin{align}\label{F-2-2}
\begin{cases}
\displaystyle\frac{d\rho_j}{dt}-\sum_{\substack{l=1\\l\neq j}}^{n}\omega_{jl}(S_j-S_l)\theta_{jl}=0,\\
\displaystyle\frac{dS_j}{dt}+\frac{1}{2}\sum_{\substack{l=1\\l\neq j}}^{n}(S_j-S_l)^2\frac{\partial\theta_{jl}}{\partial\rho_j}=\frac{\partial}{\partial\rho_j}\left(\frac{1}{4}\sum_{\substack{k, l=1\\k\neq l}}^{n}\omega_{kl}\Big(\partial_k\tilde{\mathcal{F}}_n-\partial_l\tilde{\mathcal{F}}_n\Big)^2\theta_{kl}\right),
\end{cases}
\end{align}
where potential $\tilde{\mathcal{F}}$ is given as
\[
\tilde{\mathcal{F}}_n(\rho)=\frac{\kappa}{2}\sum_{i, j=1}^{n}\|x_i-x_j\|^2\rho_i\rho_j=\frac{\kappa}{2}\sum_{i, j=1}^{n}L_n^2(1-\delta_{ij})\rho_i\rho_j.
\]
Here $\rho_j$ and $S_j$ are the probability densityh and potential function on node $j$, respectively. Since the potential function will be differentiated, we add some constant to define another potential function:
\begin{align}\label{F-3}
\mathcal{F}_n[\rho]=\tilde{\mathcal{F}}_n[\rho]-\frac{\kappa}{2} L_n^2=-\frac{\kappa}{2}\sum_{i, j=1}^{n}\delta_{ij}\rho_i\rho_j=-\frac{\kappa L_n^2}{2}\sum_{i=1}^{n}\rho_i^2.
\end{align}
Also, from $\|x_i-x_j\|\equiv L_n$ for all $i\neq j$, we set $\omega_{ij}=\frac{1}{L_n^2}$ for all $i\neq j$. We rescale the system to set $L_n=1$. For the convenience, we set $\theta_{ii}=\rho_i$. From the definition of $\mathcal{F}_n$, we get
\[
\partial_i\mathcal{F}_n=-\kappa \rho_i\quad\forall~1\leq i\leq n.
\]
From these additional assumptions, we can reduce system \eqref{F-2-2} as follows:
\begin{align}\label{F-4}
\begin{cases}
\displaystyle\frac{d\rho_j}{dt}-\sum_{l=1}^{n}(S_j-S_l)\theta_{jl}=0,\\
\displaystyle\frac{dS_j}{dt}+\frac{1}{2}\sum_{l=1}^{n}(S_j-S_l)^2\frac{\partial\theta_{jl}}{\partial\rho_j}=\frac{\partial}{\partial\rho_j}\left(\frac{\kappa^2}{4}\sum_{k, l=1}^{n}\Big(\rho_k-\rho_l\Big)^2\theta_{kl}\right).
\end{cases}
\end{align}

If the initial data of system \eqref{F-4} satisfy
\begin{align}\label{F-4-1-0-0-0}
S_j(0)=-\partial_j\mathcal{F}_n\Big|_{t=0}=\kappa\rho_j(0)\quad\forall~1\leq j\leq n,
\end{align}
then the system can be reduced to the gradient flow formulation(first order system). 
%{\color{red}How about we define (3.4) directly, under some modeling assumptions. Here (3.2) is not necessary. You can define graph, $\mathcal{F}$, $\omega$ directly. }

\subsection{First order dynamics}\label{sec:3.2}

In this subsection, we study the emergent behavior of system \eqref{F-4} with \eqref{F-4-1-0-0-0}. From Lemma \ref{LA.2}, we have
\[
S_j(t)=\kappa \rho_j(t)
\]
for all $1\leq j\leq n$ and $t\geq0$. We substitute this relation into \eqref{F-4}, then we have
\begin{align}\label{F-4-1}
\frac{d\rho_j}{dt}=\kappa\sum_{k=1}^{n}\theta_{jk}(\rho_j-\rho_k).
\end{align}
From a simple calculation, we get
\[
\frac{d}{dt}\sum_{j=1}^n \rho_j^2=\kappa\sum_{j, k=1}^n \theta_{jk}\rho_j(\rho_j-\rho_k).
\]
We assume the following conditions on $\theta_{ij}=\theta(\rho_i, \rho_j)$:

\noindent$\bullet$ Symmetry: $\theta_{ij}=\theta_{ji}$,

\noindent$\bullet$ Lipschitz continuity: $\theta_{ij}$ is Lipschitz continuous function,

\noindent$\bullet$ $\theta_{ij}\geq0$ and the equality only holds either $\rho_i=0$ or $\rho_j=0$.\\

Then, we have
\[
\frac{d}{dt}\sum_{j=1}^n\rho_j^2=\frac{\kappa}{2}\sum_{j, k=1}^n\theta_{jk}(\rho_j-\rho_k)^2\geq0.
\]
On the other hand, we can easily check that if $\theta_{jk}$ is continuous, then the derivative of $\frac{d}{dt}\sum_{j=1}^n\rho_j^2$ is uniformly continuous and we can apply Barbalat's lemma to get
\[
\lim_{t\to\infty}\theta_{jk}(t)(\rho_j(t)-\rho_k(t))^2=0
\]
and this again implies either 
\[
\lim_{t\to\infty}\min(\rho_j(t), \rho_k(t))=0\quad\text{or}\quad \lim_{t\to\infty}(\rho_j(t)-\rho_k(t))=0.
\]
We also use
\[
\sum_{i=1}^{n}\rho_i=1
\]
to get that there exists at least one $1\leq j\leq n$ such that $\rho_j$ does not converges to zero. The set of equilibrium of this system can be expressed as follows:
\[
\mathcal{E}=\bigcup_{m=1}^{n}\mathcal{E}_m,
\]
where $\mathcal{E}_n$ consists $(\rho_1, \cdots, \rho_{n})$ if there exist distinct indices $i_1, i_2, \cdots, i_m$ such that $\rho_{i_1}=\cdots=\rho_{i_m}=\frac{1}{m}$ and other $\rho_j$ are zero. From direct calculations, we have the following lemma.

\begin{lemma}
Let $\rho$ be a solution to \eqref{F-4-1} with initial data $\rho(0)$. If $\rho_i(0)=\rho_j(0)$, then $\rho_i(t)=\rho_j(t)$ for all $t\geq0$.
\end{lemma}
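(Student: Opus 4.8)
The plan is to exploit the symmetry of the vector field in \eqref{F-4-1} directly. Introduce the difference $d(t) := \rho_i(t) - \rho_j(t)$ and derive a closed differential inequality for it, using the hypothesis $d(0)=0$ together with a uniqueness/Gr\"onwall argument. Concretely, I would write
\[
\frac{d}{dt}(\rho_i-\rho_j) = \kappa\sum_{k=1}^n\Big(\theta_{ik}(\rho_i-\rho_k) - \theta_{jk}(\rho_j-\rho_k)\Big),
\]
and then massage the right-hand side so that every term is manifestly proportional to $(\rho_i-\rho_j)$ or to $(\theta_{ik}-\theta_{jk})$, which in turn is controlled by $|\rho_i-\rho_j|$ via Lipschitz continuity of $\theta$. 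The natural way to organize the bookkeeping is to pull out the $k=i$ and $k=j$ terms (where $\theta_{ii}=\rho_i$ and $\theta_{jj}=\rho_j$ by the convention $\theta_{ii}=\rho_i$), and to pair up the generic term indexed by $k$ (with $k\neq i,j$) by adding and subtracting $\theta_{ik}(\rho_j-\rho_k)$:
\[
\theta_{ik}(\rho_i-\rho_k) - \theta_{jk}(\rho_j-\rho_k) = \theta_{ik}(\rho_i-\rho_j) + (\theta_{ik}-\theta_{jk})(\rho_j-\rho_k).
\]

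Once this is done, each summand is a product of a bounded factor (the densities stay in a compact set, since they are nonnegative and sum to $1$, and $\theta$ is continuous hence bounded on that set) with either $(\rho_i-\rho_j)$ directly, or with $(\theta_{ik}-\theta_{jk})$, and the latter is bounded by $\mathrm{Lip}(\theta)\,|\rho_i-\rho_j|$ by the second hypothesis on $\theta$. Collecting constants, I obtain an estimate of the form $\left|\tfrac{d}{dt}(\rho_i-\rho_j)\right| \le C\,|\rho_i-\rho_j|$ for some constant $C$ depending on $n$, $\kappa$, and $\mathrm{Lip}(\theta)$, valid along the solution trajectory. Gr\"onwall's inequality (or, equivalently, uniqueness for the scalar ODE $\dot d = \text{(bounded)}\cdot d$ with $d(0)=0$) then forces $\rho_i(t)-\rho_j(t)\equiv 0$ for all $t\ge 0$.

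The genuinely delicate point is not the algebra but making sure the coefficients in the differential inequality are legitimately bounded \emph{uniformly in $t$}. This rests on two facts that I would state explicitly: first, that $\sum_k\rho_k(t)=1$ is conserved and each $\rho_k(t)\ge 0$ (the latter is where one might need a brief remark about the structure of \eqref{F-4-1} keeping the simplex invariant, or else one simply works on the maximal interval on which the densities are bounded), so the trajectory lives in a compact set; and second, that a continuous function on a compact set is bounded, which gives uniform bounds on $\theta_{ik}$. Given these, the Gr\"onwall step is immediate. Alternatively — and perhaps cleaner for the write-up — one can phrase it as follows: the diagonal set $\{\rho_i=\rho_j\}$ is invariant because on it the $i$-th and $j$-th components of the vector field coincide (a one-line check using $\theta_{ik}=\theta_{ik}$, $\theta_{jk}=\theta_{ik}$ when $\rho_i=\rho_j$, and symmetry), so by uniqueness of solutions to the ODE \eqref{F-4-1} — which holds since the right-hand side is locally Lipschitz — a trajectory starting on the diagonal stays on it. I would present the invariance-plus-uniqueness version as the main argument and mention the Gr\"onwall computation as the quantitative underpinning.
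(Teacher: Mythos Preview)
Your proposal is correct; the paper itself omits any proof, stating only that the lemma follows ``from direct calculations,'' and your symmetry-plus-uniqueness argument (with the Gr\"onwall computation as backup) is precisely the natural way to carry those calculations out. Either variant you sketch works, and the invariance-plus-uniqueness phrasing is the cleanest write-up.
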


Now, we assume the following additional condition:\\

\noindent$\bullet$ $\theta_{ij}$ is an increasing function of $\min(\rho_i, \rho_j)$, i.e. $\theta(\rho_i, \rho_j)=\tilde{\theta}(\min(\rho_i, \rho_j))$ for some $\tilde{\theta}$ and satisfies
\[
\min(\rho_i, \rho_j)\geq\min(\rho_k, \rho_l)\quad\Longrightarrow\quad\theta_{ij}\geq\theta_{kl}.
\]

As examples, we have $\theta_{ij}=\theta(\rho_i, \rho_j)=\mathrm{min}(\rho_i, \rho_j)$. This $\theta_{ij}$ satisfies all conditions that we assumed. From the additional condition, we have the following property.
\begin{lemma}\label{L3.4}
Suppose that the initial data $\{\rho_j(0)\}_{j=1}^n$ satisfy
\[
\rho_1(0)=\max_{1\leq j\leq n}\rho_j(0)\quad\text{and}\quad \rho_1(0)>\max_{1<j\leq n}\rho_j(0),
\]
and let $\rho$ be a solution to \eqref{F-4-1} with initial data $\rho(0)$. Then
\[
\rho_1(t)-\max_{1<j\leq n}\rho_j(t)\geq\rho_1(0)-\max_{1<j\leq n}\rho_j(0).
\]
\end{lemma}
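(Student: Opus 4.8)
The plan is to show that the quantity $D(t) := \rho_1(t) - \max_{1<j\leq n}\rho_j(t)$ is nondecreasing in $t$, which immediately yields the claimed lower bound by evaluating at $t=0$. The first step is to understand the dynamics of the index realizing the maximum over $j>1$. Since each $\rho_j$ is $C^1$ (the right-hand side of \eqref{F-4-1} is Lipschitz, so solutions exist, are unique, and are differentiable), the function $M(t) := \max_{1<j\leq n}\rho_j(t)$ is locally Lipschitz, hence differentiable almost everywhere, and at a point of differentiability $\dot M(t) = \dot\rho_{j^\star}(t)$ for any index $j^\star$ achieving the maximum at that time (a standard fact about maxima of finitely many $C^1$ functions, e.g.\ via Danskin's theorem). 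So it suffices to prove that $\dot\rho_1(t) \geq \dot\rho_{j^\star}(t)$ for such a $j^\star$, at almost every $t$; integrating this inequality gives $D(t)$ nondecreasing.

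The second and main step is the pointwise comparison $\dot\rho_1 \geq \dot\rho_{j^\star}$. Using \eqref{F-4-1},
\[
\dot\rho_1 - \dot\rho_{j^\star} = \kappa\sum_{k=1}^n \theta_{1k}(\rho_1-\rho_k) - \kappa\sum_{k=1}^n \theta_{j^\star k}(\rho_{j^\star}-\rho_k).
\]
I would split the sum cleverly and pair up the contributions. A clean way: write the difference as
\[
\kappa\sum_{k=1}^n \Big[\theta_{1k}(\rho_1-\rho_k) - \theta_{j^\star k}(\rho_{j^\star}-\rho_k)\Big],
\]
and for each fixed $k$ argue the bracketed term is nonnegative. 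Here I use that $\rho_1 \geq \rho_k$ and $\rho_1 \geq \rho_{j^\star} \geq \rho_k$ for every $k$ (the latter because $\rho_1$ is the global max and $\rho_{j^\star}$ is the max among the rest, so $\rho_{j^\star}\geq\rho_k$ for all $k\neq 1$; for $k=1$ the $\theta_{j^\star 1}(\rho_{j^\star}-\rho_1)$ term is $\leq 0$ while $\theta_{11}(\rho_1-\rho_1)=0$, so that $k$ behaves well too). For $k\neq 1$, since $\rho_1 \geq \rho_{j^\star}$ we have $\rho_1-\rho_k \geq \rho_{j^\star}-\rho_k \geq 0$; and since $\min(\rho_1,\rho_k)=\rho_k = \min(\rho_{j^\star},\rho_k)$ whenever $\rho_k\leq\rho_{j^\star}$, the monotonicity hypothesis on $\theta$ gives $\theta_{1k}=\tilde\theta(\rho_k)=\theta_{j^\star k}$ in fact — so the bracket is $\theta_{1k}\big((\rho_1-\rho_k)-(\rho_{j^\star}-\rho_k)\big) = \theta_{1k}(\rho_1-\rho_{j^\star}) \geq 0$. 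One must also double-check the edge case where $\rho_k > \rho_{j^\star}$ cannot occur for $k\neq 1$ by definition of $j^\star$ as the argmax over $j>1$, so the computation is exhaustive.

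The third step is bookkeeping: assemble the a.e.\ inequality $\dot D(t)\geq 0$ into $D(t)\geq D(0)$ using absolute continuity of $D$ (it is locally Lipschitz, being a difference of a $C^1$ function and a locally Lipschitz one). One subtlety worth a sentence: the argmax index $j^\star$ can change over time, and the initial strict inequality $\rho_1(0) > M(0)$ together with Lemma~3.3 (equal initial data stay equal) guarantees $\rho_1$ never gets tied from below in a way that breaks the analysis — but actually the monotonicity of $D$ is what we want regardless, so no separation argument is strictly needed; the strictness hypothesis is really just there so the bound is informative (it shows $\rho_1$ stays strictly dominant).

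\textbf{Main obstacle.} I expect the only real subtlety is the non-smoothness of $M(t)=\max_{1<j\leq n}\rho_j(t)$: handling the derivative of a max of $C^1$ functions rigorously (differentiability a.e., the formula $\dot M = \dot\rho_{j^\star}$ at differentiability points, and the passage from an a.e.\ derivative inequality to the integrated inequality). This is standard but needs to be stated carefully; everything else reduces to the elementary per-$k$ sign check above, which leans essentially on the hypothesis that $\theta_{ij}=\tilde\theta(\min(\rho_i,\rho_j))$ is monotone in $\min(\rho_i,\rho_j)$.
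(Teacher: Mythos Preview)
Your proposal is correct and follows essentially the same route as the paper: fix (or at a.e.\ $t$ pick) an index $j^\star$ realizing $\max_{j>1}\rho_j$, use $\theta_{1k}=\tilde\theta(\rho_k)=\theta_{j^\star k}$ for $k\neq 1$ (since $\rho_1,\rho_{j^\star}\geq\rho_k$) and handle $k=1$ separately, to conclude $\dot\rho_1-\dot\rho_{j^\star}$ is a nonnegative multiple of $\rho_1-\rho_{j^\star}$. The paper treats the non-smoothness of the max more informally by working on a short interval where the argmax index is fixed, whereas you invoke a.e.\ differentiability and absolute continuity; the core computation is identical.
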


\begin{proof}
Without loss of generality, we set
\[
\rho_2(t)=\max_{1<j\leq n}\rho_j(t)\quad\text{and}\quad \rho_1(t)>\rho_2(t)\quad \forall~t\in[0, \epsilon)
\]
for a positive $\epsilon$. From simple calculations, we have
\begin{align*}
\dot{\rho}_1&=\kappa\sum_{k=1}^{n}\theta_{1k}(\rho_1-\rho_k)=\kappa\sum_{k=1}^{n}\tilde\theta(\rho_k)(\rho_1-\rho_k)\\
\dot{\rho}_2&=\kappa\sum_{k=1}^{n}\theta_{2k}(\rho_2-\rho_k)=\kappa\tilde{\theta}(\rho_2)(\rho_2-\rho_1)+\kappa\sum_{k=2}^{n}\tilde\theta(\rho_k)(\rho_2-\rho_k).
\end{align*}
This implies
\begin{align*}
\frac{d}{dt}(\rho_1-\rho_2)&=2\kappa\tilde{\theta}(\rho_2)(\rho_1-\rho_2)+\kappa\sum_{k=2}^{n}\tilde{\theta}(\rho_k)(\rho_1-\rho_2)\\
&=\kappa(\rho_1-\rho_2)\left(2\tilde{\theta}(\rho_2)+\sum_{k=1}^{n}\tilde{\theta}(\rho_k)\right)\geq0.
\end{align*}
So, we have
\[
\rho_1(t)-\rho_2(t)\geq \rho_1(0)-\rho_2(0).
\]
This result directly implies the desired result.
\end{proof}

Using a similar argument, we can prove the following lemma.

\begin{lemma}
Suppose that the initial data $\rho(0)$ satisfy
\[
\rho_1(0)=\rho_2(0)=\cdots=\rho_m(0)>\rho_{m+1}(0)\geq\cdots\geq \rho_{n}(0),
\]
and let $\rho$ be a solution to \eqref{F-4-1} with initial data $\rho(0)$. Then, we have
\[
\rho_1(t)=\rho_2(t)=\cdots=\rho_m(t)\quad\forall~t\geq0
\]
and
\begin{align}\label{F-7}
\rho_1(t)-\max_{m<j\leq n}\rho_j(t)\geq\rho_1(0)-\max_{m<j\leq n}\rho_j(0).
\end{align}
\end{lemma}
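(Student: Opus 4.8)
The plan is to mimic the argument of Lemma~\ref{L3.4} but now applied to the collapsed coordinate $w(t):=\rho_1(t)=\cdots=\rho_m(t)$, after first establishing that the block $\{1,\dots,m\}$ stays equal for all time. The equality statement $\rho_1(t)=\cdots=\rho_m(t)$ follows immediately from the preceding lemma (the one asserting $\rho_i(0)=\rho_j(0)\Rightarrow\rho_i(t)=\rho_j(t)$) applied pairwise to the indices $1,\dots,m$; alternatively, it follows from uniqueness of solutions together with the permutation symmetry of system \eqref{F-4-1} under swapping any two of the first $m$ nodes. So the real content is the monotonicity bound \eqref{F-7}.

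For the monotonicity bound, I would argue on a maximal initial interval $[0,\epsilon)$ on which some fixed index, say $m+1$ after relabelling, realizes $\max_{m<j\le n}\rho_j(t)$ and on which $\rho_1(t)>\rho_{m+1}(t)$; by continuity such an $\epsilon>0$ exists. On this interval, write out $\dot\rho_1$ and $\dot\rho_{m+1}$ using \eqref{F-4-1} and the representation $\theta_{jk}=\tilde\theta(\min(\rho_j,\rho_k))$. Since $\rho_1$ is the (a) maximum, $\theta_{1k}=\tilde\theta(\rho_k)$ for every $k$; and since $\rho_{m+1}\ge\rho_k$ for $k>m+1$ while $\rho_{m+1}<\rho_1=\cdots=\rho_m$, we get $\theta_{m+1,k}=\tilde\theta(\rho_{m+1})$ for $k\le m$ and $\theta_{m+1,k}=\tilde\theta(\rho_k)$ for $k>m$. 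Subtracting and regrouping exactly as in the proof of Lemma~\ref{L3.4}, the terms indexed by $k>m$ combine into $\kappa\sum_{k>m}\tilde\theta(\rho_k)(\rho_1-\rho_{m+1})$, the $m$ terms with $k\le m$ from $\dot\rho_{m+1}$ contribute $\kappa m\,\tilde\theta(\rho_{m+1})(\rho_1-\rho_{m+1})$, and the $k\le m$ terms in $\dot\rho_1$ vanish because $\rho_1=\rho_k$ there. Hence
\begin{align*}
\frac{d}{dt}(\rho_1-\rho_{m+1})=\kappa(\rho_1-\rho_{m+1})\left(m\,\tilde\theta(\rho_{m+1})+\sum_{k>m}\tilde\theta(\rho_k)\right)\ge 0,
\end{align*}
using $\tilde\theta\ge 0$ and $\rho_1\ge\rho_{m+1}$ on $[0,\epsilon)$. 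Thus $\rho_1-\rho_{m+1}$ is nondecreasing on $[0,\epsilon)$.

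To upgrade this to all $t\ge 0$ and to the true running maximum $\max_{m<j\le n}\rho_j$, I would note that the differential inequality $\frac{d}{dt}(\rho_1-\rho_{m+1})\ge 0$ (or more robustly, a one-sided inequality $\frac{d}{dt}(\rho_1-\rho_{m+1})\ge c(t)(\rho_1-\rho_{m+1})$ with $c(t)\ge 0$ bounded, valid whenever $m+1$ is a maximizer) guarantees the gap $\rho_1-\rho_{m+1}$ never decreases; by Lemma~\ref{L3.4}-type reasoning the strict inequality $\rho_1>\rho_{m+1}$ therefore persists, so the index set realizing the max among $\{m+1,\dots,n\}$ can change only among indices that were \emph{not} the max initially, each of which starts strictly below $\rho_1(0)-(\rho_1(0)-\max_{m<j}\rho_j(0))$... more cleanly: for \emph{every} fixed $j>m$, the same computation gives $\frac{d}{dt}(\rho_1-\rho_j)\ge 0$ on any interval where $\rho_j=\max_{m<l\le n}\rho_l$ and $\rho_1>\rho_j$, and away from such intervals $\rho_j$ is not the running max so it does not affect $\max_{m<l\le n}\rho_l$; patching over the finitely many switching times (the maximizer index is piecewise constant by continuity and analyticity/Lipschitz of the flow) yields $\rho_1(t)-\max_{m<j\le n}\rho_j(t)\ge\rho_1(0)-\max_{m<j\le n}\rho_j(0)$ for all $t\ge 0$. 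The main obstacle is this bookkeeping at the switching times of the arg-max — i.e., making the "patch together over intervals" step rigorous; this is handled by observing that $t\mapsto\max_{m<j\le n}\rho_j(t)$ is locally Lipschitz and its lower-left Dini derivative is bounded above by $\dot\rho_{j^*}$ for any active maximizer $j^*$, combined with $\dot\rho_1\ge\dot\rho_{j^*}$ at such times, so the difference $\rho_1-\max_{m<j\le n}\rho_j$ has nonnegative lower Dini derivative everywhere and is therefore nondecreasing.
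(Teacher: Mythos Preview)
Your proposal is correct and follows essentially the same route the paper intends: the paper gives no proof beyond the remark ``using a similar argument, we can prove the following lemma,'' and your write-up is precisely that similar argument---the equality of the first $m$ coordinates via the preceding lemma, and the derivative computation for $\rho_1-\rho_{m+1}$ generalizing the $m=1$ case of Lemma~\ref{L3.4} (your formula $\frac{d}{dt}(\rho_1-\rho_{m+1})=\kappa(\rho_1-\rho_{m+1})\big(m\,\tilde\theta(\rho_{m+1})+\sum_{k>m}\tilde\theta(\rho_k)\big)\ge 0$ is the correct analog). Your Dini-derivative bookkeeping at arg-max switching times is more careful than anything the paper spells out, but it is not a different method, just a cleaner justification of the same continuation step.
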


From the previous lemma, we know that $\rho_i$ and $\rho_j$ cannot converge to same value when $i\leq m<j$. So, we can obtain the following theorem.

\begin{theorem}\label{T3.1}
Suppose that the initial data $\rho(0)$ satisfy
\[
m=\left|\mathcal{S}_M\right|,\quad \mathcal{S}_M=\left\{j:\rho_j(0)=\max_{1\leq k\leq n}\rho_k(0),\quad 1\leq j\leq n\right\} 
\]
and let $\rho$ be a solution to \eqref{F-4-1} with initial data $\rho(0)$. Then, we have
\[
\lim_{t\to\infty}\rho_j(t)=\frac{1}{m}\quad\forall j\in \mathcal{S}_M\quad\text{and}\quad \lim_{t\to\infty}\rho_k(t)=0\quad\forall j\not\in\mathcal{S}_M.
\]
\end{theorem}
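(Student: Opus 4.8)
The plan is to combine the three ingredients already established: (i) the Lyapunov identity $\frac{d}{dt}\sum_j\rho_j^2=\frac{\kappa}{2}\sum_{j,k}\theta_{jk}(\rho_j-\rho_k)^2\geq0$ together with Barbalat's lemma, which forces $\theta_{jk}(t)(\rho_j(t)-\rho_k(t))^2\to0$ for every pair $(j,k)$; (ii) the invariance and gap-monotonicity lemmas (Lemma \ref{L3.4} and its successor), which say the maximal coordinates stay tied to each other and strictly separated from the rest; and (iii) the fact that $\sum_i\rho_i=1$ is conserved, so not all coordinates can vanish. First I would fix the index set $\mathcal{S}_M$ of initial maximizers, of size $m$; by the invariance lemma all $\rho_j$, $j\in\mathcal{S}_M$, remain equal for all time, call this common value $a(t)$, and by the gap estimate \eqref{F-7} we have $a(t)-\max_{k\notin\mathcal{S}_M}\rho_k(t)\geq a(0)-\max_{k\notin\mathcal{S}_M}\rho_k(0)=:\delta>0$ for all $t$.

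Next I would extract the limit. Barbalat gives, for a fixed $j\in\mathcal{S}_M$ and any $k\notin\mathcal{S}_M$, that along every sequence $t\to\infty$ either $\min(\rho_j,\rho_k)\to0$ or $\rho_j-\rho_k\to0$; the latter is excluded by the uniform gap $\delta$, so $\min(\rho_j(t),\rho_k(t))\to0$. Since $\rho_j(t)=a(t)\geq 1/n>0$ is bounded below (because $\sum_{i\in\mathcal{S}_M}\rho_i=ma(t)$ is at least $\sum$ of the largest coordinates, hence $a(t)\geq 1/n$), the minimum can only be realized by $\rho_k$, so $\rho_k(t)\to0$ for every $k\notin\mathcal{S}_M$. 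To make this rigorous I would argue by contradiction on $\limsup_{t\to\infty}\rho_k(t)=:c>0$: pick a sequence $t_\ell\to\infty$ with $\rho_k(t_\ell)\to c$; along it $\min(\rho_j(t_\ell),\rho_k(t_\ell))\to\min(\liminf a(t_\ell),c)$, and since $a\geq1/n$ this minimum is $\geq\min(1/n,c)>0$, contradicting $\min(\rho_j,\rho_k)\to0$. Hence $\rho_k(t)\to0$ for all $k\notin\mathcal{S}_M$.

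Finally, conservation of mass closes the argument: $\sum_{j\in\mathcal{S}_M}\rho_j(t)=1-\sum_{k\notin\mathcal{S}_M}\rho_k(t)\to1$, and since all the $m$ terms on the left are equal to $a(t)$, we get $ma(t)\to1$, i.e. $\rho_j(t)\to\frac1m$ for every $j\in\mathcal{S}_M$. This gives both conclusions of the theorem.

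The main obstacle I anticipate is the step that upgrades the Barbalat conclusion (a statement about subsequential limits of $\theta_{jk}(\rho_j-\rho_k)^2$) into the clean pointwise statement $\rho_k(t)\to0$: one must rule out oscillation of $\rho_k$ between $0$ and a positive value, and this genuinely uses the uniform lower bound $a(t)\geq1/n$ on the surviving coordinates — which in turn relies on the gap estimate \eqref{F-7} being uniform in $t$, not merely on short intervals. A secondary subtlety is handling the edge case $m=n$ (all coordinates initially equal), where $\mathcal{S}_M$ is everything, $\delta$ is vacuous, and the conclusion $\rho_j\equiv1/n$ follows directly from the invariance lemma plus mass conservation without invoking Barbalat at all; I would dispose of that case separately at the start.
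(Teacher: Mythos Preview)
Your proposal is correct and follows essentially the same route as the paper: relabel so that $\mathcal{S}_M=\{1,\dots,m\}$, use the invariance lemma to keep $\rho_1=\cdots=\rho_m$, use the gap estimate \eqref{F-7} to rule out the alternative $\rho_1-\rho_k\to0$ in the Barbalat dichotomy, conclude $\rho_k\to0$ for $k>m$, and finish with mass conservation. The paper's write-up is terser---it simply says ``from the form of equilibrium, the limit of $\rho_j(t)$ should be zero''---whereas you spell out the lower bound $a(t)\ge 1/n$ and the $\limsup$ argument to exclude oscillation; this extra care is warranted and does not change the underlying strategy.
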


\begin{proof}
Without loss of generality, we set
\[
\rho_1(0)=\rho_2(0)=\cdots=\rho_m(0)>\rho_{m+1}(0)\geq\cdots\geq \rho_{n}(0).
\]
From the previous lemma, we know that
\[
\rho_1(t)=\rho_2(t)=\cdots=\rho_m(t)\quad\forall~t\geq0
\]
From \eqref{F-7}, we know that
\[
\lim_{t\to\infty}\rho_1(t)-\rho_j(t)\neq0
\]
for all $j>m$. From the form of equilibrium, the limit of $\rho_j(t)$ should be zero for all $j>m$. Finally, we can obtain
\[
\lim_{t\to\infty}\rho_j(t)=\frac{1}{m}
\]
for all $1\leq j\leq m$.
\end{proof}
\begin{remark}
\noindent$\bullet$ The initial data $\rho^0$ is given on $\mathcal{P}:=\{\rho: \rho_1+\cdots+\rho_{n}=1,\quad \rho_i\geq0\quad\forall~1\leq i\leq n\}$. If we assume that the initial data is randomly distributed on $\mathcal{P}$ uniformly then we know that the probability of $m\geq2$ is zero, since $\mathcal{P}$ is closed region of the $d+1$-dimensional plane and the case $m=k$ is a part of $n-k$-dimensional plane. Since $\mathrm{Prob}(m=1)=1$, for the generic initial data $\rho(0)$, the complete consensus exhibits. i.e. there exists only one index $1\leq j\leq n$ such that 
\[
\lim_{t\to\infty}\rho_j(t)=1\quad\text{and}\quad \lim_{t\to\infty}\rho_k(t)=0\quad\forall~k\neq j.
\]

\noindent$\bullet$ $\mathcal{E}_1$ is the set of stable equilibrium. Let $\rho^\infty:=(\rho_1^\infty,\rho_2^\infty, \cdots, \rho_{n}^\infty)=(1, 0, \cdots, 0)$ be an element of $\mathcal{E}_1$. Then, for any sufficiently small non-negative constants $\epsilon_2, \cdots, \epsilon_{n}$, we know that $(\rho_1, \rho_2, \cdots, \rho_{n})=(1-\epsilon_2-\cdots-\epsilon_{n}, \epsilon_2, \cdots, \epsilon_{n})$ converges to $\rho^\infty$. This implies that any state $\rho^\infty\in\mathcal{E}_1$ is a stable state.\\

\noindent$\bullet$ $\bigcup_{m=2}^{n}\mathcal{E}_m$ is the set of unstable equilibrium. Let $2\leq k\leq n$ and $\rho^\infty:=(\rho_1^\infty,\rho_2^\infty, \cdots, \rho_{n}^\infty)=\Big(\underbrace{\frac{1}{k}, \cdots, \frac{1}{k}}_{k\text{ times}}, \underbrace{0, \cdots, 0}_{d-k+2 \text{ times}}\Big)$ be an element of $\mathcal{E}_k$. For any constants $0<\epsilon<\frac{1}{k}$, $(\rho_1^\infty-\epsilon, \rho_2^\infty, \cdots, \rho_{d+1}^\infty, \rho_{n}^\infty+\epsilon)$ does not converges to $\rho^\infty$. This implies that any state $\rho^\infty\in\bigcup_{m=2}^{n}\mathcal{E}_m$ is an unstable state.
\end{remark}

Now, we investigate the convergence rate of system when $\theta(\rho_i, \rho_j)=\Big(\min(\rho_i, \rho_j)\Big)^\alpha$ for some $\alpha\geq1$. Obviously, this function $\theta_{ij}$ satisfies the suggested assumptions. We assume that for a given initial configuration $\rho^0$, the corresponding solution converges to $\rho^\infty\in\mathcal{E}_1$. Without loss of generality, we assume that
\[
\lim_{t\to\infty}\rho_1(t)=1\quad\text{and}\quad \lim_{t\to\infty}\rho_j(t)=0\quad\forall ~2\leq j\leq n.
\]
Then we know that 
\[
\rho_1(t)=\max_{1\leq i\leq n} \rho_i(t),\quad \rho_1(t)>\frac{1}{n}\quad\forall~t\geq0,
\]
and this yields the following calculation:
\begin{align}
\begin{aligned}\label{F-8}
\frac{d}{dt}\rho_1&=\sum_{k=1}^{n}\rho_k^\alpha(\rho_1-\rho_k)
\leq\sum_{k=1}^{n}(1-\rho_1)^\alpha(\rho_1-\rho_k)\\
&=(1-\rho_1)^\alpha((n)\rho_1-1)
\leq(d+1)(1-\rho_1)^\alpha.
\end{aligned}
\end{align}
Now, we have
\begin{align*}
\frac{d}{dt}\rho_1&=\sum_{k=1}^{n}\rho_k^\alpha(\rho_1-\rho_k)
\geq\left(\rho_1-\max_{2\leq j\leq n}\rho_j\right)\sum_{k=2}^{n}\rho_k^\alpha.
\end{align*}
From the H\"{o}lder inequality, we get
\[
\left(\sum_{k=2}^{n}1^{\alpha_*}\right)^{\frac{1}{\alpha_*}}\left(\sum_{k=2}^{n}\rho_k^\alpha\right)^{\frac{1}{\alpha}}\geq \left(\sum_{k=2}^{n}\rho_k\right),
\]
where $\alpha_*$ is the conjugate exponent of $\alpha$, i.e. $\alpha_*=\frac{\alpha}{\alpha-1}$. So, we have
\[
\sum_{k=2}^{n}\rho_k^\alpha\geq\frac{1}{(d+1)^{\alpha-1}}\left(\sum_{k=2}^{n}\rho_k\right)^\alpha=(d+1)^{1-\alpha}(1-\rho_1)^\alpha.
\]
Finally, we get
\begin{align}
\begin{aligned}\label{F-9}
\frac{d}{dt}\rho_1&\geq \left(\rho_1-\max_{2\leq j\leq n}\rho_j\right)(d+1)^{1-\alpha}(1-\rho_1)^\alpha\\
&\geq\left(\rho_1^0-\max_{2\leq j\leq n}\rho_j^0\right)(d+1)^{1-\alpha}(1-\rho_1)^\alpha.
\end{aligned}
\end{align}
Here, we use Lemma \ref{L3.4} in the last inequality. Now, we combine \eqref{F-8} and \eqref{F-9} to get
\begin{align}\label{F-10}
C_1(1-\rho_1)^\alpha\leq \frac{d}{dt}\rho_1\leq C_2(1-\rho_1)^\alpha
\end{align}
for some positive constants $C_1$ and $C_2$. From solving the ODE, we have the following lemma.
\begin{lemma}\label{L6.4}
Let $x(t)$ be a solution to the following ODE:
\[
\dot{x}(t)=C(1-x(t))^\alpha,\quad x(0)=x^0.
\]
for some positive constant $C$ and $\alpha\geq1$. Then, we have
\[
x(t)=\begin{cases}
1-(1-x^0)e^{-Ct}&\quad\text{when }\alpha=1,\\
\displaystyle1-\frac{1}{\left((1-x^0)^{1-\alpha}+C(\alpha-1)t\right)^{\frac{1}{\alpha-1}}}&\quad\text{when }\alpha>1.
\end{cases}
\]
\end{lemma}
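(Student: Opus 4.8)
The plan is to solve this separable ODE explicitly via the change of variables $y(t):=1-x(t)$, which turns the problem into $\dot y=-Cy^\alpha$ with $y(0)=y^0:=1-x^0$, treat the cases $\alpha=1$ and $\alpha>1$ separately, and then translate back. I would work under the assumption $x^0<1$ — the case relevant to the application, where $\rho_1^0<1$ — so that $y^0>0$; the degenerate case $x^0=1$ simply yields the constant solution $x\equiv1$.

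First I would record a uniqueness statement: for $\alpha\ge 1$ the map $y\mapsto y^\alpha$ is $C^1$ on $[0,\infty)$ (its derivative $\alpha y^{\alpha-1}$ being continuous there), hence locally Lipschitz, so by Picard--Lindel\"of the initial value problem $\dot y=-Cy^\alpha$, $y(0)=y^0$ has a unique solution, and it suffices to exhibit one. For $\alpha=1$ the equation $\dot y=-Cy$ is linear with solution $y(t)=y^0 e^{-Ct}$, giving $x(t)=1-(1-x^0)e^{-Ct}$. For $\alpha>1$ I would separate variables, $y^{-\alpha}\,dy=-C\,dt$, and integrate from $0$ to $t$ to get $\frac{y(t)^{1-\alpha}-(y^0)^{1-\alpha}}{1-\alpha}=-Ct$, i.e. $y(t)^{1-\alpha}=(y^0)^{1-\alpha}+C(\alpha-1)t$, whence $y(t)=\big((1-x^0)^{1-\alpha}+C(\alpha-1)t\big)^{-1/(\alpha-1)}$ and the claimed formula for $x$. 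I would then verify by direct differentiation that this candidate solves the ODE and matches the initial value, and note that since $(y^0)^{1-\alpha}>0$ and $C(\alpha-1)t\ge 0$, the bracketed expression stays strictly positive for all $t\ge 0$; thus the formula (hence, by uniqueness, the solution) is defined on all of $[0,\infty)$ with $y(t)>0$, i.e. $x(t)<1$, which retroactively legitimizes the division by $y^\alpha$ in the separation step.

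There is no genuine obstacle here — the argument is routine ODE separation. The only points that merit care are (a) justifying that separation of variables is valid, i.e. that $y(t)>0$ for all $t$, which follows from the positivity/monotonicity remark above (alternatively: $\dot y\le 0$ while $y>0$, and $y$ cannot reach $0$ in finite time by uniqueness against the zero solution), and (b) flagging the excluded degenerate initial value $x^0=1$.
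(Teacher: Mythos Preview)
Your proof is correct and is exactly the standard separation-of-variables computation the paper has in mind; the paper itself omits the proof entirely, merely writing ``From solving the ODE, we have the following lemma.'' Your extra care with uniqueness, positivity of $y$, and the degenerate case $x^0=1$ is more than the paper provides.
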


We combine \eqref{F-10} and Lemma \ref{L6.4} to get
\[
(1-\rho_1^0)e^{-C_1t}\leq 1-\rho_1\leq (1-\rho_1^0)e^{-C_2 t}\quad\text{when }\alpha=1
\]
and
\[
\frac{1}{\left((1-\rho_1^0)^{1-\alpha}+C_1(\alpha-1)t\right)^{\alpha-1}}\leq 1-\rho_1\leq\frac{1}{\left((1-\rho_1^0)^{1-\alpha}+C_2(\alpha-1)t\right)^{\alpha-1}}\quad\text{when }\alpha>1.
\]
So, we can conclude the result as follows: 
\begin{align}\label{F-10-1}
1-\rho_1(t)\simeq\begin{cases}
\text{blow up in a finite time}\quad&\text{when}\quad0<\alpha<1,\\
\text{converges to zero exponentially}&\text{when}\quad\alpha=1,\\
Ct^{(1-\alpha)^{-1}}&\text{when}\quad1<\alpha,\\
\end{cases}
\end{align}

For example, when $\alpha=1, 2, 3$, we have
\begin{align}\label{F-11}
1-\rho_1(t)\simeq e^{-Ct},\quad 1-\rho_1(t)\simeq \frac{1}{t},\quad 1-\rho_1(t)\simeq \frac{1}{\sqrt{t}},
\end{align}
respectively. So we can conclude that the convergence rate is sensitive to $\alpha\geq1$. So far, we investigated emergent behaviors on the complete graph. From now on, we consider the system \eqref{F-2-2} on general graphs. We set the network topology as follows: 
\begin{align}\label{F-12-0}
\omega_{ij}=\begin{cases}
1\quad\text{when }(i, j)\in E,\\
0\quad\text{when }(i, j)\not\in E.
\end{cases}
\end{align}
From a simple calculation, we can obtain the following system:
\begin{align}\label{F-12}
\frac{d\rho_j}{dt}=\kappa\sum_{l\in N_j}\theta_{jl}(\rho_j-\rho_l).
\end{align}
This yields
\begin{align*}
\frac{d}{dt}\sum_{j=1}^n\rho_j^2&=\sum_{j=1}^n2\kappa\rho_j\left(\sum_{l\in N_j}\theta_{jl}(\rho_j-\rho_l)\right)\\
&=2\kappa\sum_{(j, l)\in E}\theta_{jl}\rho_j(\rho_j-\rho_l)
=\kappa\sum_{(j, l)\in E}\theta_{jl}(\rho_j-\rho_l)^2\geq0.
\end{align*}
From the continuity of $\theta_{jl}$, we also apply Barbalat's lemma \cite{Bar} to get
\begin{align*}\label{F-13}
\lim_{t\to\infty}\sum_{(j, l)\in E}\theta_{jl}(\rho_j-\rho_l)^2=0.
\end{align*}
Since each term in the sum is non-negative, we again obtain
\[
\lim_{t\to\infty}(\rho_j-\rho_l)^2\theta_{jl}=0\quad\forall~(j, l)\in E.
\]
From this relation, if $(j, l)\in E$ then we have either
\begin{align}\label{F-14}
\lim_{t\to\infty}(\rho_j-\rho_l)=0\quad\text{or}\quad \lim_{t\to\infty}\min(\rho_j, \rho_l)=0.
\end{align}
When the graph is complete(i.e. $E=(V\times V)\backslash \mathrm{diag}(V\times V)$), then \eqref{F-14} holds for all $j, l\in V$ with $j\neq l$. This yields that if there exists one index $i\in V$ such that $\rho_i(0)>\max_{j\neq i}\rho_j(0)$, then 
\begin{align}\label{F-15}
\rho_i\to 1,\quad \rho_j\to0\quad\forall~j\in V\backslash\{i\}.
\end{align}
We have already shown the above property in Theorem \ref{T3.1}. However, when the graph is not a complete graph, then we have the counter example for \eqref{F-15}. We consider the graph is given as a square. We denote four vertices as $A$, $B$, $C$ and $D$, counter-clockwise order.\\
%\begin{center}
%\begin{figure}[h]
%\begin{tikzpicture}
%\coordinate (O) at (0,0);
%\draw[-] (O) -- (2, 0) -- (2, 2)--(0, 2)--(O);
%\filldraw (O) circle (1pt) node[left] {$A$};
%\filldraw (2, 0) circle (1pt) node[right] {$B$};
%\filldraw (2, 2) circle (1pt) node[right] {$C$};
%\filldraw (0, 2) circle (1pt) node[left] {$D$};
%\end{tikzpicture}
%\label{s}
%\end{figure}
%\end{center}

We also assume that the initial density is given as
\[
\rho_A(0)=0.6,\quad \rho_B(0)=0.1,\quad \rho_C(0)=0.2,\quad \rho_D(0)=0.1.
\]
Then, the we can easily check that
\[
\rho_A(t)\text{ and } \rho_C(t)\quad\text{increase},\quad \rho_B(t)\text{ and }\rho_D(t)\quad\text{decrease}.
\]
Using \eqref{F-15}, we can conclude that
\[
\lim_{t\to\infty}\rho_A(t)=\rho_A^\infty,\quad \lim_{t\to\infty}\rho_C(t)=\rho_C^\infty
\]
for some $\rho_A^\infty>0.6$ and $\rho_C^\infty>0.2$ which satisfies $\rho_A^\infty+\rho_C^\infty=1$, and
\[
\lim_{t\to\infty}\rho_B(t)=\lim_{t\to\infty}\rho_D(t)=0.
\]
We can say that the final state $(\rho_A^\infty, 0, \rho_C^\infty, 0)$ corresponds to the bipolar state of the Kuramoto model. We can summarize the above results as the following proposition.
\begin{proposition}\label{P3.2}
Let $(\rho, S)$ be a solution to system \eqref{D-10}, and the network topology is given as \eqref{F-12-0}. If $(j, l)\in E$, then \eqref{F-14} holds.
\end{proposition}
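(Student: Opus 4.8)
The plan is to follow the same two-step template already used in Section~\ref{sec:3.2} for the complete-graph case, now carried out on a general graph with the edge weights \eqref{F-12-0}. The first step is to reduce the Hamiltonian system \eqref{F-2-2}/\eqref{D-10} to the first-order form \eqref{F-12}: assuming the initial compatibility relation \eqref{F-4-1-0-0-0}, Lemma~\ref{LA.2} gives $S_j(t)=\kappa\rho_j(t)$ for all $t\ge 0$, and substituting this into \eqref{F-2-2} together with $\omega_{ij}\in\{0,1\}$ collapses the density equation to $\dot\rho_j=\kappa\sum_{l\in N_j}\theta_{jl}(\rho_j-\rho_l)$, exactly \eqref{F-12}. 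This is the bookkeeping step and should be essentially a restatement of what precedes the proposition.

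The second step is the Lyapunov argument. I would compute $\frac{d}{dt}\sum_{j=1}^n\rho_j^2$ along \eqref{F-12}; the key algebraic identity is that, because the weights are symmetric and the graph undirected, the double sum symmetrizes to $\kappa\sum_{(j,l)\in E}\theta_{jl}(\rho_j-\rho_l)^2\ge 0$, using the nonnegativity assumption $\theta_{jl}\ge 0$. Hence $t\mapsto\sum_j\rho_j^2$ is nondecreasing and bounded above (it lies in $[1/n,1]$ on the simplex $\mathcal P$), so it converges; its derivative is nonnegative and, since the $\rho_j$ stay in a compact set and $\theta$ is Lipschitz, uniformly continuous, so Barbalat's lemma \cite{Bar} forces $\lim_{t\to\infty}\sum_{(j,l)\in E}\theta_{jl}(\rho_j-\rho_l)^2=0$. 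Each summand being nonnegative, every term vanishes in the limit, i.e. $\lim_{t\to\infty}\theta_{jl}(t)(\rho_j(t)-\rho_l(t))^2=0$ for each edge $(j,l)\in E$. Finally, invoking the third structural assumption on $\theta$ — that $\theta_{jl}=0$ only when $\min(\rho_j,\rho_l)=0$ — together with continuity of $\tilde\theta$, one concludes that for each edge either $\rho_j-\rho_l\to 0$ or $\min(\rho_j,\rho_l)\to 0$, which is precisely \eqref{F-14}.

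The only genuinely delicate point is the last implication: from $\theta_{jl}(\rho_j-\rho_l)^2\to 0$ one must rule out the oscillatory scenario in which neither factor tends to zero but their product does. I would argue by contradiction along a subsequence: if $\rho_j(t_k)-\rho_l(t_k)\to c\neq 0$ and $\min(\rho_j(t_k),\rho_l(t_k))\to a>0$ along some sequence $t_k\to\infty$, then by continuity of $\tilde\theta$ and the monotonicity assumption $\theta_{jl}(t_k)=\tilde\theta(\min(\rho_j,\rho_l))(t_k)\to\tilde\theta(a)>0$, so the product converges to $\tilde\theta(a)c^2>0$, contradicting the limit being zero; the general case follows by passing to a convergent subsequence of the bounded trajectory $(\rho_1,\dots,\rho_n)$. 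I expect this compactness-plus-continuity step to be the main (mild) obstacle, and everything else to be routine given the earlier lemmas.
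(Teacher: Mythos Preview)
Your proposal is correct and follows exactly the paper's own argument: reduce to the first-order system \eqref{F-12}, compute $\frac{d}{dt}\sum_j\rho_j^2=\kappa\sum_{(j,l)\in E}\theta_{jl}(\rho_j-\rho_l)^2\ge 0$, invoke Barbalat's lemma to kill each nonnegative summand, and read off \eqref{F-14} edge by edge. You are in fact more careful than the paper on the last implication---the text simply asserts it without addressing the oscillatory scenario you flag.
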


\subsection{Second order dynamics on the two points graph}\label{sec:3.3}
In this subsection, we study emergent behavior of system \eqref{F-4} with general initial configuration when $n=2$. System \eqref{F-4} with $n=2$ can be written as 
\begin{equation*}
\left\{\begin{aligned}
\frac{d\rho_1}{dt}&=(S_1-S_2)\theta_{12},\\
\frac{d\rho_2}{dt}&=-(S_1-S_2)\theta_{12},\\
\frac{dS_1}{dt}&=\kappa^2(\rho_1-\rho_2)\theta_{12}+\frac{1}{2}(-(S_1-S_2)^2+\kappa^2(\rho_1-\rho_2)^2)\frac{\partial\theta_{12}}{\partial \rho_1},\\
\frac{dS_2}{dt}&=-\kappa^2(\rho_1-\rho_2)\theta_{12}+\frac{1}{2}(-(S_1-S_2)^2+\kappa^2(\rho_1-\rho_2)^2)\frac{\partial\theta_{12}}{\partial \rho_2}.
\end{aligned}\right.
\end{equation*}
This yields
\begin{equation*}
\left\{\begin{aligned}
&\frac{d}{dt}(\rho_1-\rho_2)=2(S_1-S_2)\theta_{12},\\
&\frac{d}{dt}(S_1-S_2)=2\kappa^2(\rho_1-\rho_2)\theta_{12}-((S_1-S_2)^2-\kappa^2(\rho_1-\rho_2)^2)\left(\frac{\partial\theta_{12}}{\partial \rho_1}-\frac{\partial\theta_{12}}{\partial\rho_2}\right).
\end{aligned}\right.
\end{equation*}
Now, we substitute $\rho_1=r$, $\rho_2=1-r$, $S=S_1-S_2$, and $\theta_{12}=\theta$ into the above system to get
\begin{equation}\label{F-15-1}
\left\{\begin{aligned}
&\frac{dr}{dt}=S\theta,\\
&\frac{dS}{dt}=2\kappa^2(2r-1)\theta-\left(S^2-\kappa^2(2r-1)^2\right)\frac{d\theta}{dr}.
\end{aligned}\right.
\end{equation}
Here we used
\[
\frac{d\theta}{dr}=\frac{\partial \theta_{12}}{\partial \rho_1}\frac{d\rho_1}{dr}+\frac{\partial \theta_{12}}{\partial \rho_2}\frac{d\rho_2}{dr}=\frac{\partial \theta_{12}}{\partial \rho_1}-\frac{\partial \theta_{12}}{\partial \rho_2}.
\]
Recall that the following Hamiltonian
\begin{align}\label{F-15-2}
\mathcal{H}=\frac{\theta}{2}(S^2-\kappa^2(2r-1)^2)
\end{align}
is a constant of motion. If $\mathcal{H}(0)=:\mathcal{H}_0=0$, then it can be reduced to a gradient flow formulation(subsection \ref{sec:3.2}). Let assume that $\mathcal{H}_0>0$. Then, we get
\[
S^2=\kappa^2(2r-1)^2+\frac{2\mathcal{H}_0}{\theta}>0.
\]
This implies that $S(t)\neq0$ for all $t$. From this fact, if we assume the initial data satisfies $S_0>0$ then $S(t)>0$ for all $t$. This yields
\[
S=\sqrt{\kappa^2(2r-1)^2+\frac{2\mathcal{H}_0}{\theta}}.
\]
Since $S\theta>0$ for all $t$, we know that $r$ increases. Also, we know that $S\geq \sqrt{\frac{2\mathcal{H}_0}{\theta_0}}>0$. This implies that 
\[
\lim_{t\to\infty}r(t)=1.
\]

Now, we are interested in the convergence rate of $r$. We can easily deduce that $S$ tends to infinity as $r\to 1$, since $\theta\big|_{r=1}=0$. Since $\kappa^2(2r-1)^2$ is bounded, we can say that 
\[
S\simeq \sqrt{\frac{2\mathcal{H}_0}{\theta}}.
\]
Then, we have
\[
\frac{dr}{dt}=S\theta\simeq \sqrt{2\mathcal{H}_0\theta}.
\]
Since $r$ converges to one, we assume have that $\min(r, 1-r)=1-r$. We assume that $\theta\simeq (1-r)^\alpha$ for some $\alpha$. This yields
\[
\frac{\dot{r}}{(1-r)^{\alpha/2}}\simeq \sqrt{2\mathcal{H}_0}
\]
If $\alpha=2$, we get
\[
\ln(1-r_0)-\ln(1-r(t))\simeq \sqrt{2\mathcal{H}_0}t,
\]
and this implies
\[
1-r(t)\simeq(1-r_0)e^{-\sqrt{2\mathcal{H}_0}t}.
\]
If $\alpha>2$, then we have
\[
\left[-\left(1-r\right)^{1-\alpha/2}\right]_0^t\simeq \left(\frac{\alpha}{2}-1\right)\sqrt{2\mathcal{H}_0}t.
\]
This yields
\[
(1-r(t))^{1-\alpha/2}\simeq(1-r_0)^{1-\alpha/2}+ \left(\frac{\alpha}{2}-1\right)\sqrt{2\mathcal{H}_0}t,
\]
or equivalently
\[
1-r(t)\simeq\frac{1}{\left((1-r_0)^{1-\alpha/2}+ \left(\frac{\alpha}{2}-1\right)\sqrt{2\mathcal{H}_0}t\right)^{(\alpha/2-1)^{-1}}}\simeq \frac{C}{t^{(\alpha/2-1)^{-1}}}.
\]
If $0<\alpha<2$, then the solution blow up in a finite time. We can summarize the above results as follows:
\begin{align}\label{F-15-3}
1-r(t)\simeq\begin{cases}
\text{blow up in a finite time}\quad&\text{when}\quad 0<\alpha<2,\\
Ce^{-\sqrt{2\mathcal{H}_0}t}&\text{when}\quad \alpha=2,\\
Ct^{(1-\alpha/2)^{-1}}\quad&\text{when}\quad 2< \alpha,
\end{cases}\quad\text{as}\quad t\to\infty.
\end{align}

We can summarize the above result below. 
\begin{proposition}
Let $(r, S)$ be a solution of \eqref{F-15-1} with the initial data $(r_0, S_0)$ which satisfies the following:
\[
\mathcal{H}_0=\mathcal{H}(0)>0,\quad S_0>0,
\]
where $\mathcal{H}$ is defined in \eqref{F-15-2}. Then, $r(t)$ satisfies \eqref{F-15-3}.
\end{proposition}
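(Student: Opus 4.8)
The plan is to follow exactly the informal derivation preceding the proposition, but to organize it into three clean steps: (i) reduce the coupled system \eqref{F-15-1} to a scalar ODE for $r$ using the conservation of $\mathcal{H}$; (ii) establish monotonicity and convergence $r(t)\to 1$; (iii) extract the sharp asymptotics of $1-r(t)$ by comparing $\theta$ with $(1-r)^\alpha$ and solving the resulting separable ODE. The conservation law \eqref{F-15-2}, namely that $\mathcal{H}=\frac{\theta}{2}(S^2-\kappa^2(2r-1)^2)$ is a constant of motion along solutions of \eqref{F-15-1}, should be recorded first (a one-line computation differentiating $\mathcal{H}$ along the flow and cancelling terms); I would either cite that it was noted above or include the short verification.

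First I would use $\mathcal{H}(t)\equiv\mathcal{H}_0>0$ to write $S^2=\kappa^2(2r-1)^2+\tfrac{2\mathcal{H}_0}{\theta}$. Since the right-hand side is strictly positive (here $\theta>0$ because $r\in(0,1)$ forces $\min(r,1-r)>0$, using the positivity assumption on $\theta$), $S$ never vanishes, so by continuity and $S_0>0$ we get $S(t)>0$ for all $t$; hence $S=\sqrt{\kappa^2(2r-1)^2+2\mathcal{H}_0/\theta}$ and $\dot r=S\theta>0$, so $r$ is strictly increasing. As $r$ is increasing and bounded above by $1$, it has a limit $r^\infty\le 1$; if $r^\infty<1$ then $\theta$ stays bounded below by a positive constant and $\dot r=S\theta\ge\sqrt{2\mathcal{H}_0\theta}$ stays bounded below by a positive constant, contradicting convergence. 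Hence $r(t)\to 1$. This part is routine.

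For the rate, substitute the expression for $S$ into $\dot r=S\theta=\sqrt{\kappa^2(2r-1)^2\theta^2+2\mathcal{H}_0\theta}$. As $r\to1$ we have $\theta\to0$, so $\theta^2$ is negligible against $\theta$ and $\dot r\simeq\sqrt{2\mathcal{H}_0\,\theta}$; with the hypothesis $\theta\simeq(1-r)^\alpha$ near $r=1$ this gives $\dot r\,(1-r)^{-\alpha/2}\simeq\sqrt{2\mathcal{H}_0}$. Integrating the separable relation yields the three regimes of \eqref{F-15-3}: logarithmic integral and exponential decay when $\alpha=2$; a power law $t^{(1-\alpha/2)^{-1}}$ when $\alpha>2$; and finite-time blow-up of the integral when $0<\alpha<2$. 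The main obstacle is making the ``$\simeq$'' rigorous: I would phrase the hypotheses as two-sided bounds $c_1(1-r)^\alpha\le\theta\le c_2(1-r)^\alpha$ for $r$ close to $1$ and $|\kappa^2(2r-1)^2\theta|\le C\theta$ there, deduce two-sided bounds $a_1\sqrt{\theta}\le\dot r\le a_2\sqrt{\theta}$, then apply a Grönwall/comparison argument with the explicit solutions of $\dot x=C(1-x)^{\alpha/2}$ (the analogue of Lemma \ref{L6.4} with exponent $\alpha/2$) to sandwich $1-r(t)$ between two functions of the stated form, which establishes \eqref{F-15-3} up to constants.
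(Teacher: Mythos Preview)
Your proposal is correct and follows essentially the same approach as the paper's derivation: conservation of $\mathcal{H}$ to eliminate $S$, positivity and monotonicity of $r$, the approximation $\dot r\simeq\sqrt{2\mathcal{H}_0\theta}$ near $r=1$, and separation of variables under $\theta\simeq(1-r)^\alpha$. If anything, your treatment is slightly more careful than the paper's (you give a clean contradiction argument for $r^\infty=1$ and propose two-sided bounds to make the $\simeq$ rigorous), but the structure and key ideas are identical.
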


\begin{remark}
We can compare two results \eqref{F-10-1} and \eqref{F-15-3}. The case \eqref{F-10-1} occurs when $\mathcal{H}_0=0$ and the case \eqref{F-15-3} occurs when $\mathcal{H}_0>0$. So, we can check the existence of bifurcation at $\mathcal{H}_0=0$.
\end{remark}
We will check general case $n>2$ in the numerical example section (Section \ref{sec:5.3}).

\subsection{Hopf-Cole transform on discrete graphs}\label{sec:3.4}
In this subsection, we reduce the following system \eqref{D-2} onto discrete graphs and provide its Hopf-Cole transform. For the notation simplicities, we set
\[
\partial_k\mathcal{F}=\frac{\partial\mathcal{F}}{\partial\rho_k},\quad \partial^2_{kl}\mathcal{F}=\frac{\partial^2\mathcal{F}}{\partial\rho_k\partial\rho_l}\quad \forall~1\leq k, l\leq n.
\]
From a similar argument which used in \cite{C-L-Z}, we can discretize system \eqref{D-2} as follows:
\begin{align}\label{D-10}
\begin{cases}
\displaystyle \frac{d\rho_j}{dt}-\sum_{l\in N_j}\omega_{jl}(S_j-S_l)\theta_{jl}=0,\vspace{0.2cm}\\
\displaystyle \frac{dS_j}{dt}+\frac{1}{2}\sum_{l\in N_j}\omega_{jl}(S_j-S_l)^2\frac{\partial \theta_{jl}}{\partial \rho_j}=\frac{\partial}{\partial\rho_j}\left(\frac{1}{2}\sum_{(k, l)\in E}\frac{\omega_{kl}}{2}(\partial_k\mathcal{F}-\partial_l\mathcal{F})^2\theta_{kl}\right).
\end{cases}
\end{align}
Also, the initial condition of continuous domain version system \eqref{D-2}
\[
S_0=-\frac{\delta \mathcal{F}(\rho_0)}{\delta\rho_0}
\]
 can be reduced as follows:
\begin{align}\label{D-11}
S_j(0)=-\partial_j\mathcal{F}(0):=-\frac{\partial\mathcal{F}(\rho)}{\partial\rho_j}\Bigg|_{\rho=\rho(0)},\quad \rho=(\rho_1, \cdots, \rho_n).
\end{align}
Here, $\mathcal{F}$ is a kind of potential functional of the system. When  the initial condition of system \eqref{D-10} is given as \eqref{D-11}, then the system can be reduce to the gradient flow system with potential $\mathcal{F}$. On the other hand, if the initial condition satisfies
\begin{align}\label{D-11-0}
S_j(0)=-\partial_j\mathcal{F}(0):=-\frac{\partial\mathcal{F}(\rho)}{\partial\rho_j}\Bigg|_{\rho=\rho(0)},\quad \rho=(\rho_1, \cdots, \rho_n).
\end{align}
then the system can be reduced to the gradient flow system with potential $-\mathcal{F}$. This phenomena can be interpreted as that initial conditions \eqref{D-11} and \eqref{D-11-0} yield a gradient descent flow and a gradient upscent flow, respectively.\\

We can also express system \eqref{D-10} as follows:
\begin{align}\label{D-11-1}
\begin{cases}
\displaystyle\frac{d}{dt}\rho_j=\frac{\partial}{\partial S_j}\mathcal{H}(\rho, S),\vspace{0.2cm}\\
\displaystyle\frac{d}{dt}S_j=-\frac{\partial}{\partial \rho_j}\mathcal{H}(\rho, S),
\end{cases}
\end{align}
where the Hamiltonian is given as follows:
\[
\mathcal{H}(\rho, S)=\frac{1}{4}\sum_{(i, j)\in E}\omega_{ij}\theta_{ij}\Big((S_i-S_j)^2-(\partial_i\mathcal{F}-\partial_j\mathcal{F})^2  \Big).
\]
So we can conclude that the discretized system \eqref{D-10} preserve the property of Hamiltonian systems, and this means the system is well-discretized. From relation \eqref{D-11-1}, we can easily check that
\[
\frac{d}{dt}\mathcal{H}(\rho(t), S(t))=0
\]
when $(\rho(t), S(t))$ is a solution to system \eqref{D-10}. \\

Now, we want to obtain the discretized version of the generalized Hopf-Cole transformation \eqref{D-3-1}. To mimic substitution \eqref{D-3-0} of the continuous version, we consider the the following substitution:
\begin{align}\label{D-11-2}
\begin{cases}
\partial_j\mathcal{F}=\xi_j+\xi^*_j,\\
S_j=\xi_j-\xi^*_j,
\end{cases}\quad\Longleftrightarrow\qquad
\begin{cases}
\xi_j=\frac{1}{2}(\partial_j\mathcal{F}+S_j),\\
\xi^*_j=\frac{1}{2}(\partial_j\mathcal{F}-S_j).
\end{cases}
\end{align}
Then, we have the following proposition.
\begin{proposition}[Hopf-Cole transform on a finite graph]
Let $(\rho, S)$ be a solution to \eqref{D-10}, and the substitution $(\xi, \xi^*)$ is given as \eqref{D-11-2}.  Then $(\xi, \xi^*)$ follows the following dynamics:
\begin{align}\label{D-12}
\begin{cases}
\displaystyle\frac{d\xi_j}{dt}=\sum_{(k, l)\in E}\partial^2_{jk}\mathcal{F}\omega_{kl}\theta_{kl}(\xi_k-\xi_l)
+\sum_{l\in N_j}\omega_{jl}(\xi_l^*-\xi_j^*)(\xi_l-\xi_j)\frac{\partial \theta_{jl}}{\partial \rho_j},\\
\displaystyle\frac{d\xi^*_j}{dt}=-\sum_{(k, l)\in E}\partial^2_{jk}\mathcal{F}\omega_{kl}\theta_{kl}(\xi_k^*-\xi_l^*)
-\sum_{l\in N_j}\omega_{jl}(\xi_l^*-\xi_j^*)(\xi_l-\xi_j)\frac{\partial \theta_{jl}}{\partial \rho_j}.
\end{cases}
\end{align}
\end{proposition}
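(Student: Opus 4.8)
The plan is to substitute the change of variables \eqref{D-11-2} directly into the Hamiltonian system \eqref{D-10} (equivalently \eqref{D-11-1}) and regroup terms. Since $\xi_j = \tfrac12(\partial_j\mathcal{F} + S_j)$ and $\xi_j^* = \tfrac12(\partial_j\mathcal{F} - S_j)$, differentiating in $t$ gives
\[
\frac{d\xi_j}{dt} = \frac12\left(\sum_{k=1}^n \partial^2_{jk}\mathcal{F}\,\frac{d\rho_k}{dt} + \frac{dS_j}{dt}\right),\qquad
\frac{d\xi_j^*}{dt} = \frac12\left(\sum_{k=1}^n \partial^2_{jk}\mathcal{F}\,\frac{d\rho_k}{dt} - \frac{dS_j}{dt}\right),
\]
so the first task is to insert the two equations of \eqref{D-10} for $\frac{d\rho_k}{dt}$ and $\frac{dS_j}{dt}$ and then express everything back in terms of $\xi,\xi^*$ using $S_i - S_j = (\xi_i-\xi_j) - (\xi_i^*-\xi_j^*)$ and $\partial_i\mathcal{F}-\partial_j\mathcal{F} = (\xi_i-\xi_j)+(\xi_i^*-\xi_j^*)$.

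First I would rewrite the $\frac{d\rho_k}{dt}$ term: since $\frac{d\rho_k}{dt} = \sum_{l\in N_k}\omega_{kl}(S_k-S_l)\theta_{kl}$, summing against $\partial^2_{jk}\mathcal F$ over $k$ produces $\sum_{(k,l)\in E}\partial^2_{jk}\mathcal{F}\,\omega_{kl}\theta_{kl}(S_k-S_l)$, and I substitute $S_k - S_l = (\xi_k-\xi_l)-(\xi_k^*-\xi_l^*)$. Next I would handle $\frac{dS_j}{dt} = \partial_j\big(\tfrac14\sum_{(k,l)\in E}\omega_{kl}\theta_{kl}((S_k-S_l)^2-(\partial_k\mathcal F-\partial_l\mathcal F)^2)\big) - \tfrac12\sum_{l\in N_j}\omega_{jl}(S_j-S_l)^2\frac{\partial\theta_{jl}}{\partial\rho_j}$. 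The crucial algebraic observation is the factorization
\[
(S_k-S_l)^2 - (\partial_k\mathcal F-\partial_l\mathcal F)^2 = -4(\xi_k-\xi_l)(\xi_k^*-\xi_l^*),
\]
which turns the Hamiltonian into $\mathcal{H} = -\sum_{(k,l)\in E}\omega_{kl}\theta_{kl}(\xi_k-\xi_l)(\xi_k^*-\xi_l^*)$ and makes the $\rho$-derivative split cleanly into a piece hitting $\theta_{kl}$ (giving the $\frac{\partial\theta_{jl}}{\partial\rho_j}$ terms) and a piece hitting the explicit $\partial\mathcal F$-dependence inside $\xi,\xi^*$, which recombines with the $\frac{d\rho_k}{dt}$ contribution. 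I would then check that the $\theta$-derivative terms coming from $\frac{dS_j}{dt}$ — namely $-\tfrac12\sum_{l\in N_j}\omega_{jl}\big[(S_j-S_l)^2 + 4(\xi_j-\xi_l)(\xi_j^*-\xi_l^*)\big]\frac{\partial\theta_{jl}}{\partial\rho_j}$ after combining the explicit and implicit derivatives — collapse, using $(S_j-S_l)^2 = \big((\xi_j-\xi_l)-(\xi_j^*-\xi_l^*)\big)^2$, to exactly $\mp\sum_{l\in N_j}\omega_{jl}(\xi_l^*-\xi_j^*)(\xi_l-\xi_j)\frac{\partial\theta_{jl}}{\partial\rho_j}$ as in \eqref{D-12}.

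The main obstacle I anticipate is bookkeeping the $\rho$-derivative of the Hamiltonian correctly: $\partial_j$ acts on $\theta_{kl}$ (nonzero only when $j\in\{k,l\}$), on the $(S_k-S_l)^2$ term (which is constant in $\rho$ since $S$ and $\rho$ are independent phase-space coordinates — so this contributes nothing), and on $(\partial_k\mathcal F-\partial_l\mathcal F)^2 = ((\xi_k-\xi_l)+(\xi_k^*-\xi_l^*))^2$ through $\partial_j(\partial_k\mathcal F) = \partial^2_{jk}\mathcal F$. Tracking the last contribution and verifying it matches the $\sum_{(k,l)\in E}\partial^2_{jk}\mathcal F\,\omega_{kl}\theta_{kl}(\xi_k-\xi_l)$ structure (with the $\xi^*$ cross-terms cancelling against the $\frac{d\rho}{dt}$ piece) is the delicate step; a symmetry argument using $\partial^2_{jk}\mathcal F = \partial^2_{kj}\mathcal F$ and the antisymmetry of $\xi_k - \xi_l$ under $k\leftrightarrow l$ should close it. Everything else is routine substitution, and the $\xi^*$ equation follows by the same computation with $S_j \to -S_j$, i.e. $\xi\leftrightarrow\xi^*$, together with an overall sign flip from $\frac{dS_j}{dt}\to -\frac{dS_j}{dt}$.
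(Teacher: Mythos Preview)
Your proposal is correct and follows essentially the same route as the paper's proof: both compute $\tfrac{d\xi_j}{dt}=\tfrac12\big(\sum_k\partial^2_{jk}\mathcal F\,\dot\rho_k+\dot S_j\big)$, substitute \eqref{D-10}, and then regroup using exactly the difference-of-squares identity $(S_k-S_l)^2-(\partial_k\mathcal F-\partial_l\mathcal F)^2=-4(\xi_k-\xi_l)(\xi_k^*-\xi_l^*)$ together with the symmetry $\partial^2_{jk}\mathcal F=\partial^2_{kj}\mathcal F$ to separate the $\theta_{kl}$-terms from the $\partial\theta_{jl}/\partial\rho_j$-terms. The paper simply writes out the long chain of equalities explicitly rather than packaging the computation through the Hamiltonian, but the algebra and the key cancellations are identical.
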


\begin{proof}
From \eqref{D-10} and \eqref{D-11-2}, we have the following calculation:
\begin{align*}
\frac{d\xi_j}{dt}&=\frac{1}{2}\frac{d\partial_j\mathcal{F}}{dt}+\frac{1}{2}\frac{dS_j}{dt}\\
&=\frac{1}{2}\sum_k\left(\partial^2_{jk}\mathcal{F}\frac{d\rho_k}{dt}\right)+\frac{1}{2}\frac{dS_j}{dt}\\
&=\frac{1}{2}\sum_{\substack{k,\\l\in N_k}}\partial^2_{jk}\mathcal{F}\omega_{kl}(S_k-S_l)\theta_{kl}-\frac{1}{4}\sum_{l\in N_j}\omega_{jl}(S_j-S_l)^2\frac{\partial \theta_{jl}}{\partial \rho_j}\\
&\quad+\frac{1}{4}\sum_{(k, l)\in E}\frac{\omega_{kl}}{2}(\partial_k\mathcal{F}-\partial_l\mathcal{F})^2\frac{\partial\theta_{kl}}{\partial \rho_j}+\frac{1}{4}\sum_{(k, l)\in E}\omega_{kl}(\partial_k\mathcal{F}-\partial_l\mathcal{F})(\partial^2_{kj}\mathcal{F}-\partial^2_{lj}\mathcal{F})\theta_{kl}\\
&=\frac{1}{2}\sum_{(k, l)\in E}\partial^2_{jk}\mathcal{F}\omega_{kl}\theta_{kl}(S_k-S_l)+\frac{1}{2}\sum_{(k, l)\in E}\partial^2_{kj}\mathcal{F}\omega_{kl}\theta_{kl}(\partial_k\mathcal{F}-\partial_l\mathcal{F})\\
&\quad-\frac{1}{4}\sum_{l\in N_j}\omega_{jl}\Big((S_j-S_l)^2-(\partial_k\mathcal{F}-\delta_l\mathcal{F})^2\Big)\frac{\partial \theta_{jl}}{\partial \rho_j}\\
&=-\frac{1}{2}\sum_{(k, l)\in E}\partial^2_{jk}\mathcal{F}\omega_{kl}\theta_{kl}(-S_k-\partial_k\mathcal{F}+S_l+\partial_l\mathcal{F})\\
&\quad-\frac{1}{4}\sum_{l\in N_j}\omega_{jl}(S_j-\partial_j\mathcal{F}-S_l+\partial_l\mathcal{F})(S_j+\partial_j\mathcal{F}-S_l-\partial_l\mathcal{F})\frac{\partial \theta_{jl}}{\partial \rho_j}\\
&=\sum_{(k, l)\in E}\partial^2_{jk}\mathcal{F}\omega_{kl}\theta_{kl}(\xi_k-\xi_l)
+\sum_{l\in N_j}\omega_{jl}(\xi_l^*-\xi_j^*)(\xi_l-\xi_j)\frac{\partial \theta_{jl}}{\partial \rho_j}.
\end{align*}
From a similar calculation, we have
\begin{align*}
\frac{d\xi^*_j}{dt}&=\frac{1}{2}\frac{d\partial\mathcal{F}_j}{dt}-\frac{1}{2}\frac{dS_j}{dt}\\
&=\frac{1}{2}\sum_k\left(\partial^2_{jk}\mathcal{F}\frac{d\rho_k}{dt}\right)-\frac{1}{2}\frac{dS_j}{dt}\\
&=\frac{1}{2}\sum_{\substack{k,\\l\in N_k}}\partial^2_{jk}\mathcal{F}\omega_{kl}(S_k-S_l)\theta_{kl}+\frac{1}{4}\sum_{l\in N_j}\omega_{jl}(S_j-S_l)^2\frac{\partial \theta_{jl}}{\partial \rho_j}\\
&\quad-\frac{1}{4}\sum_{(k, l)\in E}\frac{\omega_{kl}}{2}(\partial_k\mathcal{F}-\partial_l\mathcal{F})^2\frac{\partial\theta_{kl}}{\partial \rho_j}-\frac{1}{4}\sum_{(k, l)\in E}\omega_{kl}(\partial_k\mathcal{F}-\partial_l\mathcal{F})(\partial^2_{kj}\mathcal{F}-\partial^2_{lj}\mathcal{F})\theta_{kl}\\
&=\frac{1}{2}\sum_{(k, l)\in E}\partial^2_{jk}\mathcal{F}\omega_{kl}\theta_{kl}(S_k-S_l)-\frac{1}{2}\sum_{(k, l)\in E}\partial^2_{kj}\mathcal{F}\omega_{kl}\theta_{kl}(\partial_k\mathcal{F}-\partial_l\mathcal{F})\\
&\quad+\frac{1}{4}\sum_{l\in N_j}\omega_{jl}\Big((S_j-S_l)^2-(\partial_k\mathcal{F}-\partial_l\mathcal{F})^2\Big)\frac{\partial \theta_{jl}}{\partial \rho_j}\\
&=-\frac{1}{2}\sum_{(k, l)\in E}\partial^2_{jk}\mathcal{F}\omega_{kl}\theta_{kl}(-S_k+\partial_k\mathcal{F}+S_l-\partial_l\mathcal{F})\\
&\quad+\frac{1}{4}\sum_{l\in N_j}\omega_{jl}(S_j-\partial_j\mathcal{F}-S_l+\partial_l\mathcal{F})(S_j+\partial_j\mathcal{F}-S_l-\partial_l\mathcal{F})\frac{\partial \theta_{jl}}{\partial \rho_j}\\
&=-\sum_{(k, l)\in E}\partial^2_{jk}\mathcal{F}\omega_{kl}\theta_{kl}(\xi_k^*-\xi_l^*)
-\sum_{l\in N_j}\omega_{jl}(\xi_l^*-\xi_j^*)(\xi_l-\xi_j)\frac{\partial \theta_{jl}}{\partial \rho_j}.
\end{align*}
So, we have the desired result.
\end{proof}

If the initial data $(\rho(0), S(0))$ of system \eqref{D-10} satisfies \eqref{D-11}, then the initial data $(\xi, \xi^*)$ of system \eqref{D-12} should satisfy
\begin{align}\label{D-13}
\xi_j(0)=0\quad\forall~j=1, 2, \cdots, n.
\end{align}
Here, $n$ is the number of vertices. 
\begin{lemma}\label{L3.1}
Let $(\xi, \xi^*)$ be a solution to system \eqref{D-12} with the initial data $(\xi(0), \xi^*(0))$. If the initial data satisfies \eqref{D-13}, we have
\[
\xi_j(0)=0\quad\forall~j=1, 2, \cdots, n,\quad t\geq0.
\]
Furthermore, $\xi^*$ follows the following dynamics:
\[
\frac{d}{dt}\xi_j^*=-\sum_{(k, l)\in E}\partial^2\mathcal{F}_{jk}\theta_{kl}(\xi_k^*-\xi_l^*).
\]
\end{lemma}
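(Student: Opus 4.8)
The plan is to show that $\xi \equiv 0$ is forward-invariant under the dynamics \eqref{D-12}, and then read off the reduced equation for $\xi^*$ as an immediate consequence. The key observation is structural: in the first equation of \eqref{D-12}, namely
\[
\frac{d\xi_j}{dt}=\sum_{(k, l)\in E}\partial^2_{jk}\mathcal{F}\,\omega_{kl}\theta_{kl}(\xi_k-\xi_l)
+\sum_{l\in N_j}\omega_{jl}(\xi_l^*-\xi_j^*)(\xi_l-\xi_j)\frac{\partial \theta_{jl}}{\partial \rho_j},
\]
every term on the right-hand side is linear and homogeneous in the differences $\{\xi_k - \xi_l\}$. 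Hence if $\xi_k = 0$ for all $k$ at a given time, then $\dot\xi_j = 0$ for all $j$ at that time. So $\{\xi = 0\}$ is an equilibrium slice for the $\xi$-component (regardless of what $\xi^*$ and $\rho$ are doing), and the uniqueness part of the ODE existence theory pins the solution to that slice once it starts there.

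First I would set up the argument rigorously: the full system \eqref{D-12} together with the evolution of $\rho$ (which $\theta_{kl}$ and $\partial^2_{jk}\mathcal F$ depend on) is an ODE system in $(\rho, \xi, \xi^*)$ with a locally Lipschitz right-hand side, under the standing regularity assumptions on $\theta$ and on $\mathcal F$ (here $\mathcal F$ is smooth, so $\partial^2_{jk}\mathcal F$ is at least continuous). Given initial data with $\xi_j(0) = 0$ for all $j$, I would exhibit the function $t \mapsto (\rho(t), 0, \xi^*(t))$, where $\xi^*$ solves
\[
\frac{d}{dt}\xi_j^* = -\sum_{(k, l)\in E}\partial^2_{jk}\mathcal{F}\,\omega_{kl}\theta_{kl}(\xi_k^*-\xi_l^*)
\]
coupled with the $\rho$-equation, and check that this triple satisfies \eqref{D-12}: the $\xi$-equation holds because substituting $\xi \equiv 0$ kills both sums on its right (each factor $\xi_k - \xi_l$ and $\xi_l - \xi_j$ vanishes) and also makes $\dot\xi_j = 0$, consistent with $\xi_j \equiv 0$; and the $\xi^*$-equation of \eqref{D-12} reduces, upon setting $\xi \equiv 0$, exactly to the displayed reduced equation, since the second sum $-\sum_{l\in N_j}\omega_{jl}(\xi_l^*-\xi_j^*)(\xi_l-\xi_j)\frac{\partial\theta_{jl}}{\partial\rho_j}$ vanishes. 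By uniqueness of solutions to the initial value problem, this is \emph{the} solution, so $\xi_j(t) = 0$ for all $j$ and all $t \geq 0$, and $\xi^*$ obeys the claimed dynamics.

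I do not expect a serious obstacle here — the lemma is essentially a bookkeeping statement reflecting the fact, noted just before it, that the initial condition \eqref{D-11} corresponds to a pure gradient flow, which lives on the $\xi = 0$ locus. The only point requiring a modicum of care is justifying uniqueness: one should either invoke local Lipschitz continuity of the right-hand side of the coupled $(\rho,\xi,\xi^*)$ system (valid wherever the solution stays in the region where $\theta$ and its derivatives are Lipschitz, e.g. on the simplex $\mathcal P$), or, alternatively, argue directly by a Grönwall estimate on $\sum_j \xi_j(t)^2$: differentiating and using that the right-hand side of the $\xi$-equation is bounded by a constant times $\max_{k,l}|\xi_k - \xi_l|$ along the (bounded) trajectory gives $\frac{d}{dt}\sum_j \xi_j^2 \leq C \sum_j \xi_j^2$, whence $\sum_j \xi_j^2 \equiv 0$. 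Either route closes the argument; I would present the Grönwall version since it avoids discussing the regularity of the full coupled flow and makes transparent which structural feature — homogeneity in the $\xi$-differences — is doing the work.
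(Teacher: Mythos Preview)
Your proposal is correct and follows essentially the same approach as the paper: observe that the right-hand side of the $\xi$-equation in \eqref{D-12} is linear and homogeneous in $\xi$ (the paper phrases this as $\frac{d\xi}{dt}=A(t)\xi$ for a time-dependent matrix $A(t)$), then invoke uniqueness to conclude $\xi\equiv 0$, after which the reduced $\xi^*$-dynamics follow by substitution. Your write-up is more detailed---spelling out the Gr\"onwall alternative and the regularity needed for uniqueness---but the underlying argument is the same.
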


\begin{proof}
From the first equation of system \eqref{D-12}, we can express 
\[
\frac{d\xi}{dt}(t)=A(t)\xi(t)
\]
for some time-dependent matrix of size $n\times n$. From the uniqueness of system, we can obtain the desired result.
\end{proof}

\begin{remark}
If we discretize system \eqref{D-10}, then such numeric scheme may not preserve $S_j+\delta\mathcal{F}_j$ due to the numeric errors. However, if we use \eqref{D-12} to discretize system, then $\xi_j(0)\equiv 0$ for all $j$ guarantee that $\xi_j\equiv0$ for all time. 
\end{remark}
Corresponding lemma for Lemma \ref{L3.1} on continuous domain can be organized as Lemma \ref{LA.3}.

\section{Two points graph: analytical solutions. }\label{sec:4}
\setcounter{equation}{0}
In this section, we study system \eqref{D-10} with $n=2$ analytically. 
%{\color{red} Organize it below. Firstly, Hamiltonian system is given by Lagrangian. Second, Lagrangian can be solved explicitly. Third: examples}

System \eqref{D-10} with $n=2$ can be written as
\begin{align}\label{F-17-0}
\begin{cases}
\displaystyle\frac{d\rho_1}{dt}-(S_1-S_2)\theta_{12}=0,\vspace{0.2cm}\\
\displaystyle\frac{d\rho_2}{dt}-(S_2-S_1)\theta_{12}=0,\vspace{0.2cm}\\
\displaystyle\frac{dS_1}{dt}+\frac{1}{2}(S_1-S_2)^2\frac{\partial \theta_{12}}{\partial \rho_1}=\frac{\partial}{\partial\rho_1}\left(\frac{1}{2}(\partial_1\mathcal{F}-\partial_2\mathcal{F})^2\theta_{12}\right),\vspace{0.2cm}\\
\displaystyle\frac{dS_2}{dt}+\frac{1}{2}(S_1-S_2)^2\frac{\partial\theta_{12}}{\partial\rho_2}=\frac{\partial}{\partial\rho_2}\left(\frac{1}{2}(\partial_1\mathcal{F}-\partial_2\mathcal{F})^2\theta_{12}\right).
\end{cases}
\end{align}
Recall that the Hamiltonian of system \eqref{F-17-0} can be expressed as
\[
\mathcal{H}(\rho_1, \rho_2, S_1, S_2)=\frac{1}{2}\theta_{12}((S_1-S_2)^2-(\partial_1\mathcal{F}-\partial_2\mathcal{F})^2).
\]
If we substitute 
\[
\rho_1=r,\quad \rho_2=1-r,\quad S-1-S_2=S,\quad \theta_{12}=\theta,
\]
then system \eqref{F-17-0} yields the following dynamics of $r$ and $S$:

\[
\begin{cases}
\displaystyle\frac{dr}{dt}-S\theta=0,\vspace{0.2cm}\\
\displaystyle\frac{dS}{dt}+\frac{1}{2}S^2\frac{d\theta}{dr}=\frac{d}{dr}\left(\frac{ \theta}{2}\left(\frac{d\mathcal{F}}{dr}\right)^2\right)
\end{cases}
\]
and the corresponding Hamiltonian is
\[
\mathcal{H}(r, S)=\frac{\theta}{2}\left(S^2-\left(\frac{d\mathcal{F}}{dr}\right)^2\right).
\]

The corresponding Lagrangian is
\[
\mathcal{L}(r, S)=\frac{\theta}{2}\left(S^2+\left(\frac{d\mathcal{F}}{dr}\right)^2\right).
\]

We first demonstrate that the Hamiltonian flow \eqref{F-17-0} has a variational formulation. 
\begin{proposition}
Consider the following Lagrangian action  minimization problem:
\begin{equation}\label{A}
\begin{aligned}
\displaystyle\mathcal{A}(r_0,r_1):=\displaystyle\text{minimize}\quad&\int_0^1\left(\frac{\dot{r}^2}{2\theta}+\frac{\theta}{2}\left(\frac{d\mathcal{F}}{dr}\right)^2\right)dt\\
\text{subject to}\quad&r(0)=r_0,\quad r(1)=r_1.
\end{aligned}
\end{equation}
The critical point system of minimization problem \eqref{A} satisfies the Hamiltonian flow \eqref{F-17-0}. 
\end{proposition}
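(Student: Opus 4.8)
The plan is to recognize \eqref{A} as a Hamilton-type variational principle whose integrand is the Legendre dual of the reduced Hamiltonian $\mathcal{H}(r,S)=\frac{\theta}{2}\big(S^{2}-(\frac{d\mathcal{F}}{dr})^{2}\big)$, and then to check that its Euler--Lagrange equation reproduces, after the natural change of momentum, the reduced $(r,S)$-system written just above the proposition — which in turn is \eqref{F-17-0} under the substitution $\rho_{1}=r$, $\rho_{2}=1-r$, $S=S_{1}-S_{2}$. Throughout I write $\theta=\theta(r):=\theta_{12}(r,1-r)$ and, abusing notation, $\mathcal{F}=\mathcal{F}(r):=\mathcal{F}(r,1-r)$, so that $\frac{d\mathcal{F}}{dr}=\partial_{1}\mathcal{F}-\partial_{2}\mathcal{F}$ and $\frac{d\theta}{dr}=\partial_{\rho_1}\theta_{12}-\partial_{\rho_2}\theta_{12}$ on the constraint surface $\rho_{2}=1-\rho_{1}$; this is just the chain rule, and it is the identity already used in Section \ref{sec:3.3}. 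Note also that the integrand of \eqref{A} is exactly $\mathcal{L}(r,\dot r/\theta)$ with $\mathcal{L}$ the Lagrangian displayed before the proposition.

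First I would introduce the conjugate momentum of $L(r,\dot r)=\frac{\dot r^{2}}{2\theta}+\frac{\theta}{2}(\frac{d\mathcal{F}}{dr})^{2}$, namely $p:=\partial L/\partial\dot r=\dot r/\theta$, and observe that $p$ is exactly the variable $S$: the relation $\dot r=S\theta$ is literally the first equation of the reduced system. Then I would compute $\partial L/\partial r$ — only the product rule is needed, together with the fact that the $(\frac{d\mathcal{F}}{dr})^{2}$-term depends on $r$ alone — getting $\partial L/\partial r=-\frac{\dot r^{2}}{2\theta^{2}}\frac{d\theta}{dr}+\frac{d}{dr}\big(\frac{\theta}{2}(\frac{d\mathcal{F}}{dr})^{2}\big)$, while $\frac{d}{dt}\partial_{\dot r}L=\frac{d}{dt}(\dot r/\theta)=\ddot r/\theta-\frac{\dot r^{2}}{\theta^{2}}\frac{d\theta}{dr}$ since $\dot\theta=\frac{d\theta}{dr}\dot r$. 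Subtracting, the Euler--Lagrange equation reads
\[
\frac{\ddot r}{\theta}-\frac{\dot r^{2}}{\theta^{2}}\frac{d\theta}{dr}=-\frac{\dot r^{2}}{2\theta^{2}}\frac{d\theta}{dr}+\frac{d}{dr}\Big(\frac{\theta}{2}\Big(\frac{d\mathcal{F}}{dr}\Big)^{2}\Big),
\]
and since its left-hand side is $\dot S$ and $\dot r^{2}/\theta^{2}=S^{2}$, this is exactly $\dot S+\tfrac12 S^{2}\frac{d\theta}{dr}=\frac{d}{dr}\big(\frac{\theta}{2}(\frac{d\mathcal{F}}{dr})^{2}\big)$; equivalently $(\dot r,\dot S)=(\partial_{S}\mathcal{H},-\partial_{r}\mathcal{H})$. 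Thus the critical point system of \eqref{A} is precisely the reduced Hamiltonian system.

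It remains to lift this back to the four variables of \eqref{F-17-0}. Adding the first two equations of \eqref{F-17-0} gives $\frac{d}{dt}(\rho_{1}+\rho_{2})=0$, so imposing $\rho_{2}=1-\rho_{1}$ is dynamically consistent; subtracting the two $S$-equations and applying the chain-rule identity above to $\theta_{12}$ and to $\tfrac12(\partial_{1}\mathcal{F}-\partial_{2}\mathcal{F})^{2}\theta_{12}$ turns $\frac{d}{dt}(S_{1}-S_{2})$ into exactly the reduced $\dot S$ equation, while the half-sum of those equations merely fixes $S_{1}+S_{2}$ (a free constant of integration). Since only the difference $S_{1}-S_{2}$ enters \eqref{F-17-0}, the assignment $(r,S)\mapsto(\rho_{1},\rho_{2},S_{1},S_{2})$ is a lift unique modulo the harmless gauge $(S_{1},S_{2})\mapsto(S_{1}+c,S_{2}+c)$, and critical curves of \eqref{A} correspond exactly to solutions of \eqref{F-17-0}. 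I do not anticipate a genuine obstacle: the argument is a one-line Legendre transform plus a routine Euler--Lagrange computation, and the only point requiring care is the bookkeeping that identifies $\partial_{1}\mathcal{F}-\partial_{2}\mathcal{F}$ with $\frac{d\mathcal{F}}{dr}$ (and likewise for $\theta$) on the constraint surface, together with the observation that the splitting of $S$ into $S_{1}$ and $S_{2}$ is immaterial.
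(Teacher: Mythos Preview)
Your argument is correct: identifying the conjugate momentum $S=\partial L/\partial\dot r=\dot r/\theta$ and writing out the Euler--Lagrange equation gives exactly the reduced $(r,S)$-system, and the chain-rule bookkeeping you note converts that system into the difference of the $S$-equations in \eqref{F-17-0}, with the sum playing no role.

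The paper, however, proceeds differently. Rather than computing the Euler--Lagrange equation in $r$ directly, it introduces the change of variable $x=x(r)$ defined by $dx/dr=1/\sqrt{\theta}$, which flattens the kinetic term and turns the Lagrangian into the standard mechanical form $\tfrac12\dot x^{2}+\tfrac12(d\mathcal{F}/dx)^{2}$; the second-order equation $\ddot x=\tfrac{d}{dx}\big(\tfrac12(d\mathcal{F}/dx)^{2}\big)$ is then recorded (equation \eqref{z-1-0}). Your direct Legendre-transform route is cleaner and more self-contained for the statement at hand, and it makes the link to \eqref{F-17-0} fully explicit. The paper's substitution, by contrast, is less a proof of this proposition than a preparation for the next one: in the $x$-variable the action is a textbook quadratic functional, which is what allows the authors to write down closed-form solutions and the explicit minimum $\mathcal{A}(r_0,r_1)$ in the subsequent result. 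So both arguments are valid; yours establishes the proposition more transparently, while the paper's change of variable is the key step for the explicit integration that follows.
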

\begin{proof}
Since $S=\frac{\dot{r}}{\theta}$, we can rewrite the Hamiltonian as follows:
\[
\mathcal{H}(r, \dot{r})=\frac{\dot{r}^2}{2\theta}-\frac{\theta}{2}\left(\frac{d\mathcal{F}}{dr}\right)^2,
\]
where we consider the above Hamiltonian as a function of $r$ and $\dot{r}$, since $\theta$ is a function of $r$. Since the Hamiltonian is constant along time-evolution, we assume that
\[
\frac{\dot{r}^2}{2\theta}-\frac{\theta}{2}\left(\frac{d\mathcal{F}}{dr}\right)^2=\mathcal{H}_0.
\]
For the notation simplicity, we set
\[
r(0)=r_0\quad\text{and}\quad r(1)=r_1.
\]
We introduce the following substitution:
\[
\frac{dx}{dr}=\frac{1}{\sqrt{\theta}}.
\]
Then, we get
\begin{align}\label{z-1-0}
\frac{d^2}{dt^2}x&=\frac{d}{dt}\left(\frac{dx}{dr}\dot{r}\right)=\frac{d}{dt}\frac{\dot{r}}{\sqrt{\theta}}
=\frac{d}{dr}\left(\frac{\theta}{2}\left(\frac{d\mathcal{F}}{dr}\right)^2\right)\sqrt{\theta}=\frac{d}{dx}\left(\frac{1}{2}\left(\frac{d\mathcal{F}}{dx}\right)^2\right)
\end{align}
From the definition of the action, we have
\begin{align}
\mathcal{A}&=\int_0^1\mathcal{L}(r, \dot{r})dt
=\int_0^1\left(\frac{\dot{r}^2}{2\theta}+\frac{\theta}{2}\left(\frac{d\mathcal{F}}{dr}\right)^2\right)dt
=\int_0^1\left(\frac{1}{2}\dot{x}^2+\frac{1}{2}\left(\frac{d\mathcal{F}}{dx}\right)^2\right)dt.
\end{align}
\end{proof}
By the following proposition, we can obtain the explicit form of the solution.
\begin{proposition}
If $\mathcal{F}$ satisfies 
\begin{align}\label{z-4-0}
\mathcal{F}(r)=\frac{1}{2}\left(\int_{1/2}^r\frac{1}{\sqrt{\theta(r_*)}}dr_*\right)^2,
\end{align}
then we have the following explicit form of the solution $r(t)$:
\begin{align}\label{z-4}
\int_{1/2}^{r(t)}\frac{dr_*}{\sqrt{\theta(r_*)}}=\frac{1}{\sinh 1}\left(\sinh(1-t)\int_{1/2}^{r_0}\frac{dr_*}{\sqrt{\theta(r_*)}}+\sinh t\int_{1/2}^{r_1}\frac{dr_*}{\sqrt{\theta(r_*)}}\right).
\end{align}
In addition, the least action minimizer satisfies 
\[
 \mathcal{A}(r_0, r_1)=\frac{\cosh1}{\sinh1}\left(\int_{1/2}^{r_0}\frac{dr_*}{\sqrt{\theta(r_*)}}-\int_{1/2}^{r_1}\frac{dr_*}{\sqrt{\theta(r_*)}}\right)^2+\frac{2\cosh 1-1}{\sinh 1}\int_{1/2}^{r_0}\frac{dr_*}{\sqrt{\theta(r_*)}}\int_{1/2}^{r_1}\frac{dr_*}{\sqrt{\theta(r_*)}}.
\]
\end{proposition}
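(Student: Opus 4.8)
The plan is to use the change of variables $x = \int_{1/2}^{r}\frac{dr_*}{\sqrt{\theta(r_*)}}$ introduced in the previous proposition, under which the Lagrangian action $\mathcal{A}$ becomes the classical action $\int_0^1\bigl(\tfrac12\dot x^2 + \tfrac12(d\mathcal{F}/dx)^2\bigr)\,dt$ and the Euler--Lagrange equation becomes $\ddot x = \tfrac{d}{dx}\bigl(\tfrac12(d\mathcal{F}/dx)^2\bigr)$, i.e. $\ddot x = (d\mathcal{F}/dx)(d^2\mathcal{F}/dx^2)$. The first step is to compute $\mathcal{F}$ in the $x$-variable: by \eqref{z-4-0}, $\mathcal{F}(r) = \tfrac12 x^2$ where $x = x(r)$, so $d\mathcal{F}/dx = x$ and $d^2\mathcal{F}/dx^2 = 1$. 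Hence the Euler--Lagrange equation collapses to the linear ODE $\ddot x = x$. With the boundary data $x(0) = X_0 := \int_{1/2}^{r_0}\frac{dr_*}{\sqrt{\theta(r_*)}}$ and $x(1) = X_1 := \int_{1/2}^{r_1}\frac{dr_*}{\sqrt{\theta(r_*)}}$, the unique solution is $x(t) = \frac{1}{\sinh 1}\bigl(\sinh(1-t)\,X_0 + \sinh t\, X_1\bigr)$, which is exactly \eqref{z-4} after substituting back the definition of $x$.

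The second step is to evaluate the action at this minimizer. Since $\mathcal{L} = \tfrac12\dot x^2 + \tfrac12 x^2$ along the critical path and $x$ solves $\ddot x = x$, I would integrate by parts: $\int_0^1 \dot x^2\,dt = [x\dot x]_0^1 - \int_0^1 x\ddot x\,dt = [x\dot x]_0^1 - \int_0^1 x^2\,dt$, so $\mathcal{A} = \int_0^1 \tfrac12(\dot x^2 + x^2)\,dt = \tfrac12[x\dot x]_0^1 = \tfrac12\bigl(x(1)\dot x(1) - x(0)\dot x(0)\bigr)$. Then I just need $\dot x(0)$ and $\dot x(1)$ from the explicit formula: $\dot x(t) = \frac{1}{\sinh 1}\bigl(-\cosh(1-t)\,X_0 + \cosh t\, X_1\bigr)$, giving $\dot x(0) = \frac{-\cosh 1\,X_0 + X_1}{\sinh 1}$ and $\dot x(1) = \frac{-X_0 + \cosh 1\,X_1}{\sinh 1}$. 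Plugging in, $\mathcal{A} = \frac{1}{2\sinh 1}\bigl(X_1(-X_0 + \cosh 1\, X_1) - X_0(-\cosh 1\, X_0 + X_1)\bigr) = \frac{1}{2\sinh 1}\bigl(\cosh 1\,(X_0^2 + X_1^2) - 2 X_0 X_1\bigr)$.

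The final step is purely algebraic: I rewrite $\frac{1}{2\sinh 1}\bigl(\cosh 1\,(X_0^2 + X_1^2) - 2X_0 X_1\bigr)$ in the form claimed in the statement, namely $\frac{\cosh 1}{\sinh 1}(X_0 - X_1)^2 + \frac{2\cosh 1 - 1}{\sinh 1}X_0 X_1$. Indeed $\frac{\cosh 1}{\sinh 1}(X_0-X_1)^2 = \frac{\cosh 1}{\sinh 1}(X_0^2 + X_1^2) - \frac{2\cosh 1}{\sinh 1}X_0X_1$, and adding $\frac{2\cosh 1 - 1}{\sinh 1}X_0X_1$ restores the cross term to $-\frac{1}{\sinh 1}X_0X_1$; this does \emph{not} match $\frac{1}{2\sinh 1}\bigl(\cosh 1(X_0^2+X_1^2) - 2X_0X_1\bigr)$ unless one is careful with the factor of $2$, so I would double-check the normalization of $\mathcal{A}$ (whether the stated formula absorbs a factor $2$ from $\mathcal{F}$ or from the $\tfrac12$ in the action) and adjust constants accordingly — this bookkeeping of multiplicative constants is the only real pitfall, since the ODE step is immediate once \eqref{z-4-0} is used. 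No step presents a genuine mathematical obstacle: the substitution linearizes everything, and the rest is solving $\ddot x = x$ with Dirichlet data plus an integration by parts.
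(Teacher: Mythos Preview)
Your derivation of the solution formula \eqref{z-4} is essentially identical to the paper's: both use the substitution $x=\int_{1/2}^r\theta^{-1/2}\,dr_*$, recognize that the hypothesis \eqref{z-4-0} says $\mathcal{F}=\tfrac12 x^2$, reduce the Euler--Lagrange equation (equation \eqref{z-1-0} in the previous proposition) to $\ddot x=x$, and solve with Dirichlet data.

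For the action, your integration-by-parts shortcut $\mathcal{A}=\tfrac12[x\dot x]_0^1$ is cleaner than the paper's route, which substitutes the explicit solution into the integrand and evaluates $\int_0^1(\cosh^2(1-t)+\sinh^2(1-t))\,dt$ and the cross term by hand. Your bookkeeping concern is well-founded, but the discrepancy is not in your normalization: your value
\[
\mathcal{A}=\frac{\cosh 1}{2\sinh 1}\bigl(X_0^2+X_1^2\bigr)-\frac{1}{\sinh 1}X_0X_1
\]
is correct, while the stated formula expands to $\frac{\cosh 1}{\sinh 1}(X_0^2+X_1^2)-\frac{1}{\sinh 1}X_0X_1$, off by a factor of $2$ in the diagonal term. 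The paper's own computation slips at the step where $\int_0^1(\cosh^2(1-t)+\sinh^2(1-t))\,dt=\tfrac{\sinh 2}{2}$ is effectively recorded as $\sinh 2$. A sanity check confirms your value: for $X_0=X_1=X$ the constant path $x\equiv X$ has action $\tfrac12 X^2$, and the minimizer must lie below this; your formula gives $\frac{\cosh 1-1}{\sinh 1}X^2\approx 0.462\,X^2$, whereas the stated one gives $\approx 1.78\,X^2$. Note that the subsequent divergence $D(r_0,r_1)=\frac{1}{2\sinh 1}(X_0-X_1)^2$ is unaffected, since the diagonal terms cancel in $\mathcal{A}(r_0,r_1)-\tfrac12\mathcal{A}(r_0,r_0)-\tfrac12\mathcal{A}(r_1,r_1)$. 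So: trust your computation and do not ``adjust constants'' to match the statement.
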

\begin{proof}
From the definition of $x(r)$, we get
\[
\mathcal{F}(r)=\frac{1}{2}x(r)^2.
\]
Since $x$ solves \eqref{z-1-0} and $\mathcal{F}(r)=\frac{1}{2} x(r)^2$, we have the general form of $x(t)$ as follows:
\[
x(t)=a\cosh t+b\sinh t,
\]
where $a=x(0)$ and $b=\dot{x}(0)$. Since $x(0)$ and $x(1)$ are fixed, we have
\[
a=x(0),\quad b=\frac{x(1)-x(0)\cosh 1}{\sinh 1}.
\]
Then, we have
\[
x(t)=x(0)\cosh t+\left(\frac{x(1)-x(0)\cosh 1}{\sinh 1}\right)\sinh t=x(0)\frac{\sinh(1-t)}{\sinh 1}+x(1)\frac{\sinh t}{\sinh1}.
\]
Finally, we substitute 
\[
x(t)=\int_{1/2}^{r(t)}\frac{dr_*}{\sqrt{\theta(r_*)}}
\]
into the above equation to get the desired result.
From \eqref{z-4}, we know that
\[
\frac{\dot{r}}{\sqrt{\theta(r_*)}}=\frac{1}{\sinh 1}\left(-\cosh(1-t)\int_{1/2}^{r_0}\frac{dr_*}{\sqrt{\theta(r_*)}}+\cosh t\int_{1/2}^{r_1}\frac{dr_*}{\sqrt{\theta(r_*)}}\right)
\]
and
\[
\frac{d\mathcal{F}}{dr}=\frac{1}{\sqrt{\theta(r)}}\int^r_{1/2}\frac{dr_*}{\sqrt{\theta(r_*)}}=\frac{1}{\sinh 1\sqrt{\theta(r)}}\left(\sinh(1-t)\int_{1/2}^{r_0}\frac{dr_*}{\sqrt{\theta(r_*)}}+\sinh t\int_{1/2}^{r_1}\frac{dr_*}{\sqrt{\theta(r_*)}}\right).
\]
We substitute \eqref{z-4} into $\mathcal{A}$ as follows:
\begin{align*}
&\int_0^1\left(
\frac{\dot{r}^2}{2\theta}+\frac{\theta}{2}\left(\frac{d\mathcal{F}}{dr}\right)^2
\right)dt\\
&=\frac{1}{2\sinh^21}\int_0^1 \left(-\cosh(1-t)\int_{1/2}^{r_0}\frac{dr_*}{\sqrt{\theta(r_*)}}+\cosh t\int_{1/2}^{r_1}\frac{dr_*}{\sqrt{\theta(r_*)}}\right)^2dt\\
&+\frac{1}{2\sinh^21}\int_0^1\left(\sinh(1-t)\int_{1/2}^{r_0}\frac{dr_*}{\sqrt{\theta(r_*)}}+\sinh t\int_{1/2}^{r_1}\frac{dr_*}{\sqrt{\theta(r_*)}}\right)^2dt\\
&=\frac{1}{2\sinh^21}\left(\left(\int_{1/2}^{r_0}\frac{dr_*}{\sqrt{\theta(r_*)}}\right)^2+\left(\int_{1/2}^{r_1}\frac{dr_*}{\sqrt{\theta(r_*)}}\right)^2\right)\int_0^1(\cosh^2(1-t)+\sinh^2(1-t))dt\\
&-\frac{1}{\sinh^21}\int_0^1(\cosh(1-t)\cosh t-\sinh(1-t)\sinh t)\int_{1/2}^{r_0}\frac{dr_*}{\sqrt{\theta(r_*)}}\int_{1/2}^{r_1}\frac{dr_*}{\sqrt{\theta(r_*)}}\\
&=\frac{\sinh 2}{2\sinh^21}\left(\left(\int_{1/2}^{r_0}\frac{dr_*}{\sqrt{\theta(r_*)}}\right)^2+\left(\int_{1/2}^{r_1}\frac{dr_*}{\sqrt{\theta(r_*)}}\right)^2\right)
-\frac{\sinh 1}{\sinh^21}\int_{1/2}^{r_0}\frac{dr_*}{\sqrt{\theta(r_*)}}\int_{1/2}^{r_1}\frac{dr_*}{\sqrt{\theta(r_*)}}\\
&=\frac{\cosh 1}{\sinh1}\left(\left(\int_{1/2}^{r_0}\frac{dr_*}{\sqrt{\theta(r_*)}}\right)^2+\left(\int_{1/2}^{r_1}\frac{dr_*}{\sqrt{\theta(r_*)}}\right)^2\right)
-\frac{1}{\sinh1}\int_{1/2}^{r_0}\frac{dr_*}{\sqrt{\theta(r_*)}}\int_{1/2}^{r_1}\frac{dr_*}{\sqrt{\theta(r_*)}}\\
&=\frac{\cosh1}{\sinh1}\left(\int_{1/2}^{r_0}\frac{dr_*}{\sqrt{\theta(r_*)}}-\int_{1/2}^{r_1}\frac{dr_*}{\sqrt{\theta(r_*)}}\right)^2+\frac{2\cosh 1-1}{\sinh 1}\int_{1/2}^{r_0}\frac{dr_*}{\sqrt{\theta(r_*)}}\int_{1/2}^{r_1}\frac{dr_*}{\sqrt{\theta(r_*)}}.
\end{align*}
So, we can conclude that the 
\[
 \mathcal{A}(r_0, r_1)=\frac{\cosh1}{\sinh1}\left(\int_{1/2}^{r_0}\frac{dr_*}{\sqrt{\theta(r_*)}}-\int_{1/2}^{r_1}\frac{dr_*}{\sqrt{\theta(r_*)}}\right)^2+\frac{2\cosh 1-1}{\sinh 1}\int_{1/2}^{r_0}\frac{dr_*}{\sqrt{\theta(r_*)}}\int_{1/2}^{r_1}\frac{dr_*}{\sqrt{\theta(r_*)}}.
\]
We can easily check that $\mathcal{A}(r_0, r_1)\geq0$ for any $r_0$ and $r_1$ and equality only holds for $r_0=r_1=\frac{1}{2}$, since
\[
\cosh 1 x^2+(2\cosh 1-1)xy+\cosh 1 y^2\geq0
\]
and the equality only holds for $x=y=0$($\because D=(2\cosh 1-1)^2-4\cosh^21<0$). 
\end{proof}
\begin{proposition}
We formulate a divergence function on a two point graph:
\begin{equation*}
\begin{split}
    D(r_0, r_1)&=\mathcal{A}(r_0, r_1)-\frac{1}{2}\mathcal{A}(r_0, r_0)-\frac{1}{2}\mathcal{A}(r_1, r_1)\\
&=\frac{1}{2\sinh1}\left(\int_{r_0}^{r_1}\frac{dr_*}{\sqrt{\theta(r_*)}}\right)^2.
\end{split}
\end{equation*}
\end{proposition}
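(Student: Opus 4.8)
The plan is to read off $D(r_0,r_1)$ directly from the closed-form expression for $\mathcal{A}(r_0,r_1)$ established in the preceding proposition, so the argument is purely algebraic. First I would abbreviate
\[
x_i:=\int_{1/2}^{r_i}\frac{dr_*}{\sqrt{\theta(r_*)}},\qquad i=0,1,
\]
so that the previous proposition reads
\[
\mathcal{A}(r_0,r_1)=\frac{\cosh1}{\sinh1}(x_0-x_1)^2+\frac{2\cosh1-1}{\sinh1}\,x_0x_1,
\]
and in particular $\mathcal{A}(r_i,r_i)=\frac{2\cosh1-1}{\sinh1}x_i^2$ since the quadratic term $(x_0-x_1)^2$ vanishes on the diagonal. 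Note also that $x_1-x_0=\int_{r_0}^{r_1}\frac{dr_*}{\sqrt{\theta(r_*)}}$ by additivity of the integral.

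Next I would substitute these three evaluations into the definition
\[
D(r_0,r_1)=\mathcal{A}(r_0,r_1)-\tfrac12\mathcal{A}(r_0,r_0)-\tfrac12\mathcal{A}(r_1,r_1),
\]
pull out the common factor $1/\sinh1$, and collect the terms that are not already proportional to $(x_0-x_1)^2$. The only computation is the elementary identity
\[
(2\cosh1-1)\,x_0x_1-\tfrac{2\cosh1-1}{2}(x_0^2+x_1^2)=-\tfrac{2\cosh1-1}{2}(x_0-x_1)^2,
\]
which, combined with the $\cosh1\,(x_0-x_1)^2$ term, leaves the coefficient $\cosh1-\tfrac{2\cosh1-1}{2}=\tfrac12$ in front of $(x_0-x_1)^2$. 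Hence $D(r_0,r_1)=\frac{1}{2\sinh1}(x_0-x_1)^2$, which is the claimed formula once $x_0-x_1$ is re-expanded as the integral from $r_0$ to $r_1$.

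There is no genuine obstacle here: the content of the proposition is entirely inherited from the previous one, and the proof is bookkeeping. The only points requiring mild care are the sign and limits in $x_1-x_0=\int_{r_0}^{r_1}\frac{dr_*}{\sqrt{\theta(r_*)}}$ (the orientation ambiguity is harmless since the quantity is squared) and the verification that the diagonal value $\mathcal{A}(r_i,r_i)$ is obtained correctly by setting $r_0=r_1$ in the quadratic form. One may optionally add that $D\ge0$, with equality if and only if $r_0=r_1$, which is immediate from the final formula and confirms that $D$ behaves as a Bregman-type divergence on the two point graph.
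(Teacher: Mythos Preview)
Your proposal is correct and follows essentially the same approach as the paper: both proofs substitute the closed-form expression for $\mathcal{A}(r_0,r_1)$ from the preceding proposition and reduce the computation to the coefficient identity $\cosh1-\tfrac{2\cosh1-1}{2}=\tfrac12$. Your abbreviation $x_i=\int_{1/2}^{r_i}\frac{dr_*}{\sqrt{\theta(r_*)}}$ streamlines the bookkeeping, but the argument is otherwise identical.
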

\begin{remark}
We remark that the above divergence function $D(r_0,r_1)$ is a generalization of the discrete Wasserstein metric; see related studies in AI inference problems \cite{sd}. In our selection of $\mathcal{F}$ in \eqref{z-4-0},  $D$ recovers a two point discrete Wasserstein distances derived in \cite{Mass}.   
\end{remark}

\begin{proof}
The proof follows from a direct calculation. 
\begin{align*}
D(r_0, r_1)&= \mathcal{A}(r_0, r_1)-\frac{1}{2}\mathcal{A}(r_0, r_0)-\frac{1}{2}\mathcal{A}(r_1, r_1)\\
&=\frac{\cosh1}{\sinh1}\left(\int_{1/2}^{r_0}\frac{dr_*}{\sqrt{\theta(r_*)}}-\int_{1/2}^{r_1}\frac{dr_*}{\sqrt{\theta(r_*)}}\right)^2+\frac{2\cosh 1-1}{\sinh 1}\int_{1/2}^{r_0}\frac{dr_*}{\sqrt{\theta(r_*)}}\int_{1/2}^{r_1}\frac{dr_*}{\sqrt{\theta(r_*)}}\\
&\quad-\frac{2\cosh 1-1}{2\sinh 1}\left[\left(\int_{1/2}^{r_0}\frac{dr_*}{\sqrt{\theta(r_*)}}\right)^2+\left(\int_{1/2}^{r_1}\frac{dr_*}{\sqrt{\theta(r_*)}}\right)^2\right]\\
&=\left(\frac{\cosh1}{\sinh1}-\frac{2\cosh 1-1}{2\sinh 1}\right)\left(\int_{1/2}^{r_0}\frac{dr_*}{\sqrt{\theta(r_*)}}-\int_{1/2}^{r_1}\frac{dr_*}{\sqrt{\theta(r_*)}}\right)^2\\
&=\frac{1}{2\sinh1}\left(\int_{1/2}^{r_0}\frac{dr_*}{\sqrt{\theta(r_*)}}-\int_{1/2}^{r_1}\frac{dr_*}{\sqrt{\theta(r_*)}}\right)^2.
\end{align*}
\end{proof}
%{\color{red} Hansol: I don't know how to call SD in other terminology}

Now, we introduce some examples $\mathcal{F}$ which exhibits aggregation phenomena. Recall that the Kuramoto type potential \eqref{F-3} is minimized when 
\[
\rho_i=\begin{cases}
1\quad\text{when }i=j,\\
0\quad\text{when }i\neq j
\end{cases}
\]
for some fixed index $1\leq j\leq n$. Entropy functionals satisfy similar property. So, we review various types of entropies.

\begin{example}\label{R3.2}
Let $\rho=(\rho_1, \cdots, \rho_n)$ be a probability distribution given on $n$-point graph. Then, we have the following entropies.\\

\noindent(1) Shannon entropy:
\begin{align*}
H(\rho):=-\sum_{i=1}^n \rho_i \log \rho_i.
\end{align*}

\noindent(2) R\'{e}nyi entropy:
\begin{align*}
H_\alpha(\rho)=\frac{1}{1-\alpha}\log\left(\sum_{i=1}^n\rho_i^\alpha\right),
\end{align*}
where $\alpha$ is a non-negative constant with $\alpha\neq1$. From the L'H$\hat{o}$spital theorem, we can easily check that R\'{e}nyi entropy converges to Shannon entropy as $\alpha$ goes to 1. i.e.
\[
\lim_{\alpha\to 1} H_\alpha(\rho)=H(\rho).
\]
Similarly, we can also check that
\begin{align*}
\lim_{\alpha\to \infty}H_\alpha(\rho)&=\lim_{\alpha\to\infty}\frac{1}{1-\alpha}\log\left(\sum_{i=1}^n\rho_i^\alpha\right)=\lim_{\alpha\to\infty}\frac{\sum_{i=1}^n \log(\rho_i) \rho_i^{\alpha}}{-\sum_{i=1}^n \rho_i^\alpha}\\
&=-\lim_{\alpha\to\infty}\frac{\sum_{i=1}^n\log(\rho_i)(\rho_i/\max_j \rho_j)^\alpha}{\sum_{i=1}^n(\rho_i/\max_j \rho_j)^\alpha}=-\log(\max_i \rho_i).
\end{align*}
This implies the R\'{e}nyi entropy converges to the min-entropy.
\\

\noindent(3) Tsallis entropy:
\begin{align*}
S_q(\rho)=\frac{1}{q-1}\left(1-\sum_{i=1}^n\rho_i^q\right),
\end{align*}
where $q\geq1$. From the L'H$\hat{o}$spital theorem, we can also check that Tsallis entropy converges to Shannon entropy as $q\searrow 1$.\\

From a simple calculation, we can also prove that both $H_\alpha(P)$ and $S_q(P)$ is minimized when one of $p_i$ is one and the others are zero. Since these properties are similar, we can expect that $\mathcal{F}$ can be replaced by either R\'{e}nyi entropy or Tsallis entropy. However, in this paper, we focus on $\mathcal{F}$ defined in \eqref{F-3}.
\end{example}

Now, we consider some entropies as $\mathcal{F}$ introduced in Example \ref{R3.2} as potentials. Since we are considering two points graphs, we set $\rho(r)=(r, 1-r)$ with $0\leq r\leq 1$. Using the entropies that introduced in Example \ref{R3.2}, we define the functional $\mathcal{F}$ which can be expressed \eqref{z-4-0} as follows:
\begin{align}\label{z-5}
\mathcal{F}(r)=\begin{cases}
-\Big(H(\rho(r))-H(\rho(1/2))\Big)=\log 2+r\log r+(1-r)\log(1-r)\quad&\text{Shannon entropy potential},\\
-\Big(H_\alpha(\rho(r))-H_\alpha(\rho(1/2))\Big)=\log 2-\frac{1}{1-\alpha}\log(r^\alpha+(1-r)^\alpha)\quad&\text{R\'{e}nyi entropy potential},\\
-\Big(S_q(\rho(r))-S_q(\rho(1/2))\Big)=\frac{1}{q-1}(r^q+(1-r)^q-2^{1-q})
\quad&\text{Tsallis entropy potential}.
\end{cases}
\end{align}

For the Shannon entropy potential, we subtract $H(\rho(1/2))$ from $H(\rho(r))$. Since $\mathcal{F}(r)$ is positive and $H(\rho(r))-H(\rho(1/2))$ is negative, we put the minus sign. Not only the Shannon entropy, we also applied this argument in R\'{e}nyi entropy and Tsallis entropy to obtain R\'{e}nyi entropy potential and Tsallis entropy potential, respectively. From \eqref{z-4-0} and \eqref{z-5}, we can also define $\theta$, respectively. Relation \eqref{z-4-0} yields
\[
2\mathcal{F}(r)=\left(\int_{1/2}^r\frac{1}{\sqrt{\theta(r_*)}}dr_*\right)^2.
\]
This implies that
\[
\int_{1/2}^r\frac{1}{\sqrt{\theta(r_*)}}dr_*=\begin{cases}
\displaystyle-\sqrt{2\mathcal{F}(r)}\quad&\displaystyle\text{if}\quad 0\leq r\leq \frac{1}{2},\\
\displaystyle\sqrt{2\mathcal{F}(r)}\quad&\displaystyle\text{if}\quad\frac{1}{2}\leq r\leq 1,
\end{cases}
\]
and this again yields
\[
\frac{1}{\sqrt{\theta(r)}}=\begin{cases}
\displaystyle-\frac{d}{dr}\sqrt{2\mathcal{F}(r)}\quad&\displaystyle\text{if}\quad 0\leq r\leq \frac{1}{2},\vspace{0.2cm}\\
\displaystyle\frac{d}{dr}\sqrt{2\mathcal{F}(r)}\quad&\displaystyle\text{if}\quad\frac{1}{2}\leq r\leq 1.
\end{cases}
\]
Finally, we can obtain that
\begin{align}\label{z-6}
\theta(r)=\left(\frac{d}{dr}\sqrt{2\mathcal{F}(r)}\right)^{-2}=\frac{2\mathcal{F}(r)}{\left(\frac{d}{dr}\mathcal{F}(r)\right)^2}.
\end{align}
We substitute \eqref{z-5} into \eqref{z-6} to obtain
\[
\theta(r)=\begin{cases}
\displaystyle\frac{2(\log 2+r\log r+(1-r)\log(1-r))}{\left(\log r-\log(1-r)\right)^2}&\quad \text{Shannon entropy},\\
\displaystyle\frac{2(1-\alpha)^2(r^\alpha+(1-r)^\alpha)^2\left(\log 2-\frac{1}{1-\alpha}\log(r^\alpha+(1-r)^\alpha)\right)}{\alpha^2\left(
r^{\alpha-1}-(1-r)^{\alpha-1}
\right)^2} &\quad\text{R\'{e}nyi entropy},\\
\displaystyle\frac{2(q-1)(r^q+(1-r)^q-2^{1-q})}{q^2\left(
r^{q-1}-(1-r)^{q-1}
\right)^2}&\quad \text{Tsallis entropy}.
\end{cases}
\]
The minimum action $\mathcal{A}$ can be expressed as $\mathcal{F}$ as follows:
\[
 \mathcal{A}(r_0, r_1)=
 \begin{cases}
\displaystyle \frac{2\cosh 1}{\sinh 1}\left(
\mathcal{F}(r_0)+\mathcal{F}(r_1)
 \right)+\frac{2}{\sinh 1}\sqrt{\mathcal{F}(r_0)\mathcal{F}(r_1)}\quad\text{if }r_0\leq\frac{1}{2}\leq r_1~~\text{or}~~r_1\leq \frac{1}{2}\leq r_0,\vspace{0.2cm}\\
\displaystyle  \frac{2\cosh 1}{\sinh 1}\left(
\mathcal{F}(r_0)+\mathcal{F}(r_1)
 \right)-\frac{2}{\sinh 1}\sqrt{\mathcal{F}(r_0)\mathcal{F}(r_1)}\quad\text{otherwise}.
 \end{cases} 
\]

We notice that $\mathcal{A}$ is the value function for discrete Schr{\"o}dinger bridge system \cite{CLMZ}. See also motivations in \cite{CGP}. 

\section{Numerical examples}\label{sec:5}
\setcounter{equation}{0}

In this section, we provide some numeric examples. This section is consisted of two parts. In the first part, we provide simulations of complete graphs. We check \eqref{F-11} numerically. In the second part, we consider general graphs instead of complete graphs. For this case, we have Proposition \ref{P3.2}. We will check this proposition again numerically on various discrete graphs.
\subsection{Complete graph}\label{sec:5.1}
In the first part of this subsection we fix parameters of \eqref{F-4-1} as follows:
\[
\Delta t=0.01,\quad \kappa=1,\quad n=4,\quad \rho^0=(0.5, 0.3, 0.15, 0.05).
\]
We will check \eqref{F-11} numerically. First, we consider the case of $\theta_{ij}=\min(\rho_i, \rho_j)$. Since $\rho_1^0$ is the unique maximum of $\{\rho_1^0, \rho_2^0, \rho_3^0, \rho_4^0\}$, $\rho_1(t)$ converges to 1 exponentially. To check the exponential convergence, we draw the graph of $\log(1-\rho_1(t))$. See Figure \ref{Fig1}.\\
\begin{figure}[h]
\includegraphics[width=7.5cm]{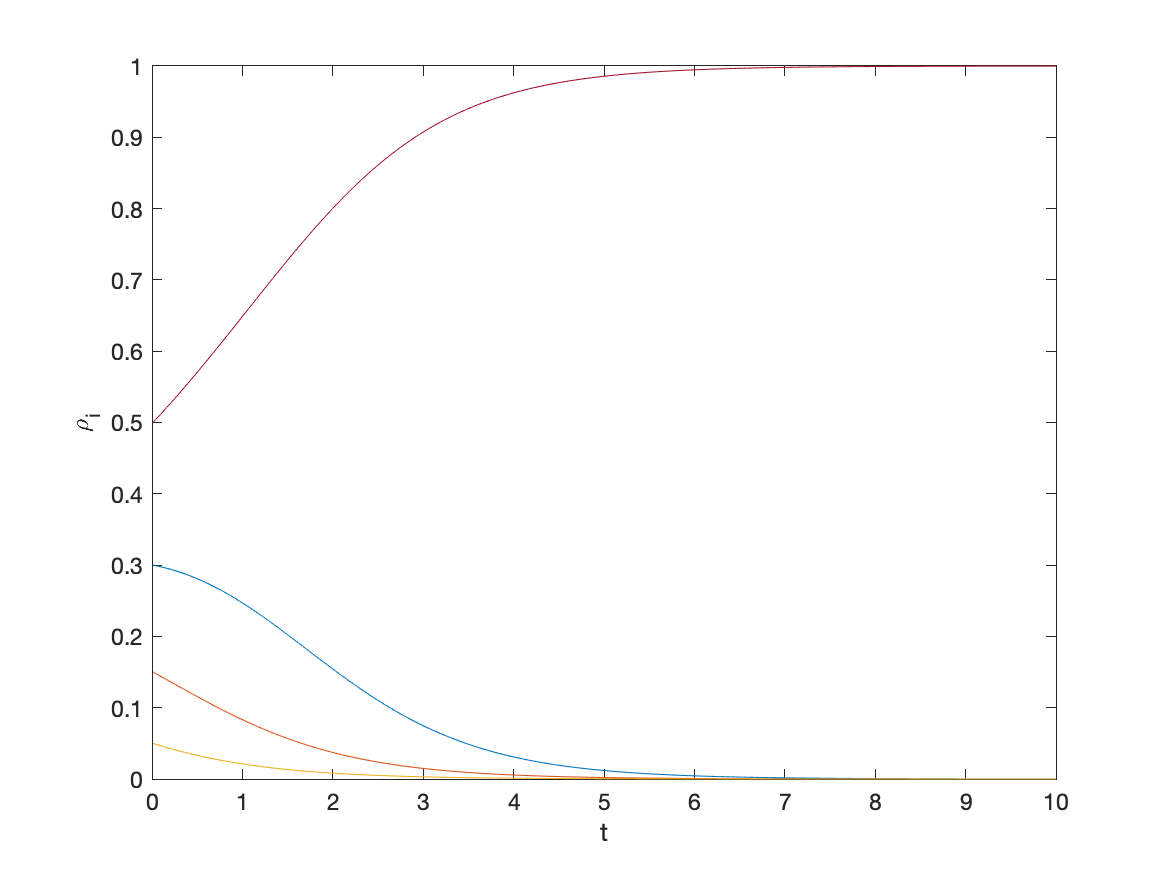}
\includegraphics[width=7.5cm]{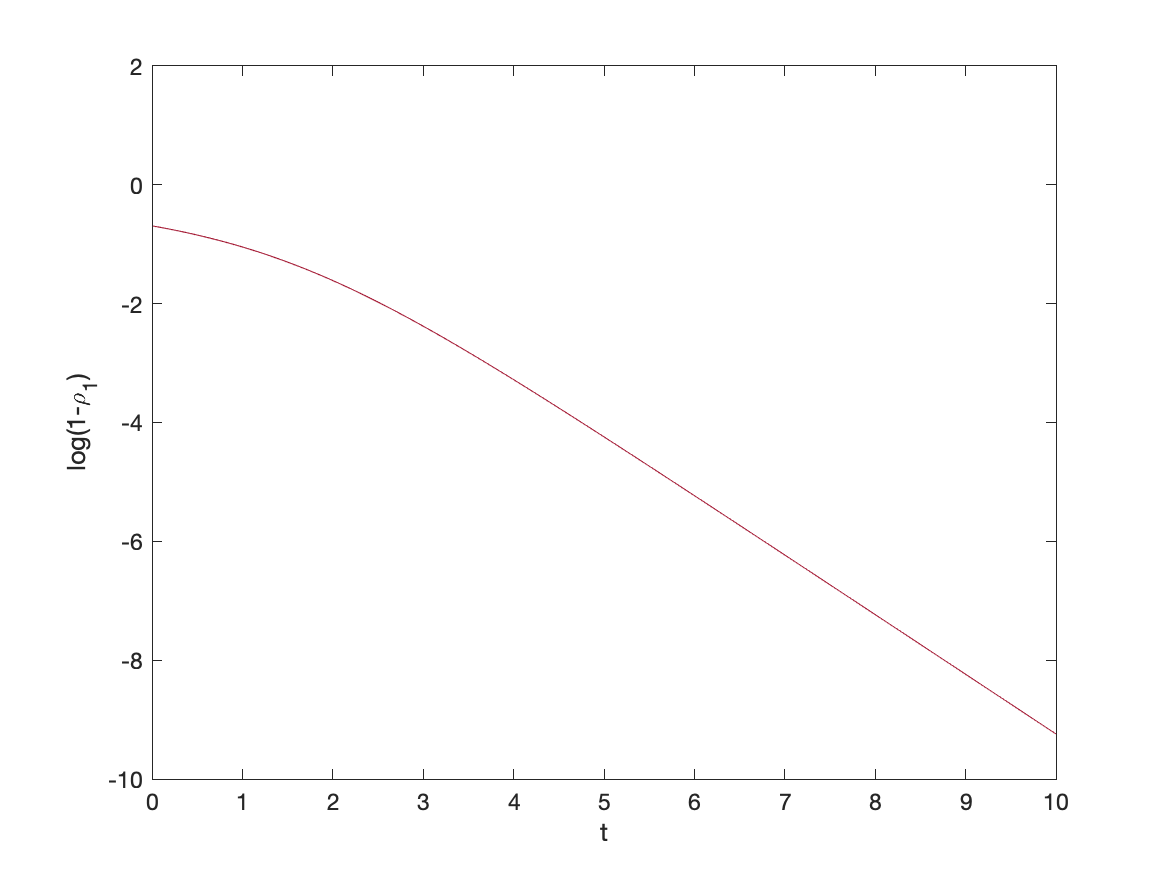}
\caption{The case of $\theta_{ij}=\min(\rho_i, \rho_j)$. As we expected $\log(1-\rho_1(t))$ converges to a line with a negative slope asymptotically as $t\to \infty$.  }\label{Fig1}
\end{figure}

Second, we consider the case of $\theta_{ij}=\min(\rho_i, \rho_j)^2$. We know that $\rho_1(t)$ converges to 1 with the following convergence rate: $1-\rho_1(t)\simeq\frac{1}{t}$. To check this convergence rate, we draw the graph of $(1-\rho_1(t))^{-1}$. See Figure \ref{Fig2}.\\

\begin{figure}[h]
\includegraphics[width=7.5cm]{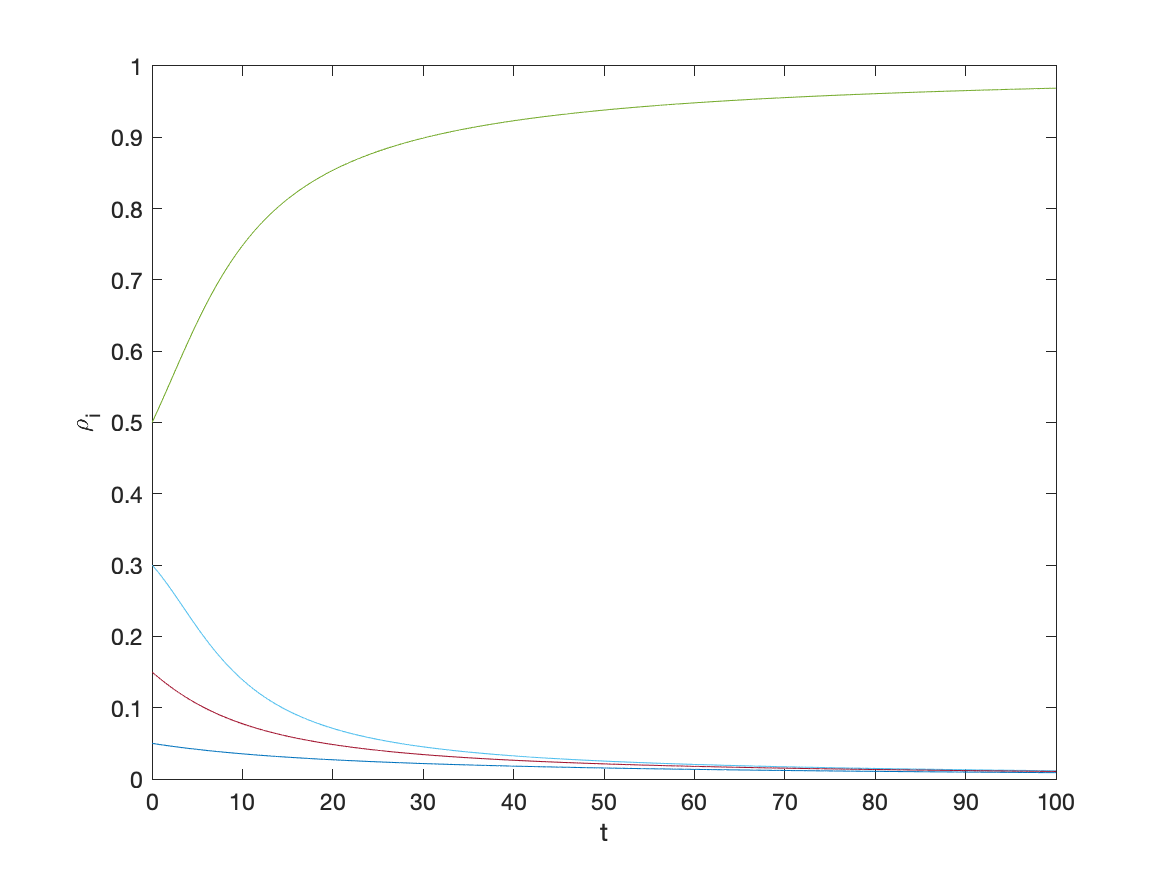}
\includegraphics[width=7.5cm]{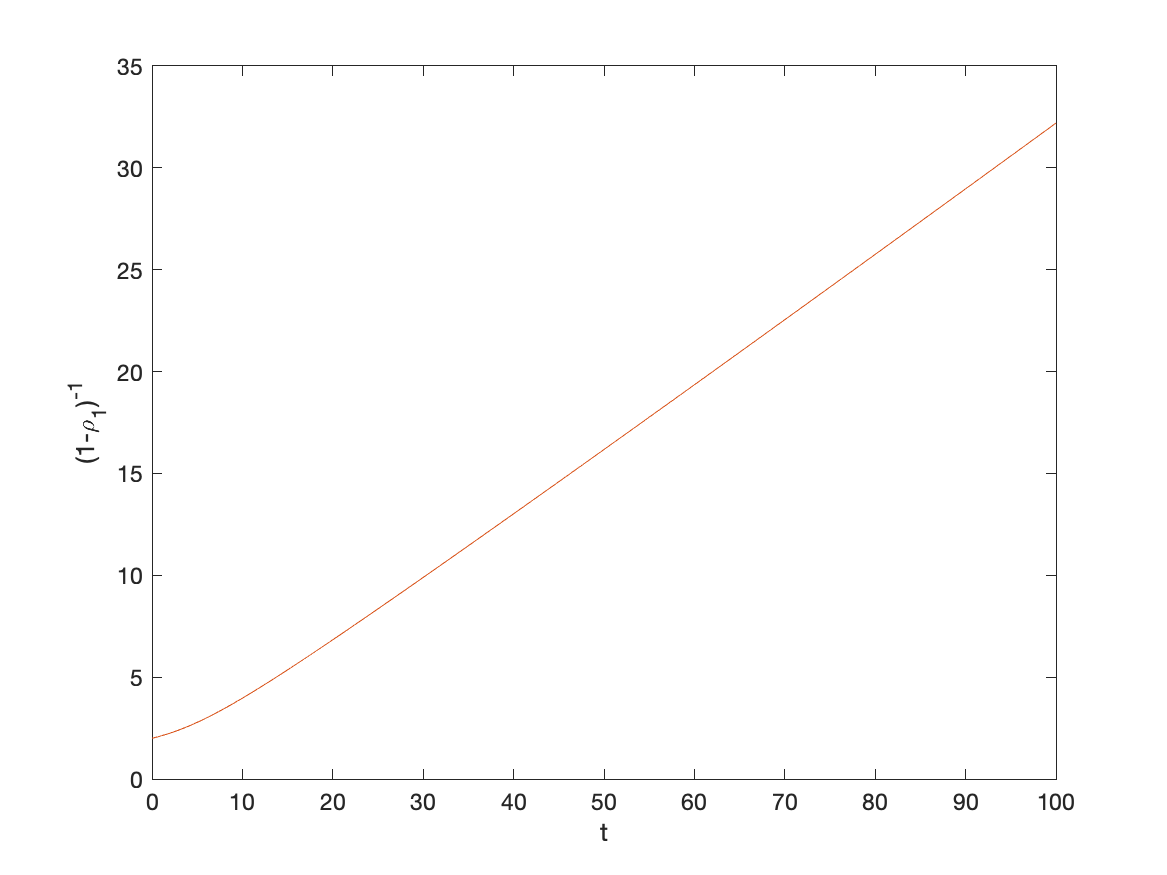}
\caption{The case of $\theta_{ij}=\min(\rho_i, \rho_j)^2$. As we expected the graph of $(1-\rho_1(t))^{-1}$ converges to a line with a positive slope asymptotically as $t\to\infty$. This implies that $\rho_1(t)\simeq \frac{1}{t}$.}\label{Fig2}
\end{figure}

Third, we consider the case of $\theta_{ij}=\min(\rho_i, \rho_j)^3$. We know that $\rho_1(t)$ converges to 1 with the following convergence rate: $1-\rho_1(t)\simeq\frac{1}{\sqrt{t}}$. To check this convergence rate, we draw the graph of $(1-\rho_1(t))^{-2}$. See Figure \ref{Fig3}.

\begin{figure}[h]
\includegraphics[width=7.5cm]{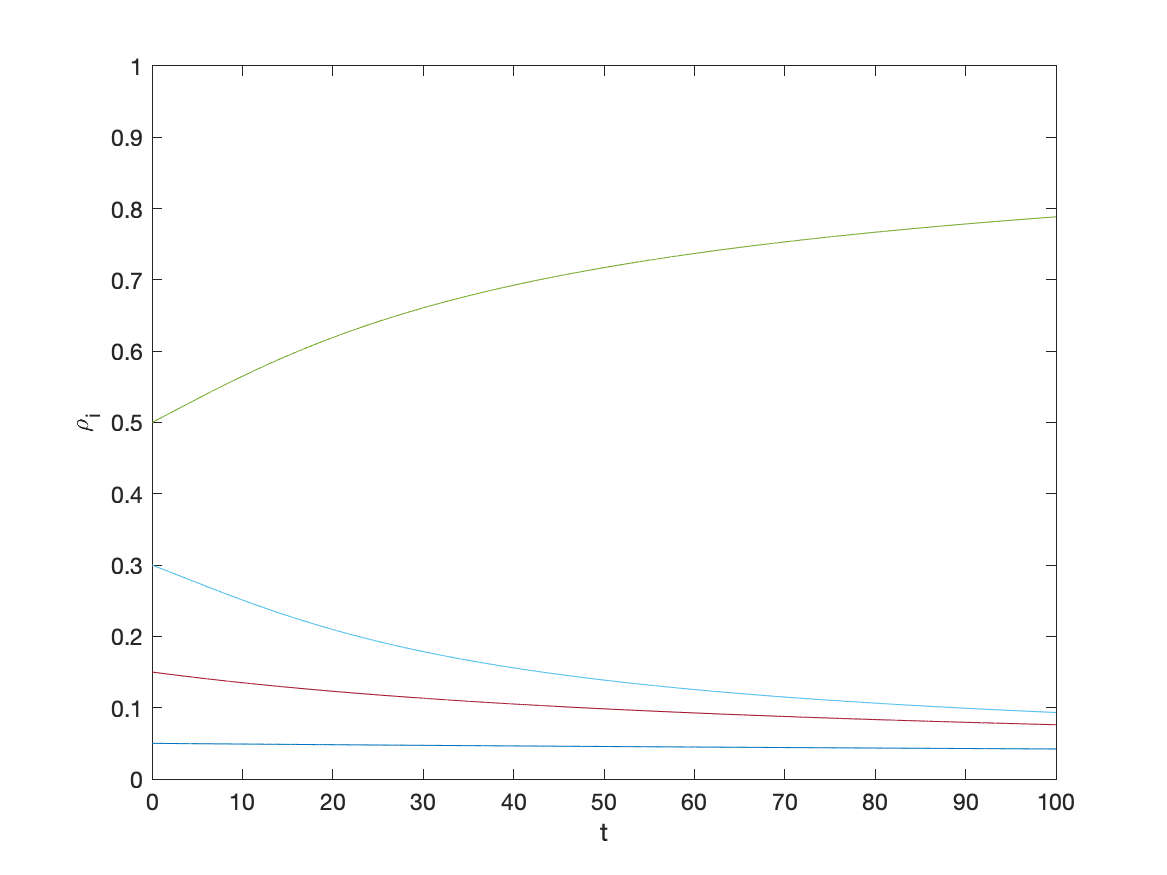}
\includegraphics[width=7.5cm]{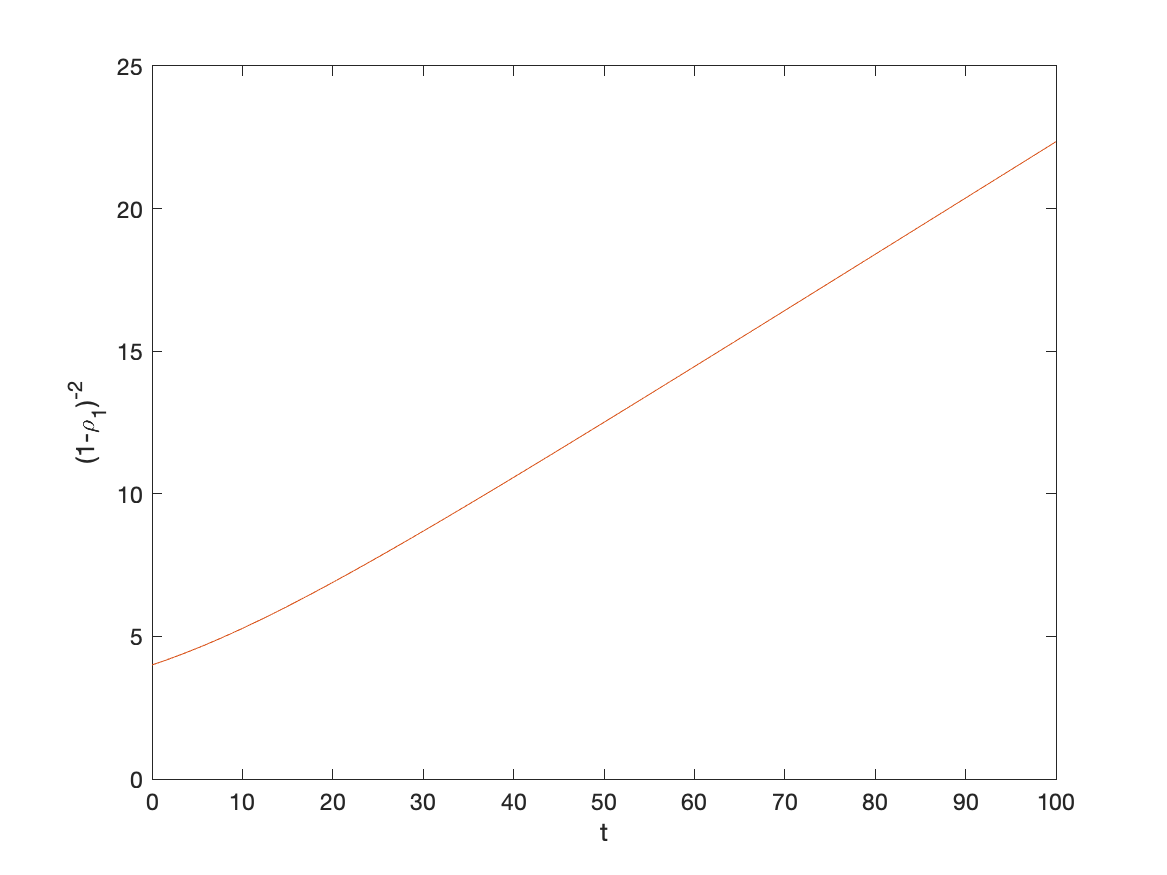}
\caption{The case of $\theta_{ij}=\min(\rho_i, \rho_j)^3$. As we expected the graph of $(1-\rho_1(t))^{-2}$ converges to a line with a positive slope asymptotically as $t\to\infty$. This implies that $\rho_1(t)\simeq \frac{1}{\sqrt{t}}$.}\label{Fig3}
\end{figure}

\subsection{General graph}\label{sec:5.2}
In the second part of this subsection, we provide some numeric examples of system \eqref{F-12} on general graphs. We consider three graphs containing six edges($A$, $B$, $C$, $D$, $E$, and $F$). (1) Cycle graph(Figure \ref{Fig4}), (2) Lattice graph(Figure \ref{Fig5}), and (3) Ribbon shaped graph(Figure \ref{Fig6}). Each graph contains six vertices and they have different topologies. In this part, we fix the following parameters:
\[
\Delta t=0.01,\quad \kappa=1,\quad \theta_{ij}=\min(\rho_i, \rho_j),\quad \rho(0)=(0.3, 0.2, 0.1, 0.1, 0.1 ,0.2),
\]
where $\rho=(\rho_A, \rho_B, \rho_C, \rho_D, \rho_E, \rho_F)$.

\begin{figure}[h]
\begin{tikzpicture} 
\filldraw (2, 0) circle (2pt) node[anchor=west] {A};
\filldraw (1, 1.732) circle (2pt) node[anchor=south]{B};
\filldraw (-1, 1.732) circle (2pt) node[anchor=south] {C};
\filldraw (-2, 0) circle (2pt) node[anchor=east] {D};
\filldraw (-1, -1.732) circle (2pt) node[anchor=north] {E};
\filldraw (1, -1.732) circle (2pt) node[anchor=north] {F};
\draw (2, 0)--(1, 1.732)--(-1, 1.732)--(-2, 0)--(-1, -1.732)--(1, -1.732)--(2, 0);
\end{tikzpicture}
\caption{Cycle graph. }\label{Fig4}
\end{figure}
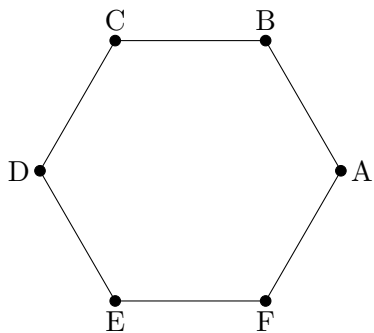

\begin{figure}[h]
\begin{tikzpicture} 
\filldraw (2, 0) circle (2pt) node[anchor=west] {A};
\filldraw (1, 1.732) circle (2pt) node[anchor=south]{B};
\filldraw (-1, 1.732) circle (2pt) node[anchor=south] {C};
\filldraw (-2, 0) circle (2pt) node[anchor=east] {D};
\filldraw (-1, -1.732) circle (2pt) node[anchor=north] {E};
\filldraw (1, -1.732) circle (2pt) node[anchor=north] {F};
\draw (2, 0)--(1, 1.732)--(-1, 1.732)--(-2, 0)--(-1, -1.732)--(1, -1.732);
\end{tikzpicture}
\caption{Lattice graph}\label{Fig5}
\end{figure}
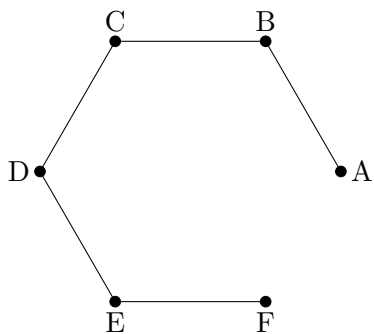

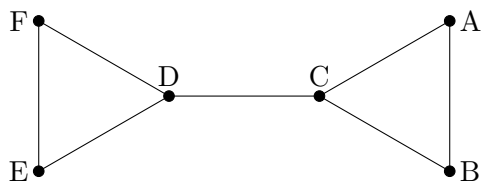
\begin{figure}[h]
\begin{tikzpicture}

\filldraw (1, 0) circle (2pt) node[anchor=south] {C};
\filldraw (1+1.732, 1) circle (2pt) node[anchor=west] {A};
\filldraw (1+1.732, -1) circle (2pt) node[anchor=west] {B};

\filldraw (-1, 0) circle (2pt) node[anchor=south] {D};
\filldraw (-1-1.732, 1) circle (2pt) node[anchor=east] {F};
\filldraw (-1-1.732, -1) circle (2pt) node[anchor=east] {E};

\draw (1, 0)--(1+1.732, -1)--(1+1.732, 1)--(1, 0);
\draw (-1, 0)--(-1-1.732, -1)--(-1-1.732, 1)--(-1, 0);
\draw (-1, 0)--(1, 0);

\end{tikzpicture}
\caption{Ribbon shaped graph}\label{Fig6}
\end{figure}

\begin{example}[Cycle graph: Figure \ref{Fig4}]\label{E4.1}
In this case, the density $\rho(t)$ converges to the following density:
\[
\rho^\infty=(0.7398,\quad    0,\quad    0,\quad   0.2602,\quad    0,\quad   0).
\]
We can observe that $A$ and $D$ are not connected, and the only nonzero part of $\rho^\infty$ are $\rho_A^\infty$ and $\rho_D^\infty$. This result follows Proposition \ref{P3.2}.
\end{example}

\begin{example}[Lattice graph: Figure \ref{Fig5}]\label{E4.2}
If we remove one edge $AF$ from the cycle graph(Figure \ref{Fig4}), then we can obtain the lattice graph(Figure \ref{Fig5}). In this case, the density $\rho(t)$ converges to the following density:
\[
\rho^\infty=(0.5274,\quad    0, \quad   0, \quad   0.1958,\quad    0, \quad   0.2768).
\]
We can observe that $A$, $D$, and $F$ are not connected to each others, and the only nonzero part of $\rho^\infty$ are $\rho_A^\infty$, $\rho_D^\infty$, and $\rho_F^\infty$. This result follows Proposition \ref{P3.2}.
\end{example}

\begin{example}[Ribbon shaped graph: Figure \ref{Fig6}]\label{E4.3}
In this case, the density $\rho(t)$ converges to the following density:
\[
\rho^\infty=(0.5948,\quad    0, \quad   0, \quad   0 ,  \quad 0, \quad   0.4052).
\]
We can observe that $A$ and $F$ are not connected to each others, and the only nonzero part of $\rho^\infty$ are $\rho_A^\infty$ and $\rho_F^\infty$. This result follows Proposition \ref{P3.2}.
\end{example}

\subsection{Second order dynamics with $n\geq3$.}\label{sec:5.3}
In this subsection, we provide numeric results on second order dynamics with $n\geq3$. In this part, we fix the following parameters:
\[
\Delta t=0.01,\quad \kappa=1,\quad n=6,\quad \theta_{ij}=\min(\rho_i, \rho_j)^2.
\]

We consider the initial data:
\begin{align}\label{IN1}
\begin{cases}
\rho^0=(0.3224,~0.2108,~0.1071,~0.0713,~0.2518,~0.0366),\\
S^0=(0.1597,~-1.1129,~0.5929,~0.4568,~0.8299,~-0.2499).
\end{cases}
\end{align}
We present the numeric solution of system \eqref{F-4} with initial data \eqref{IN1} in Figure \ref{Fig7}.
\begin{figure}[h]
\includegraphics[width=7.5cm]{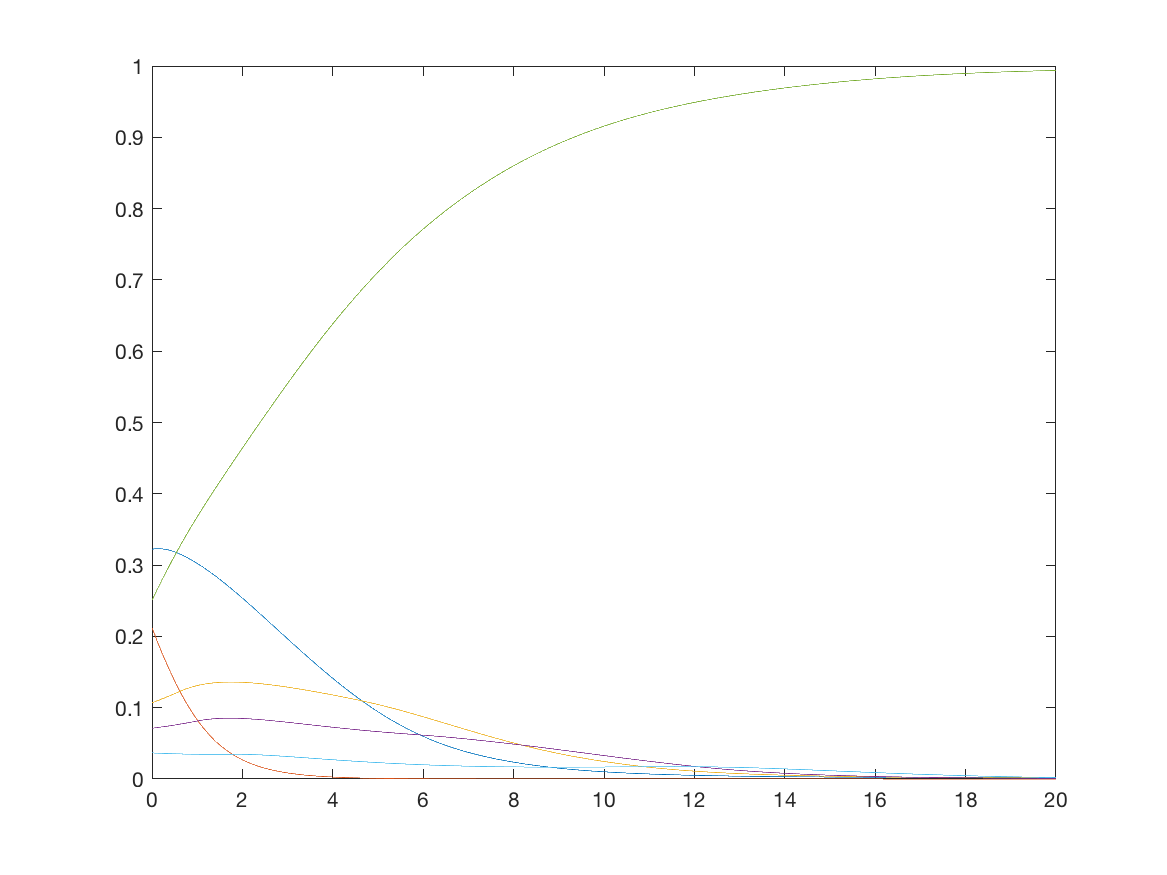}
\includegraphics[width=7.5cm]{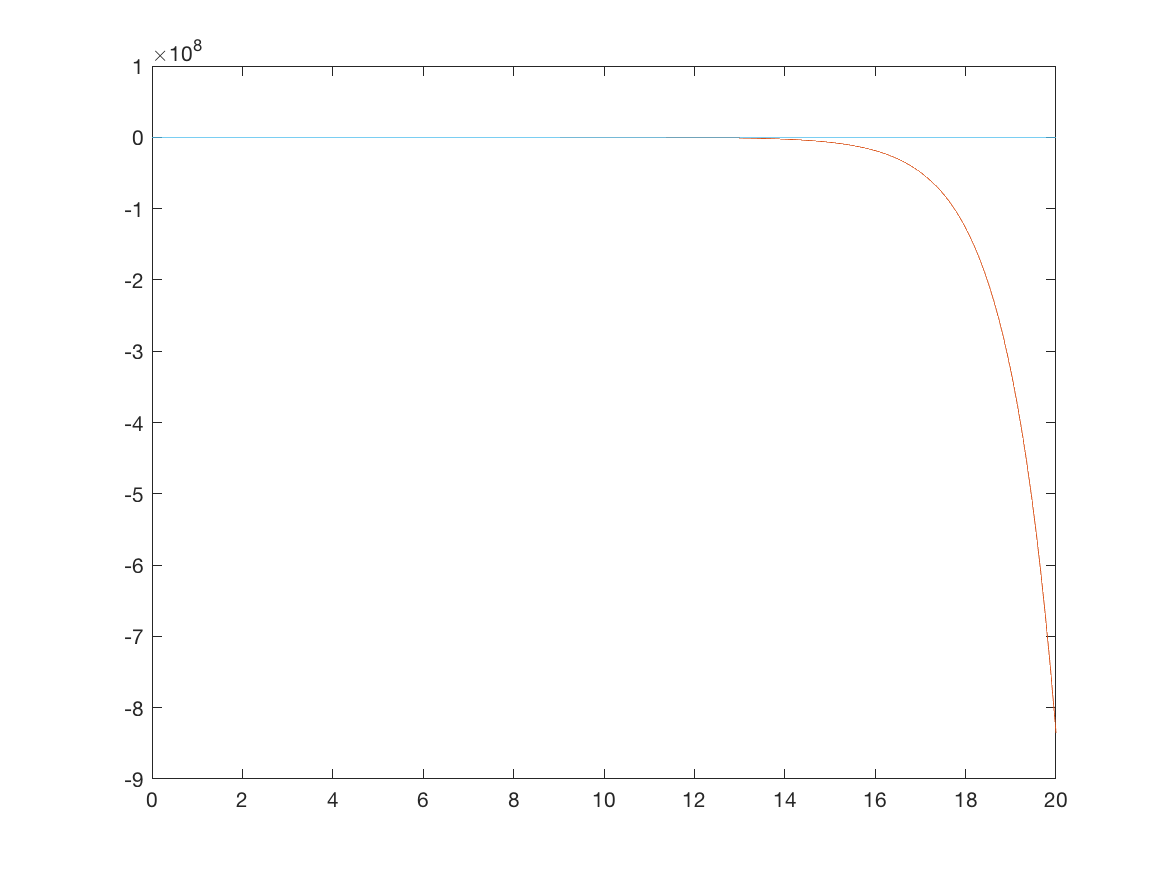}
\caption{Plot of $\rho_i(t)$(Left) and $S_i(t)$(Right) graphs with the initial data \eqref{IN1}. Since only one $\rho_i(t)$ converges to $1$ and others converge to zero, we can observe the complete synchronization on the graph.
}\label{Fig7}
\end{figure}

We also consider the other initial data:
\begin{align}\label{IN2}
\begin{cases}
\rho^0=(0.1524,~0.0910,~0.0698,~0.1583,~0.3424,~0.1862),\\
S^0=(-0.4890,~-0.4542,~-0.2708,~-0.6929,~1.0627,~0.1228).
\end{cases}
\end{align}
We present the numeric solution of system \eqref{F-4} with initial data \eqref{IN2} in Figure \ref{Fig8}.
\begin{figure}[h]
\includegraphics[width=7.5cm]{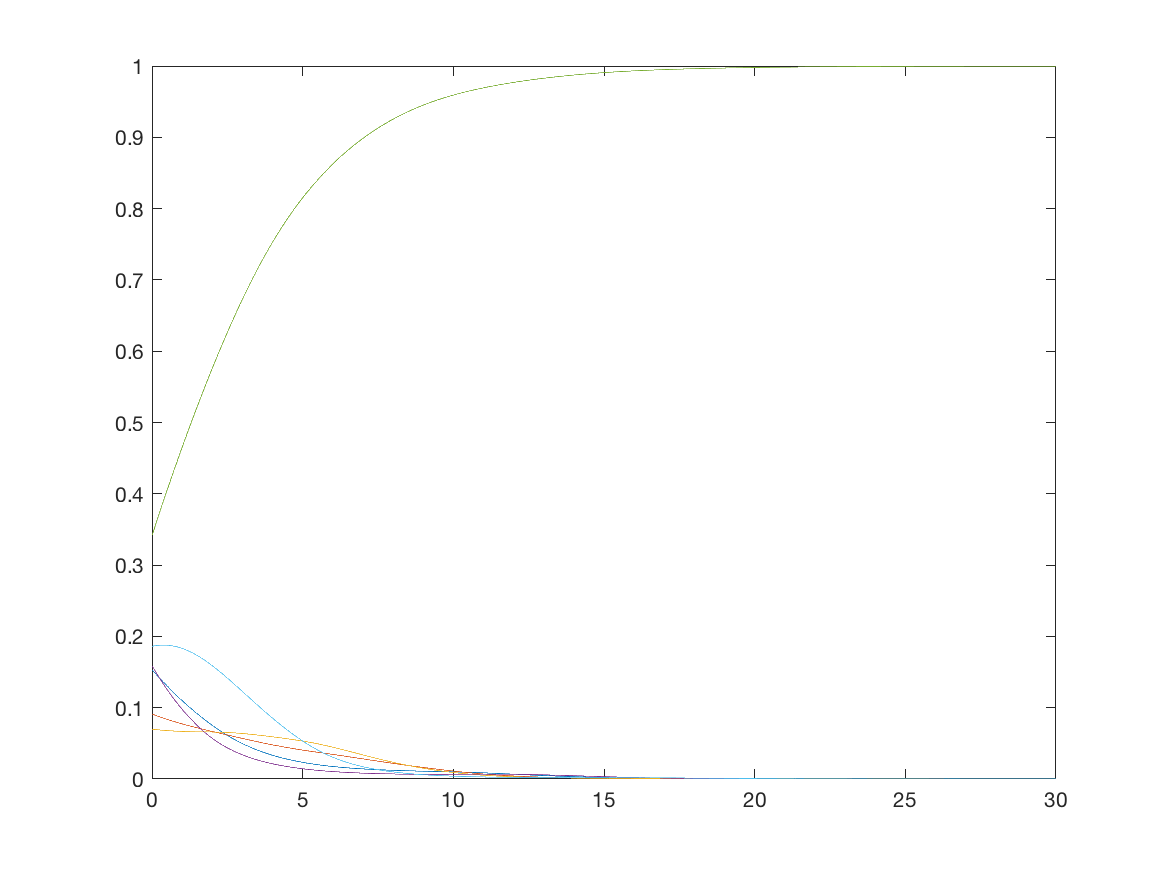}
\includegraphics[width=7.5cm]{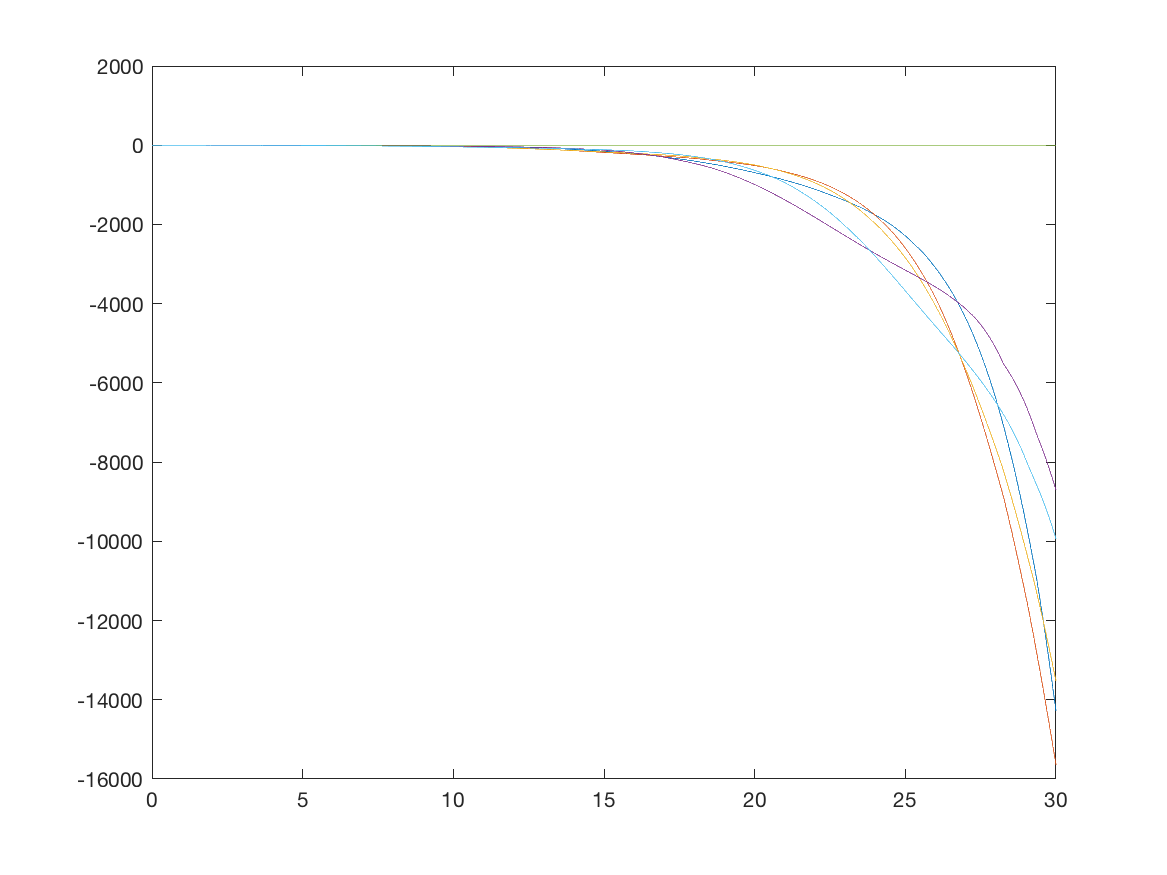}
\caption{Plot of $\rho_i(t)$(Left) and $S_i(t)$(Right) graphs with the initial data \eqref{IN2}. Since only one $\rho_i(t)$ converges to $1$ and others converge to zero, we can observe the complete synchronization on the graph.}\label{Fig8}
\end{figure}

%\section{Discussion}\label{sec:6}
%Throughout this paper, we studied generalized Hopf-Cole transform and the mean field Kuramoto model on discrete graphs. We studied that how to lift a Wasserstein gradient flow system to Hamiltonian flow system, and using this property we could obtain the mean field Kuramoto model on discrete graphs. By using an optimal transport dynamics arguments, we could justify this model. We could check that the emergent behaviors of this system are similar to the original mean field Kuramoto model. For a general potential function, we provide the generalized Hopf-Cole transform on a discrete graphs. These results provide the base of synchronization models on discrete graphs. In Example \ref{R3.2}, we introduce other functionals(R\'{e}nyi and Tsallis entropies). We did not introduce the dynamical system obtained from these functionals, however, analyzing system induced from these functionals can be nice future work.

\appendix
\section{Detail Proofs}
\setcounter{equation}{0}

\begin{lemma}\label{LA.1}
Suppose that the initial data $(x^0, p^0)$ satisfy the following condition:
\begin{align}\label{D-6-1}
p^0=-\nabla U(x^0),
\end{align}
and let $(x, p)$ be a solution to system \eqref{D-6}. Then, we have
\[
p(t)=-\nabla U(x(t))\quad\forall~t\geq0.
\]
\end{lemma}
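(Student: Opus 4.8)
\noindent The plan is to show that the $\mathbb R^d$-valued function $q(t) := p(t) + \nabla U(x(t))$ is identically zero. By the hypothesis \eqref{D-6-1} we have $q(0) = p^0 + \nabla U(x^0) = 0$, so it suffices to prove that $q$ solves a linear homogeneous ODE; uniqueness for the associated Cauchy problem then forces $q \equiv 0$ on the whole interval of existence, which is exactly the assertion $p(t) = -\nabla U(x(t))$.

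First I would differentiate $q$ along the flow \eqref{D-6}. Using $\dot x = p$ and the chain rule, $\dot q = \dot p + \nabla^2 U(x)\,\dot x = \dot p + \nabla^2 U(x)\,p$. The one nontrivial algebraic step is the identity $\tfrac12\,\nabla_x|\nabla U(x)|^2 = \nabla^2 U(x)\,\nabla U(x)$, obtained by differentiating $|\nabla U|^2 = \sum_i (\partial_i U)^2$ componentwise. Substituting the $p$-equation of \eqref{D-6} together with this identity, the two terms recombine into $\dot q = \nabla^2 U(x(t))\big(p + \nabla U(x(t))\big) = \nabla^2 U(x(t))\,q$. Hence $q$ satisfies the linear system $\dot q = A(t)\,q$ with $A(t) = \nabla^2 U(x(t))$ and $q(0) = 0$.

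To conclude I would invoke uniqueness, in any of three equivalent ways. Directly: the zero function solves the same Cauchy problem, and $A(\cdot)$ is continuous wherever $x$ is defined, so Picard--Lindel\"of yields $q \equiv 0$. Via a Gr\"onwall estimate: $\tfrac{d}{dt}|q|^2 = 2\langle q, A(t) q\rangle \le 2\|\nabla^2 U(x(t))\|\,|q|^2$, and since $x(\cdot)$ is continuous the operator norm $\|\nabla^2 U(x(t))\|$ is bounded on compact time intervals, whence $|q(t)|^2 \le |q(0)|^2 \exp\!\big(2\!\int_0^t \|\nabla^2 U(x(s))\|\,ds\big) = 0$. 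Or by constructing the candidate first: let $\bar x$ solve $\dot{\bar x} = -\nabla U(\bar x)$ with $\bar x(0) = x^0$, set $\bar p := -\nabla U(\bar x)$, verify that $(\bar x,\bar p)$ satisfies both \eqref{D-6} and \eqref{D-6-1}, and deduce $(x,p) = (\bar x,\bar p)$ by uniqueness for \eqref{D-6}; this route also makes transparent the reduction of the Hamiltonian system to the gradient flow $\dot x = -\nabla U(x)$ claimed in the text.

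I do not expect a genuine obstacle: the whole content is the identity $\tfrac12\nabla|\nabla U|^2 = \nabla^2 U\,\nabla U$ plus standard ODE uniqueness. The only point needing a little care is regularity — one wants $U \in C^2$ so that the right-hand side of \eqref{D-6} is locally Lipschitz (for existence/uniqueness) and so that $\nabla^2 U$ is continuous along $x(\cdot)$ (for the Gr\"onwall bound) — which is covered by the standing smoothness assumptions. This is the finite-dimensional prototype of Lemma \ref{LA.2}, and the argument transfers verbatim to the Wasserstein setting with $\nabla^2 U$ replaced by the second variation $\delta^2\mathcal F$.
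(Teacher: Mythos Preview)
Your approach is correct and essentially identical to the paper's: both compute $\frac{d}{dt}(p+\nabla U(x)) = \nabla^2 U(x)\,(p+\nabla U(x))$ via the identity $\tfrac12\nabla|\nabla U|^2 = \nabla^2 U\,\nabla U$, then conclude from the zero initial condition. The paper carries out precisely your Gr\"onwall option, bounding $\tfrac12\frac{d}{dt}\|p+\nabla U(x)\|^2$ by $\|\nabla^2 U(x)\|_F\,\|p+\nabla U(x)\|^2$.
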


\begin{proof}
We have the following calculation:
\begin{align*}
\frac{d}{dt}(p+\nabla U(x))_i
&=\frac{d}{dt}p_i+\partial_{ij}U(x)\frac{dx_j}{dt}\\
&=\frac{1}{2}\partial_i|\nabla U(x)|^2+\partial_{ij} U(x) p_j\\
&=\partial_{ij}U(x) \partial_jU(x)+\partial_{ij} U(x) p_j\\
&=\partial_{ij} U(x)(p+\nabla U(x))_j,
\end{align*}
where we use the Einstein's convention. Then, we get the following inequality:
\begin{align*}
\frac{1}{2}\frac{d}{dt}\|p+\nabla U(x)\|^2&=(p+\nabla U(x))_i\partial_{ij} U(x)(p+\nabla U(x))_j\\
&\leq \| \nabla^2 U(x)\|_F\cdot\|p+\nabla U(x)\|^2.
\end{align*}
From the initial condition $\|p(0)+\nabla U(x(0))\|=0$, we have
\[
\|p+\nabla U(x)\|^2=0
\]
for all $t\geq0$.
\end{proof}

\begin{lemma}\label{LA.2}
Let $\rho_t$ be a solution to \eqref{D-1}. If we set
\begin{align}\label{Z-1}
S_t=-\frac{\delta\mathcal{F}(\rho_t)}{\delta\rho_t},
\end{align}
then $(\rho_t, S_t)$ is a solution to \eqref{D-2} and the initial data satisfies \eqref{D-2-1}.
\end{lemma}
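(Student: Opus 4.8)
The plan is to verify \eqref{D-2} directly for the pair $(\rho_t,S_t)$ with $S_t:=-\delta\mathcal{F}(\rho_t)/\delta\rho_t$; the initial relation \eqref{D-2-1} needs no work, being exactly the definition of $S_t$ evaluated at $t=0$. Throughout I would abbreviate $\Phi_t:=\delta\mathcal{F}(\rho_t)/\delta\rho_t=-S_t$, so that $\nabla S_t=-\nabla\Phi_t$ and $|\nabla S_t|^2=|\nabla\Phi_t|^2$.

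The continuity equation (first line of \eqref{D-2}) is immediate: substituting $\nabla S_t=-\nabla\Phi_t$ gives $\mathrm{div}(\rho_t\nabla S_t)=-\mathrm{div}(\rho_t\nabla\Phi_t)=-\partial_t\rho_t$, where the last step is precisely the gradient-flow equation \eqref{D-1}. Hence $\partial_t\rho_t+\mathrm{div}(\rho_t\nabla S_t)=0$.

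The heart of the argument is the Hamilton--Jacobi equation (second line of \eqref{D-2}). Its left-hand side equals $-\partial_t\Phi_t+\frac12|\nabla\Phi_t|^2$, and by the chain rule together with \eqref{D-1},
\[
\partial_t\Phi_t(x)=\int\delta^2\mathcal{F}(\rho_t)(x,y)\,\partial_t\rho_t(y)\,dy=\int\delta^2\mathcal{F}(\rho_t)(x,y)\,\mathrm{div}_y\big(\rho_t(y)\nabla_y\Phi_t(y)\big)\,dy .
\]
For the right-hand side I would set $G(\rho):=\frac12\int|\nabla(\delta\mathcal{F}(\rho)/\delta\rho)|^2\rho\,dx$ and compute its first variation with the product rule, picking up one term from the explicit factor $\rho$ and one from the $\rho$-dependence of $\Phi=\delta\mathcal{F}/\delta\rho$ (whose variation is $\delta^2\mathcal{F}$):
\[
\frac{\delta G(\rho)}{\delta\rho(x)}=\frac12|\nabla\Phi(x)|^2+\int\nabla_y\Phi(y)\cdot\nabla_y\big(\delta^2\mathcal{F}(\rho)(y,x)\big)\rho(y)\,dy .
\]
Integrating the second term by parts in $y$ (no boundary terms on $\bbs^1$, or under suitable decay) turns it into $-\int\delta^2\mathcal{F}(\rho)(y,x)\,\mathrm{div}_y(\rho(y)\nabla_y\Phi(y))\,dy$, which by the symmetry $\delta^2\mathcal{F}(y,x)=\delta^2\mathcal{F}(x,y)$ is exactly $-\partial_t\Phi(x)$. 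Thus $\delta G/\delta\rho=\frac12|\nabla\Phi|^2-\partial_t\Phi$, matching the left-hand side, so the second equation of \eqref{D-2} holds and the lemma follows.

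The one delicate step is the variational identity for $G$: I must track both sources of $\rho$-dependence, invoke the symmetry of the second variation $\delta^2\mathcal{F}$, and justify the integration by parts; on $\bbs^1$ or a closed manifold this is routine. As an alternative route, mirroring the proof of Lemma \ref{LA.1}, one could instead run the Hamiltonian flow \eqref{D-2} from data satisfying \eqref{D-2-1} and show that $w_t:=S_t+\delta\mathcal{F}(\rho_t)/\delta\rho_t$ solves a linear evolution equation, so that $w_0=0$ propagates to $w_t\equiv0$ and \eqref{D-2} reduces to \eqref{D-1}; for the statement as phrased, however, the direct verification above is the most economical.
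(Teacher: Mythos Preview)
Your proposal is correct and follows essentially the same route as the paper: both verify the continuity equation by direct substitution and then compute $\partial_t S_t$ via the chain rule with $\delta^2\mathcal{F}$, match it against the first variation of $G(\rho)=\tfrac12\int|\nabla(\delta\mathcal{F}/\delta\rho)|^2\rho\,dx$ using the product rule, integration by parts, and the symmetry of $\delta^2\mathcal{F}$. The only difference is organizational---the paper manipulates $\partial_t S_t$ forward into the form $\delta G/\delta\rho-\tfrac12|\nabla S_t|^2$, whereas you compute both sides separately and compare---but the underlying calculation is identical.
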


\begin{proof}
Let $\rho_t$ be a solution to \eqref{D-1}. From the substitution \eqref{Z-1} and \eqref{D-1}, we get
\[
\partial_t\rho_t+\mathrm{div}(\rho_t\nabla S_t)=0.
\]
So we have the first equation of \eqref{D-2}. By direct calculations, we also have
\begin{align*}
\partial_tS_t(x)&=-\int\frac{\delta^2\mathcal{F}(\rho_t)}{\delta\rho_t^2}(x, y)\partial_t\rho_t(y)dy
=-\int\frac{\delta^2\mathcal{F}(\rho_t)}{\delta\rho_t^2}(x, y)\mathrm{div}\left(\rho_t\nabla\left(\frac{\delta\mathcal{F}(\rho_t)}{\delta\rho_t}\right)\right)(y)dy\\
&=\int\rho_t(y)\frac{\delta}{\delta\rho_t(x)}\left|\nabla\left(\frac{\delta\mathcal{F}(\rho_t)}{\delta\rho_t}\right)(y) \right|^2dy\\
&= \frac{\delta}{\delta\rho_t(x)}\left(\int\rho_t\left|\nabla\left(\frac{\delta\mathcal{F}(\rho_t)}{\delta\rho_t}\right) \right|^2dy\right)-\left|\nabla\left(\frac{\delta\mathcal{F}(\rho_t)}{\delta\rho_t}\right) (x)\right|^2\\
&=\frac{\delta}{\delta\rho_t(x)}\left(\frac{1}{2}\int |\nabla S_t|^2\rho_t dy\right)-\frac{1}{2}|\nabla S_t(x)|^2.
\end{align*}
Finally, we have
\[
\partial_t S_t+\frac{1}{2}|\nabla S_t|^2=\frac{\delta}{\delta\rho_t}\left(\frac{1}{2}\int|\nabla S_t|^2\rho_tdx\right).
\]
This implies that
\[
\partial_t S_t+\frac{1}{2}|\nabla S_t|^2=\frac{\delta}{\delta\rho_t}\left(\frac{1}{2}\int\left|\nabla \left(\frac{\delta\mathcal{F}(\rho_t)}{\delta\rho_t}\right)\right|^2\rho_t(x)dx\right).
\]
Finally, the induced Wasserstein Hamiltonian flow from system \eqref{D-1} can be written as follows:
\begin{align*}
\begin{cases}
\partial_t\rho_t+\mathrm{div}(\rho_t\nabla S_t)=0,\\
\displaystyle\partial_t S_t+\frac{1}{2}|\nabla S_t|^2=\frac{\delta}{\delta\rho_t}\left(\frac{1}{2}\int\left|\nabla \left(\frac{\delta\mathcal{F}(\rho_t)}{\delta\rho_t}\right)\right|^2\rho_t(x)dx\right).
\end{cases}
\end{align*}
This is the desired result.
\end{proof}

\begin{lemma}\label{LA.3}
Suppose that the initial condition $(\xi_0, \xi^*_0)$ satisfies
\[
\xi_0(x)\equiv0 \quad\forall~ x\in M,
\]
and a pair of $C^2$ function $(\xi, \xi^*)$ is a solution to system \eqref{D-3-1}. Then, $\xi_t(x)$ is identically zero for all $t\geq0$ and $x\in M$.
\end{lemma}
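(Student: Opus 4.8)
The plan is to imitate the proof of Lemma \ref{L3.1}: the first equation of system \eqref{D-3-1} is \emph{linear and homogeneous} in $\xi$ once $\rho_t$ and $\xi^*_t$ are regarded as prescribed (time-dependent) coefficients supplied by the given solution. Indeed, both the transport term $\nabla\xi\cdot\nabla\xi^*$ and the nonlocal term $-\int[\delta^2\mathcal{F}](x,u)\,\nabla\cdot(\rho(u)\nabla\xi(u))\,du$ are linear in $\xi$ with no zeroth-order source, so the $\xi$-equation has the form $\partial_t\xi_t=\mathcal{L}_t\xi_t$ for a linear time-dependent operator $\mathcal{L}_t$. The function $\xi\equiv 0$ solves this equation and matches the hypothesis $\xi_0\equiv 0$, so it suffices to prove uniqueness for the Cauchy problem $\partial_t\xi_t=\mathcal{L}_t\xi_t$, $\xi_0\equiv 0$.

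For uniqueness I would run a Gr\"onwall/energy argument. Set $m(t):=\tfrac12\int_M \xi_t(x)^2\,dx$ (or, if more convenient, the $\rho_t$-weighted version $\tfrac12\int_M \xi_t^2\,\rho_t\,dx$). Differentiating and using the equation,
\begin{align*}
\frac{d}{dt}m(t)=\int_M \xi_t\,\big(\nabla\xi_t\cdot\nabla\xi_t^*\big)\,dx-\int_M\!\!\int_M \xi_t(x)\,[\delta^2\mathcal{F}](x,u)\,\nabla\cdot(\rho_t(u)\nabla\xi_t(u))\,du\,dx.
\end{align*}
The first term equals $\tfrac12\int_M \nabla(\xi_t^2)\cdot\nabla\xi_t^*\,dx=-\tfrac12\int_M \xi_t^2\,\Delta\xi_t^*\,dx\le \|\Delta\xi_t^*\|_{L^\infty}\,m(t)$, which is finite since $\xi^*$ is $C^2$ on $M$. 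For the second term I would introduce the kernel operator $(\mathcal{K}_t\xi_t)(u):=\int_M[\delta^2\mathcal{F}](x,u)\,\xi_t(x)\,dx$ and integrate by parts in $u$ to rewrite it as $\int_M \nabla(\mathcal{K}_t\xi_t)(u)\cdot\rho_t(u)\nabla\xi_t(u)\,du$; using boundedness/regularity of $\delta^2\mathcal{F}$ and of $\rho_t$ together with Cauchy--Schwarz, this is controlled by $C(t)\,m(t)$, or---if $\delta^2\mathcal{F}$ is nonnegative in the appropriate sense---simply absorbed. In either case one obtains $\tfrac{d}{dt}m(t)\le C(t)\,m(t)$ with $C\in L^1_{\mathrm{loc}}$, so Gr\"onwall and $m(0)=0$ force $m(t)\equiv 0$, i.e.\ $\xi_t\equiv 0$.

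The main obstacle is precisely this nonlocal second-order term in the energy identity: unlike the discrete case, where the analogous expression is dispatched by plain linear-ODE uniqueness, here one must commute the Hessian kernel $\delta^2\mathcal{F}$ past the weighted Laplacian $\nabla\cdot(\rho_t\nabla\,\cdot\,)$ and bound the result in $L^2$. This is where standing hypotheses on $\mathcal{F}$ (e.g.\ $\delta^2\mathcal{F}$ bounded with bounded derivatives, or nonnegative) and on the regularity of $\rho_t$ (which itself follows from $\rho_t$ solving the first equation of \eqref{D-3-1}) enter; with those in hand the estimate is routine. Finally, as in Lemma \ref{L3.1}, once $\xi_t\equiv 0$ the second equation of \eqref{D-3-1} collapses to the closed evolution $\partial_t\xi_t^*(x)=\int_M[\delta^2\mathcal{F}](x,u)\,\nabla\cdot(\rho_t(u)\nabla\xi_t^*(u))\,du$, the continuous counterpart of the reduced $\xi^*$-dynamics recorded there.
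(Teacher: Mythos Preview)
Your approach is essentially the paper's: define the $L^2$ energy $\mathcal{A}(t)=\int_M\xi_t^2\,dx$, differentiate, handle the transport term by $\int\nabla(\xi_t^2)\cdot\nabla\xi_t^*=-\int\xi_t^2\,\Delta\xi_t^*$, and close with Gr\"onwall. The one place where you stop short is the nonlocal term. After your single integration by parts you are left with $\int_M\nabla(\mathcal{K}_t\xi_t)\cdot\rho_t\nabla\xi_t\,du$, and Cauchy--Schwarz here produces $\|\nabla\xi_t\|_{L^2}$, which is \emph{not} controlled by $m(t)$; so the claimed bound $C(t)m(t)$ does not follow from what you wrote.

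The paper's fix is simply to integrate by parts in $u$ a \emph{second} time, moving both derivatives off $\xi_t(u)$ and onto the kernel:
\[
\mathcal{I}_2=\iint_{M\times M}\xi_t(x)\,\xi_t(u)\,\nabla_u\!\cdot\!\big(\rho_t(u)\nabla_u[\delta^2\mathcal{F}](x,u)\big)\,dx\,du,
\]
whence $|\mathcal{I}_2|\le \big(\sup_{x,u}|\nabla_u\!\cdot\!(\rho_t\nabla_u[\delta^2\mathcal{F}])|\big)\,\mathrm{Vol}(M)\,\mathcal{A}(t)$ by Cauchy--Schwarz in each factor. This closes the estimate in $L^2$ without any control on $\nabla\xi_t$, and is exactly the ``commuting the Hessian kernel past the weighted Laplacian'' you flagged as the obstacle. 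With this one extra integration by parts your proof coincides with the paper's.
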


\begin{proof}
First, we define the following functional:
\[
\mathcal{A}(t)=\int \xi_t(x)^2dx.
\]
From the previous result \eqref{D-3-1}, we have the following calculation:
\begin{align*}
\partial_t\mathcal{A}&=2\int \xi_t(x)\nabla\xi_t(x)\cdot\nabla\xi^*(x)dx-\iint \xi_t(x)[\delta^2\mathcal{F}](x, u)\nabla_u\cdot(\rho_t(u)\nabla_u\xi_t(u))dxdu\\
&=:\mathcal{I}_1+\mathcal{I}_2.
\end{align*}
Now, we estimate each terms for $0\leq t\leq T$. First, we estimate $\mathcal{I}_1$:
\begin{align*}
|\mathcal{I}_1|&=\left|\int \nabla \xi_t(x)^2\cdot\nabla\xi_t^*(x)dx\right|\\
&=\left|\int \xi_t(x)^2\Delta\xi_t^*(x)dx\right|\leq \left(\sup_{t\in[0, T], x\in M}|\Delta\xi_t^*(x)|\right)\mathcal{A}(t).
\end{align*}
Second, we estimate $\mathcal{I}_2$:
\begin{align*}
|\mathcal{I}_2|&=\left|\iint \xi_t(x)\xi_t(u) \nabla_u\cdot(\rho_t(u)\nabla_u[\delta^2\mathcal{F}](x, u))dxdu\right|\\
&\leq \left(\sup_{t\in[0, T], x, u\in M}| \nabla_u\cdot(\rho_t(u)\nabla_u[\delta^2\mathcal{F}](x, u))|\right) \mathcal{A}(t)\mathrm{Vol}(M).
\end{align*}
From the above estimates, for any $T>0$, we can find a fixed positive constant $C_T$ which satisfies the follows:
\[
\partial_t\mathcal{A}(t)\leq C_T\mathcal{A}(t)\quad\forall 0\leq t\leq T.
\]
This property yields
\[
\mathcal{A}(t)\leq \mathcal{A}(0)e^{C_Tt}\quad\forall~0\leq t\leq T.
\]
From the initial condition $\mathcal{A}(0)=0$, we get $\mathcal{A}(t)=0$ for all $0\leq t\leq T$. Since this argument holds for all $T>0$, we can prove that $\mathcal{A}(t)=0$ for all $t\geq0$. This yields $\xi_t(x)\equiv 0$ for all $x\in M$ and $t\geq0$.
\end{proof}

\end{document}